\documentclass[final,onefignum,onetabnum]{siamart171218}

\usepackage{lipsum}
\usepackage{amsfonts}
\usepackage{amsmath}
\usepackage{mathrsfs}
\usepackage{amssymb}
\usepackage{booktabs}
\usepackage{graphicx}
\usepackage{epstopdf}
\usepackage{algorithmic}
\usepackage{mathtools}
\ifpdf
  \DeclareGraphicsExtensions{.eps,.pdf,.png,.jpg}
\else
  \DeclareGraphicsExtensions{.eps}
\fi

\definecolor{HsL}{RGB}{255,0,255}

\newcommand{\stf}{\mathrm{stf}}
\newcommand{\Kn}{\mathrm{Kn}}
\newcommand{\dif}{\mathrm{d}}
\newcommand{\pdfFrac}[2]{\frac{\partial #1}{\partial #2}}
\newcommand{\innerprod}[2]{\left\langle#1,#2\right\rangle}

\newcommand{\odd}{\mathrm{odd}}
\newcommand{\even}{\mathrm{even}}

\newcommand{\ext}{\mathrm{ext}}
\newcommand{\bm}[1]{\boldsymbol{#1}}
\newcommand{\bcal}[1]{\boldsymbol{\mathcal{#1}}}

\newcommand{\qbarb}{\boldsymbol{\bar{s}}}

\newcommand{\qb}{\boldsymbol{s}}
\newcommand{\sigmabarb}{\boldsymbol{\bar{\sigma}}}
\newcommand{\sigmab}{\boldsymbol{\sigma}}

\newcommand\bxi{\boldsymbol{\xi}}
\newcommand{\bbm}[1]{\bar{\boldsymbol{#1}}}
\newsiamremark{remark}{Remark}
\newsiamremark{hypothesis}{Hypothesis}
\crefname{hypothesis}{Hypothesis}{Hypotheses}
\newsiamthm{claim}{Claim}

\headers{Wellposedness for R13}{Shuang Hu, Bo Lin, Huini Liu and Zhenning Cai}
\title{Global Well-Posedness of the Linearized R13 Moment Equations with Onsager Boundary Conditions\thanks{Submitted to the editors DATE.
\funding{The work of Zhenning Cai was supported by the Academic Research Fund of the Ministry of Education of Singapore under Grant No. A-8002392-00-00.}}}

\author{Shuang Hu\thanks{
Zhejiang University, 866 Yuhangtang Road, Hangzhou, Zhejiang Province, 310058 China
  (\email{hsmathna@gmail.com}).}
\and Bo Lin\thanks{Beijing Huairou Laboratory, 8 Yangyan East 1st Road, Beijing 101400, China.
  (\email{linbo@neps.hrl.ac.cn}).}
\and Huini Liu\thanks{Hengyang Normal University, 16 Henghua Road, Hengyang, Hunan Province, 421002, China.  
  (\email{liuhuini@smail.xtu.edu.cn}).}  
\and Zhenning Cai\thanks{ Corresponding author. 
Department of Mathematics, National University of Singapore,
Level 4, Block S17, 10 Lower Kent Ridge Road, Singapore 119076  (\email{matcz@nus.edu.sg}).}}

\usepackage{amsopn}

\makeatletter
\newcommand*{\addFileDependency}[1]{
  \typeout{(#1)}
  \@addtofilelist{#1}
  \IfFileExists{#1}{}{\typeout{No file #1.}}
}
\makeatother

\ifpdf
\hypersetup{
  pdftitle={Global Well-Posedness of the Linearized R13 Moment Equations with Onsager Boundary Conditions},
  pdfauthor={Shuang Hu, B. Lin}
}
\fi

\begin{document}

\maketitle

\begin{abstract}
This paper establishes the global well-posedness of the linearized regularized 13-moment (R13) equations for rarefied gas flows. We first derive an entropy inequality for the system on bounded domains subject to Onsager wall boundary conditions. For the steady-state problem, well-posedness is proved via the Ladyzhenskaya–Babuška–Brezzi (LBB) theorem, facilitated by novel boundary-related Korn-type inequalities. Furthermore, leveraging the Lumer-Phillips theorem, we extend these results to guarantee the global well-posedness of the time-dependent R13 equations. Our theoretical framework uniformly accommodates the models for both Maxwell and general non-Maxwell molecules.
\end{abstract}

\begin{keywords}
Regularized 13-moment equations, global well-posedness, Onsager boundary conditions, entropy inequality, saddle point system, semigroup theory.
\end{keywords}

\begin{AMS}
76P05, 82C40, 35Q20, 47D06 
\end{AMS}

\section{Introduction} 
Rarefied gas flows are characterized by non-equilibrium effects that emerge when the molecular mean free path is comparable to the characteristic length scale of the system, a regime parameterized by the Knudsen number $\Kn$. 
For flows in this regime, the classical Navier--Stokes--Fourier system might fail to provide an accurate description, while the Boltzmann equation, the fundamental equation of gas kinetic theory, offers a complete statistical description via the molecular velocity distribution function. Although significant advances have been made in the development of robust numerical solvers for the Boltzmann equation \cite{gamba2026cmame,pareschi2026sinum,lin2024sinum,pinn2025sisc,yin2026sisc,wu2024cnf}, its intrinsic high-dimensional phase-space structure makes the direct numerical solution computationally expensive for practical applications. 
Moment methods provide an efficient bridge between these two descriptions by projecting the Boltzmann equation onto a finite set of macroscopic variables, 
leading to closed systems of continuum-type partial differential equations (PDEs) that retain essential non-equilibrium kinetic information, while avoiding the curse of dimensionality associated with solving the full Boltzmann equation directly \cite{Struchtrup2005macroscopic}.

The moment method was pioneered by Grad \cite{grad1949kinetic}, who introduced finite dimensional systems involving macroscopic fields such as the density $\rho$, velocity $\bm{u}$, temperature $\theta$, stress tensor $\bm{\sigma}$, and heat flux $\bm{s}$. However, Grad's original moment systems suffer from well-known stability and hyperbolicity difficulties. To improve the physical and mathematical behavior of the 13-moment closure, Struchtrup and Torrilhon derived the regularized 13-moment (R13) equations \cite{struchtrup2003regularization}. For boundary value problems, thermodynamically admissible Onsager boundary conditions were later formulated by Rana and Struchtrup \cite{RanaStruchtrup2016}, and their connection with stability properties of moment systems was further clarified by Torrilhon and Sarna \cite{TorrilhonSarna2017}.
Recent works have further derived linearized R13 models equipped with stable Onsager boundary conditions for both Maxwell and non-Maxwell molecules \cite{cai2024linear,lin2025time}, as well as evaporation boundary conditions \cite{BeckmannRanaTorrilhonStruchtrup2018}.

Previously, the well-posedness theory for linearized R13 moment systems had remained relatively underdeveloped, which limited the rigorous analysis of related numerical schemes. Existing numerical studies include its discontinuous Galerkin discretization \cite{TorrilhonSarna2017}, the continuous interior penalty finite element methods for the steady linear R13 system \cite{westerkamp2019finite,Torrilhon2021FEniCS}, and fundamental solution techniques \cite{Himanshi2025generalized}. In the absence of a well-established continuous theory, convergence analysis for these schemes was difficult, and practical implementations often relied on problem-dependent stabilization terms. A recent work of Lewintan et al. \cite{lewintan2025wellposedness} addressed this issue for the R13 model studied in \cite{Torrilhon2021FEniCS} by combining the LBB theorem \cite{boffi2013mixed} with generalized Korn-type inequalities \cite{ciarlet2010korn,dain2006generalized,gmeineder2024limiting}. This result clarifies the saddle-point structure of the linearized R13 equations and provides a useful analytical basis for subsequent numerical analysis. Building on this perspective, Hu et al. \cite{hu2026stabilityconvergencemixedfinite} developed a stable mixed finite element method without continuous interior penalty terms.

The above developments, however, leave several aspects of the well-posedness theory unresolved. The functional setting in \cite{lewintan2025wellposedness}, while effective for the continuous problem, imposes strong regularity requirements on some variables and may lead to mismatched convergence rates in finite element discretizations \cite{hu2026stabilityconvergencemixedfinite}. Moreover, the analysis there is restricted to Maxwell molecules and to the steady-state problem. A unified treatment covering more general intermolecular potentials and the time-dependent R13 system is therefore still needed.

To address these issues, we establish a unified well-posedness theory for the linearized R13 system with Onsager boundary conditions, covering both Maxwell and non-Maxwell molecules. Our main contributions are as follows:
\begin{itemize}
    \item We derive an entropy inequality on bounded domains, showing that the linearized R13 system with Onsager boundary conditions is consistent with the second law of thermodynamics.

    \item We show the well-posedness of the steady-state problem. The non-Maxwell case is handled through boundary-related Korn-type inequalities and the LBB framework. In contrast, the Maxwell case requires a separate functional setting because the non-Maxwell coercivity mechanism is degenerate.

    \item We prove the global well-posedness of the time-dependent R13 system with homogeneous Onsager boundary data via semigroup theory and the Lumer--Phillips theorem, and briefly discuss the extension to non-homogeneous \\
    boundary data through a standard lifting argument.
\end{itemize}

The present well-posedness theory provides a useful continuous foundation for the future design and analysis of numerical schemes for the R13 system. The saddle-point formulations and coercivity estimates clarify the stability mechanisms that should be preserved in discretizations. In addition, the connection between the Maxwell and non-Maxwell cases suggests a possible direction for studying the asymptotic behavior of the general model toward the Maxwell regime.

The rest of the paper is organized as follows. The following subsection \ref{subsec:Notation} introduces the notation used throughout the paper. Section \ref{Sec:LR13} presents the linearized R13 system, from its distribution-function formulation of the moment equations. Section \ref{Sec:Entropy} derives entropy inequalities on bounded domains. Sections \ref{Sec:Steady_State} and \ref{Sec:TimeDependent} establish the well-posedness of the steady-state and time-dependent problems, respectively. Finally, Section \ref{Sec:Conclusion} concludes the paper and discusses future directions.
\subsection{Notations}
\label{subsec:Notation}
Let $\Omega \subset \mathbb{R}^3$ be a bounded Lipschitz domain with boundary $\Gamma$. We use standard Sobolev spaces $H^m(\Omega; \mathbb{X})$ for functions taking values in a finite-dimensional space $\mathbb{X}$ (scalars, vectors, or tensors), equipped with the norm $\|\cdot\|_{m,\Omega}$. The case $m=0$ recovers $L^2(\Omega; \mathbb{X})$ space, where its standard inner product is denoted by $(\cdot, \cdot)$. For functions with well-defined $L^2$ traces, the boundary inner product on $\Gamma$ is denoted by $(\cdot, \cdot)_b$. We use $\cdot$ and $:$ for the scalar products of vectors and second-order tensors, respectively. The notation $A \lesssim B$ (resp., $A\gtrsim B$) implies $A \le CB$ (resp., $A\ge CB$) for a generic constant $C > 0$ independent of the functions.

We adopt the Einstein summation convention over repeated indices. For a tensor, $(\cdot)_{\mathrm{stf}}$ or angle brackets $\langle \dots \rangle$ around its indices denote its symmetric and trace-free (stf) part. Specifically, for a second-order tensor $A$ and a third-order tensor $B$, their stf components are defined as:
\begin{equation*}
A_{\langle ij \rangle} = A_{(ij)} - \frac{1}{3}A_{kk}\delta_{ij}, \quad B_{\langle ijk \rangle} = B_{(ijk)} - \frac{1}{5}\big(B_{(ill)}\delta_{jk} + B_{(ljl)}\delta_{ik} + B_{(llk)}\delta_{ij}\big),
\end{equation*}
where $A_{(ij)}$ and $B_{(ijk)}$ denote the fully symmetric parts obtained by averaging over all index permutations \cite{westerkamp2019finite}, and $\delta_{ij}$ is the Kronecker delta.

In the kinetic context, let $f_{\mathcal{M}}(\boldsymbol{\xi}) := (2\pi)^{-3/2} \exp(-|\boldsymbol{\xi}|^2/2)$ denote the normalized global Maxwell distribution. We consider microscopic distribution functions belonging to the weighted Hilbert space 
\begin{displaymath}
    \mathcal{H}\coloneqq L^2(\mathbb{R}^3,f_\mathcal{M}(\bxi) \mathrm{d} \bxi) = \left\{ f \;\bigg|\; \int_{\mathbb{R}^3} \left( f(\bxi) \right)^2 f_\mathcal{M}(\bxi) \,\mathrm{d} \bxi < + \infty \right\},
\end{displaymath}
which is equipped with the inner product $\langle g_1, g_2 \rangle := \int_{\mathbb{R}^3} g_1(\boldsymbol{\xi})g_2(\boldsymbol{\xi}) f_{\mathcal{M}}(\boldsymbol{\xi}) \mathrm{d}\boldsymbol{\xi}$.


\section{Linearized R13 System} 
\label{Sec:LR13}
The linearized Boltzmann equation for the distribution function $f$ is:
\begin{equation}
    \label{eq:Linearized_Boltzmann}
    \pdfFrac{f}{t}+\sum_{k=1}^{d}\xi_{k}\pdfFrac{f}{\xi_{k}}=\frac{1}{\Kn}\mathcal{L}f,
\end{equation}
where $\mathcal{L}$ is a self-adjoint negative semidefinite linear operator. 
In this section, we outline the spectral and Enskog expansions for the distribution function $f$ and subsequently present the linearized R13 system along with its weak formulation.
\subsection{Moment variables and Chapman–Enskog expansion}
We briefly recall from \cite{lin2025time} the moment basis and the orthogonal decomposition underlying the linearized R13 system. Let
$w_{i_1 \cdots i_l}^n = \langle \psi_{i_1 \cdots i_l}^n, f \rangle,$
where the polynomial basis $\psi_{i_1 \cdots i_l}^n(\boldsymbol{\xi})$ is given by
\begin{equation}
\label{eq:psi}
\psi_{i_1 \cdots i_l}^n(\boldsymbol{\xi}) = \bar{L}_n^{(l+1/2)} \left( \frac{|\boldsymbol{\xi}|^2}{2} \right) \xi_{\langle i_1} \cdots \xi_{i_l \rangle},
\end{equation}
with $\bar{L}_n^{(l+1/2)}$ denoting the normalized associated Laguerre polynomial:
\begin{displaymath}
\bar{L}_n^{(l+1/2)}(x) = \sqrt{\frac{\sqrt{\pi}}{2^{l+1} n! \Gamma (n+l+3/2)}} x^{-(l+1/2)} \left( \frac{\mathrm{d}}{\mathrm{d}x} - 1 \right)^n x^{n+l+1/2}.
\end{displaymath}
These moments are related to Grad's 13 moments through the relations: 
\begin{displaymath}
\rho = w^0, \quad u_i = \sqrt{3} w_i^0, \quad \theta = -\sqrt{\frac{2}{3}} w^1, \quad \sigma_{ij} = \sqrt{15} w_{ij}^0, \quad s_i = -\sqrt{\frac{15}{2}} w_i^1,
\end{displaymath}
where $\rho$, $u_i$, $\theta$, $\sigma_{ij}$, and $s_i$ represent the density, velocity, temperature, shear stress, and heat flux, respectively.


Applying the Chapman–Enskog expansion with $\Kn$ as the small parameter determines the asymptotic order of the moments and the linear dependencies among them, see \cite{lin2025time} for details. This yields the R13 basis $\tilde{\bm{\phi}}_{13}\coloneqq\{\psi^0,\psi_{i}^{0},\psi^1,\phi_{ij}^{0},\phi_{i}^1\}$ and the corresponding variables:
\begin{displaymath}
\rho, \quad \theta, \quad u_i, \quad \bar{s}_i = \langle \phi_i^1, f \rangle, \quad \bar{\sigma}_{ij} = \langle \phi_{ij}^0, f \rangle,
\end{displaymath}
where basis functions $\phi_i^1$ and $\phi_{ij}^0$ are linear combinations of the polynomials $\psi_{i_1\cdots i_l}^n$, 
\begin{equation}
\label{eq:phi}
\phi_i^{1} = \sum_{n=1}^{+\infty} c_1^{1,n} \psi_i^n, \qquad
\phi_{ij}^{0} = \sum_{n=0}^{+\infty} c_2^{0,n} \psi_{ij}^n,
\end{equation}
and
\begin{equation} \label{eq:c_normalization}
    \sum_{n=1}^{+\infty} \left( c_1^{1,n} \right)^2 = \frac{15}{2}, \qquad \sum_{n=0}^{+\infty} \left( c_2^{0,n} \right)^2 = 15.
\end{equation}
These coefficients $c_1^{1,n}$ and $c_2^{0,n}$ depend on the collision model, and they are determined by asymptotic analysis, which guarantees for any function $\phi$ satisfying $\langle \phi, \tilde{\boldsymbol{\phi}}_{13} \rangle = 0$, the inner product $\langle \phi, f\rangle$ has order $o(\Kn)$ if it is finite. The normalization \eqref{eq:c_normalization} is chosen such that $\bar{s}_i$ and $\bar{\sigma}_{ij}$ reduce to the classical heat flux and shear stress for Maxwell molecules.

To characterize the $\mathcal{O}(\Kn^2)$ contribution required at the super-Burnett order, \cite{lin2025time} further constructs a second-order basis $\tilde{\bm{\phi}}{}^{(2)}$, orthogonal to $\tilde{\bm{\phi}}_{13}$, and obtains the orthogonal decomposition: 

\begin{displaymath}
\mathcal{H}= \mathbb{V}^{(0)} \oplus \mathbb{V}^{(1)} \oplus \mathbb{V}^{(2)} \oplus \mathbb{V}^{(\mathrm{r})},
\end{displaymath}
where $f\in\mathbb{V}^{(k)}$ satisfies $f=\mathcal{O}(\Kn^{k})$ for $k=0,1,2$, and $\mathbb{V}^{(r)}$ contains higher-order remainders.

Denoting by $\mathcal{P}^{(k)}$ the projection operator from $\mathcal{H}$ onto $\mathbb{V}^{(k)}$, we write:
\begin{equation}
\label{eq:Enskog_ext}
f = f^{(0)} + f^{(1)} + f^{(2)} + f^{(\mathrm{r})},
\end{equation}
where $f^{(k)} = \mathcal{P}^{(k)} f $ for $k=0,1,2$, and $f^{(\mathrm{r})} = \mathcal{P}^{(\mathrm{r})} f \in \mathbb{V}^{(\mathrm{r})}$ is the high-order remainder. In particular, \cite{lin2025time} gives:
\begin{equation}
    \label{eq:expression_for_f0_f1}
    \begin{aligned}
        f^{(0)}=\rho\psi^{0}-\sqrt{\frac{3}{2}}\theta\psi^{1}+\sum_{i=1}^{3}\sqrt{3}u_i\psi_{i}^{0},\quad f^{(1)}=\frac{2}{5}\sum_{i=1}^3\bar{s}_i\phi_{i}^1+\frac{1}{2}\sum_{i,j=1}^{3}\bar{\sigma}_{ij}\phi_{ij}^{0}.
    \end{aligned}
\end{equation}
Define the projection operators $\mathcal{P}^{(i)} f := f^{(i)}$ and $\mathcal{P}^{(01)}\coloneqq\mathcal{P}^{(0)}+\mathcal{P}^{(1)}$, by letting $f_0 := f^{(0)} + f^{(1)} + f^{(2)}$, the linearized R13 system can be expressed as:
\begin{equation}
    \label{eq:R13_distribution}
    \pdfFrac{}{t}(\mathcal{P}^{(01)}f_0)+\sum_{k=1}^{d}\pdfFrac{}{x_k}(\mathcal{A}_{k}f_0)=\frac{1}{\Kn}\tilde{\mathcal{L}}f_0,
\end{equation}
where the operators $\mathcal{A}_k$ and $\tilde{\mathcal{L}}$ are defined by:
\begin{equation}
    \label{eq:approximated_ops}
    \begin{aligned}
    \mathcal{A}_{k}&\coloneqq\mathcal{P}^{(01)}\xi_{k}\mathcal{P}^{(01)}+\mathcal{P}^{(2)}\xi_{k}\mathcal{P}^{(01)}+\mathcal{P}^{(01)}\xi_{k}\mathcal{P}^{(2)},\\\
    \tilde{\mathcal{L}}&\coloneqq (\mathcal{P}^{(1)}+\mathcal{P}^{(2)})\mathcal{L}(\mathcal{P}^{(1)}+\mathcal{P}^{(2)}).
    \end{aligned}
\end{equation}

\subsection{Moment equations and boundary conditions} For the aforementioned macroscopic variables, the governing R13 equations read:
\begin{equation}\label{eq:R13_evolution}
\begin{cases}
\partial_t \rho = -\nabla \cdot \boldsymbol{u},\quad\partial_t \theta = -\frac{2}{3}\nabla \cdot \boldsymbol{u} - \frac{2}{3}k_0\nabla \cdot \bar{\boldsymbol{s}} + k_1 \mathrm{Kn} \Delta \theta - k_2 \mathrm{Kn} \nabla \cdot (\nabla \cdot \bar{\boldsymbol{\sigma}}), \\
\partial_t \boldsymbol{u} = -\nabla \rho - \nabla \theta + k_3 \mathrm{Kn} \nabla \cdot (\nabla \boldsymbol{u})_{\mathrm{stf}}+ k_4 \mathrm{Kn} \nabla \cdot (\nabla \bar{\boldsymbol{s}})_{\mathrm{stf}} - k_5 \nabla \cdot \bar{\boldsymbol{\sigma}}, \\
\partial_t \bar{\boldsymbol{s}} = -\frac{5}{2}k_0\nabla \theta + \frac{5}{2}k_4 \mathrm{Kn} \nabla \cdot (\nabla \boldsymbol{u})_{\mathrm{stf}} + 2k_6 \mathrm{Kn} \nabla(\nabla \cdot \bar{\boldsymbol{s}})\\
\quad+ \frac{12}{5} k_7 \mathrm{Kn} \nabla \cdot (\nabla \bar{\boldsymbol{s}})_{\mathrm{stf}} - k_8 \nabla \cdot \bar{\boldsymbol{\sigma}} - \frac{2l_1}{3\mathrm{Kn}}\bar{\boldsymbol{s}}, \\
\partial_t \bar{\boldsymbol{\sigma}} = -3k_2 \mathrm{Kn} (\nabla^2 \theta)_{\mathrm{stf}} - 2k_5 (\nabla \boldsymbol{u})_{\mathrm{stf}} - \frac{4}{5} k_8 (\nabla \bar{\boldsymbol{s}})_{\mathrm{stf}} \\
\quad + 2k_9 \mathrm{Kn} \nabla \cdot (\nabla \bar{\boldsymbol{\sigma}})_{\mathrm{stf}} + k_{10} \mathrm{Kn} \big(\nabla (\nabla \cdot \bar{\boldsymbol{\sigma}})\big)_{\mathrm{stf}} - \frac{l_2}{\mathrm{Kn}}\bar{\boldsymbol{\sigma}}.
\end{cases}
\end{equation}
The coefficients $k_i$ and $l_i$ depend on the specific gas molecular model. Following \cite{lin2025time}, \eqref{eq:R13_evolution} is equivalent to \eqref{eq:R13_distribution}, and the auxiliary variables $\bar{\boldsymbol{\sigma}}$ and $\bar{\boldsymbol{s}}$ relate to the physical stress $\boldsymbol{\sigma}$ and heat flux $\boldsymbol{s}$ via:
\begin{equation}\label{eq:qbartoq}
\sigmab = k_5\sigmabarb-k_4 \Kn (\nabla \qbarb)_{\stf} -k_3 \Kn(\nabla \bm{u})_{\stf}, \quad \qb = k_0\qbarb-\frac{3}{2}k_1\Kn \nabla\theta+\frac{3}{2}k_2\Kn\nabla\cdot \sigmabarb.
\end{equation}
Let $(\boldsymbol{t}_1,\boldsymbol{t}_2,\boldsymbol{n})$ be a boundary-aligned orthonormal frame with outer unit normal $\boldsymbol{n}$. We denote the projected components by $u_a := \boldsymbol{u} \cdot \boldsymbol{a}$ and $\sigma_{ab} := \sum_{i,j} \sigma_{ij} a_i b_j$ for $\bm{a}, \bm{b} \in \{\bm{t}_1, \bm{t}_2, \bm{n}\}$. Assuming wall temperature $\theta^W$ and velocity $\boldsymbol{u}^W$, the Onsager boundary conditions take the form:
\begin{equation}
\left\{\begin{array}{@{\!\!\!\!\!}ll}
\label{eq:Onsager_detail}
     &u_n = 0, \\
     &\bar{s}_{n}  = \tilde{\chi} \left[ m_{11} (\theta -\theta^{W}) +m_{12} \bar{\sigma}_{nn} - \Kn \left(m_{13}\frac{\partial \bar{s}_j}{\partial x_j}+m_{14}\frac{\partial \bar{s}_{\langle n}}{\partial x_{n \rangle}}+ m_{15}\frac{\partial u_{\langle n}}{\partial x_{n \rangle}}\right) \right], \\ 
      &m_{26}\bar{s}_{n} + m_{27} \Kn \frac{\partial \theta}{\partial x_n} - m_{28} \Kn \frac{\partial \bar{\sigma}_{nj}}{\partial x_j}   \\
     &\quad =\tilde{\chi} \left[-m_{21} (\theta -\theta^{W}) + m_{22} \bar{\sigma}_{nn} + \Kn  \left( m_{23} \frac{\partial \bar{s}_j}{\partial x_j} +  m_{24} \frac{\partial \bar{s}_{\langle n}}{\partial x_{n \rangle}}  + m_{25} \frac{\partial u_{\langle n}}{\partial x_{n \rangle}} \right) \right],  \\ 
      & \bar{\sigma}_{t_i n} = \tilde{\chi} \left[m_{31} (u_{t_i} -u^{W}_{t_i}) +m_{32}\bar{s}_{t_i} - \Kn\left(m_{33}\frac{\partial \bar{\sigma}_{t_i j}}{\partial x_j}  + m_{34}\frac{\partial \bar{\sigma}_{\langle t_i n}}{\partial x_{n \rangle}}  - m_{35}\frac{\partial \theta}{\partial x_{t_i}}\right) \right],\\ 
       &   m_{46}\bar{\sigma}_{t_in} + m_{47}\Kn\frac{\partial u_{\langle t_i}}{\partial x_{n\rangle}} + m_{48}\Kn\frac{\partial \bar{s}_{\langle t_i}}{\partial x_{n\rangle}}  \\
      & \quad = - \tilde{\chi} \left[-m_{41} (u_{t_i} -u^{W}_{t_i}) +m_{42}\bar{s}_{t_i} +\Kn \left(m_{43} \frac{\partial \bar{\sigma}_{t_i j}}{\partial x_j}  + m_{44} \frac{\partial \bar{\sigma}_{\langle t_i n}}{\partial x_{n \rangle}}  - m_{45} \frac{\partial \theta}{\partial x_{t_i}} \right) \right], \\ 
        &   m_{56}\bar{\sigma}_{t_in} + m_{57}\Kn\frac{\partial u_{\langle t_i}}{\partial x_{n\rangle}} + m_{58}\Kn\frac{\partial \bar{s}_{\langle t_i}}{\partial x_{n\rangle}}   \\
       & \quad  = - \tilde{\chi} \left[-m_{51} (u_{t_i} -u^{W}_{t_i}) +m_{52}\bar{s}_{t_i} +\Kn \left(m_{53}  \frac{\partial \bar{\sigma}_{t_i j}}{\partial x_j}  + m_{54} \frac{\partial \bar{\sigma}_{\langle t_i n}}{\partial x_{n \rangle}}  - m_{55} \frac{\partial \theta}{\partial x_{t_i}} \right) \right],\\ 
  &m_{66} \bar{s}_n + \Kn\left(m_{67} \frac{\partial \bar{\sigma}_{\langle nn }}{\partial x_{n \rangle}} 
 +m_{68}\frac{\partial \bar{\sigma}_{nj }}{\partial x_{j}} + m_{69}\frac{\partial \theta}{\partial x_n} \right) \\
 & \quad = - \tilde{\chi} \left[-m_{61} (\theta -\theta^{W}) +m_{62} \bar{\sigma}_{nn} + \Kn \left( m_{63} \frac{\partial \bar{s}_j}{\partial x_j} +  m_{64} \frac{\partial \bar{s}_{\langle n}}{\partial x_{n \rangle}} + m_{65} \frac{\partial u_{\langle n}}{\partial x_{n \rangle}} \right)\right],\\ 
  &\Kn\left(\frac{\partial \bar{\sigma}_{\langle t_i t_i}}{\partial x_{n \rangle}} + \frac{1}{2} \frac{\partial \bar{\sigma}_{\langle n n}}{\partial x_{n \rangle}} \right) =  -\tilde{\chi} m_{71} \left( \bar{\sigma}_{t_i t_i}  + \frac{1}{2} \bar{\sigma}_{nn}  \right),\quad\Kn \frac{\partial \bar{\sigma}_{\langle t_1 t_2}}{\partial x_{n\rangle}}  = -\tilde{\chi} m_{81} \bar{\sigma}_{t_1 t_2}.
\end{array} \right.
\end{equation}
Here, Einstein summation is implied over the repeated index $j$, and the conditions hold for each tangential direction $i \in \{1, 2\}$. The coefficients $m_{jk}$ parameterize the molecular interactions, and $\tilde{\chi}\coloneqq\frac{2\chi}{2-\chi}>0$ represents the modified Maxwell accommodation factor as introduced in \cite{westerkamp2019finite}. As a canonical example, the explicit expressions of the coefficients $k_i$, $l_i$, and $m_{jk}$ for the inverse $\eta$-th power intermolecular potential were derived in \cite{lin2025time}.

The boundary conditions in \eqref{eq:Onsager_detail} are a discrete form of Maxwell's boundary condition for the kinetic equation, and therefore can also be written as a single equation of $f_0$ \cite{lin2025time}. Such a representation is useful for the derivation of the entropy inequality and will be introduced later in \cref{subsec:Bdry_Term_for_Entropy}.
\begin{remark}
    For Maxwell molecules (i.e., $\eta = 5$), certain coefficients $k_i$ and $m_{ij}$ vanish, reducing the governing equations to the R13 system presented in \cite{lewintan2025wellposedness} (see \cite[Appendix B]{lin2025time} for detailed parameter mappings).
\end{remark}
\section{Entropy inequalities}
\label{Sec:Entropy}
Entropy inequalities, which encode the second law of thermodynamics, play a central role in establishing the well-posedness of the R13 system. While previous work \cite{lin2025time} has successfully derived an entropy inequality for the whole space $\mathbb{R}^{3}$, it did not account for the bounded domain $\Omega$ and the associated Onsager boundary conditions on $\partial\Omega$. Therefore, in this section, we formulate and rigorously establish the entropy inequalities for the R13 system on a bounded domain $\Omega \subset \mathbb{R}^3$.
\subsection{Compact form}
For convenience, we formulate the compact form of the R13 system \eqref{eq:R13_evolution} subject to the Onsager boundary conditions \eqref{eq:Onsager_detail} with wall boundary conditions. 
For each $\mathbf{x}\in\Omega$ and $t\in I$, defining the state vector $\boldsymbol{\mathcal{U}}(\mathbf{x},t)\coloneqq(\rho,\theta,\boldsymbol{u},\boldsymbol{\bar{s}},\bar{\boldsymbol{\sigma}})^{T}|_{(\mathbf{x},t)}\in\mathbf{V} := \mathbb{R} \times \mathbb{R} \times \mathbb{R}^3 \times \mathbb{R}^3 \times \mathbb{R}_{\stf}^{3\times3}$, the R13 system can be expressed as:
\begin{equation}
    \label{eq:time_dependent_compact_R13}
    \left\{
    \begin{aligned}
        M \partial_t \boldsymbol{\mathcal{U}} + \mathcal{A}(\nabla) \boldsymbol{\mathcal{U}} &= \mathcal{D}(\nabla, \nabla) \boldsymbol{\mathcal{U}} + S \boldsymbol{\mathcal{U}},\quad \mathbf{x}\in \Omega;\\
        \mathcal{G}(\boldsymbol{\mathcal{U}})(\mathbf{x},t)&= \mathbf{g}_{\text{ext}}(\mathbf{x},t),\quad\mathbf{x}\in\partial\Omega;\\
        \boldsymbol{\mathcal{U}}(\mathbf{x},0)&=\boldsymbol{\mathcal{U}}_0(\mathbf{x}),\quad\mathbf{x}\in\Omega,
    \end{aligned}
    \right.
\end{equation}
where $\mathcal{G}$ is a linear operator associated with the Onsager boundary conditions. The operators $M$ and $S$ represent the mass and relaxation matrices, given respectively by:
\begin{equation}
    \label{eq:mass_ops}
    M = \begin{bmatrix}
1 & 0 & 0 & 0 & 0 \\
0 & \frac{3}{2} & 0 & 0 & 0 \\
0 & 0 & \mathbf{I}_{3} & 0 & 0 \\
0 & 0 & 0 & \frac{2}{5} \mathbf{I}_{3} & 0 \\
0 & 0 & 0 & 0 & \frac{1}{2} \mathbb{I}_{\stf}
\end{bmatrix},\quad S = -\frac{1}{\Kn} \begin{bmatrix}
0 & 0 & 0 & 0 & 0 \\
0 & 0 & 0 & 0 & 0 \\
0 & 0 & 0 & 0 & 0 \\
0 & 0 & 0 & \frac{4}{15}l_1 \mathbf{I}_{3} & 0 \\
0 & 0 & 0 & 0 & \frac{1}{2}l_2 \mathbb{I}_{\stf}
\end{bmatrix},
\end{equation}
where $\mathbf{I}_{3}$ denotes the identity operator on $\mathbb{R}^3$, and the operator $\mathbb{I}_{\stf}$ is the identity operator on $\mathbb{R}_{\stf}^{3\times 3}$. The first-order derivative operator $\mathcal{A}(\nabla)$ is defined as:
\begin{equation}
    \label{eq:ops_A}
    \mathcal{A}(\nabla) = \begin{bmatrix}
0 & 0 & \nabla\cdot & 0 & 0 \\
0 & 0 & \nabla\cdot & k_0\nabla\cdot & 0 \\
\nabla & \nabla & 0 & 0 & k_5\nabla\cdot \\
0 & k_0\nabla & 0 & 0 & \frac{2}{5}k_8\nabla\cdot \\
0 & 0 & k_5\stf~\nabla & \frac{2}{5}k_8\stf~\nabla & 0
\end{bmatrix},
\end{equation}
and the second order derivative operator $\mathcal{D}(\nabla,\nabla)$ is given by:
\normalsize
\begin{equation}
    \begin{aligned}
    \label{eq:ops_D}
&\mathcal{D}(\nabla, \nabla) = \Kn \begin{bmatrix}
0 & 0 & 0 & 0 & 0 \\
0 & \frac{3}{2}k_1\Delta & 0 & 0 & -\frac{3}{2}k_2\nabla\cdot(\nabla\cdot) \\
0 & 0 & k_3\nabla\cdot(\stf~\nabla) & k_4\nabla\cdot(\stf~\nabla ) & 0 \\
0 & 0 & k_4\nabla\cdot(\stf~\nabla)& \mathcal{D}_{44} & 0 \\
0 & -\frac{3}{2}k_2\stf~\nabla^2 & 0 & 0 & \mathcal{D}_{55}
\end{bmatrix},\\
&\mathcal{D}_{44} = \frac{4}{5}k_6\nabla(\nabla\cdot) + \frac{24}{25}k_7\nabla\cdot(\stf~\nabla),\qquad
\mathcal{D}_{55} = k_9\nabla\cdot(\stf~\nabla)+ \frac{1}{2}k_{10}(\stf~\nabla\nabla\cdot),
\end{aligned}
\end{equation}
\normalsize Furthermore, $\mathbf{g}_{\text{ext}}$ denotes the vector of boundary data corresponding to the boundary distribution $f_W$, and $\boldsymbol{\mathcal{U}}_0$ represents the initial state of the system.
\begin{remark}
    The system formulated in \eqref{eq:time_dependent_compact_R13} takes a form analogous to generalized advection-diffusion equations subject to mixed boundary conditions. It is worth noting that the only Dirichlet boundary condition in our system is the non-penetration condition $u_n = 0$; all others contain normal derivatives.
\end{remark}
\subsection{Entropy estimation}
\label{subsec:entropy}
In this section, we derive a formal energy estimate for sufficiently smooth solutions $\boldsymbol{\mathcal{U}}$ to \eqref{eq:time_dependent_compact_R13}, assuming that all derivatives and boundary traces appearing below are well defined and that the Onsager boundary conditions hold in the classical sense. Write the inner product between $\boldsymbol{\mathcal{U}}_1,\boldsymbol{\mathcal{U}}_2\in\mathbf{V}$ as:
\begin{equation}
    \label{eq:inner_prod_macro}
    \innerprod{\boldsymbol{\mathcal{U}}_1}{\boldsymbol{\mathcal{U}}_2}_{\mathbf{V}} := \rho_1 \rho_2 + \theta_1 \theta_2 + \boldsymbol{u}_1 \cdot \boldsymbol{u}_2 + \overline{\boldsymbol{s}}_1 \cdot \overline{\boldsymbol{s}}_2 + \overline{\boldsymbol{\sigma}}_1 : \overline{\boldsymbol{\sigma}}_2,
\end{equation}
By taking the inner product of \eqref{eq:time_dependent_compact_R13} with $\boldsymbol{\mathcal{U}}$ and integrating over the domain $\Omega$, we obtain:
\begin{equation}
    \label{eq:energy_integral}
    \frac{1}{2}\pdfFrac{}{t}\int_{\Omega}\innerprod{\boldsymbol{\mathcal{U}}}{M\boldsymbol{\mathcal{U}}}_{\mathbf{V}}\dif\mathbf{x}+\underbrace{\int_{\Omega}\innerprod{\boldsymbol{\mathcal{U}}}{\mathcal{A}(\nabla)\boldsymbol{\mathcal{U}}}_{\mathbf{V}}\dif\mathbf{x}}_{I_1}=\underbrace{\int_{\Omega}\innerprod{\boldsymbol{\mathcal{U}}}{\mathcal{D}(\nabla,\nabla)\boldsymbol{\mathcal{U}}+S\boldsymbol{\mathcal{U}}}_{\mathbf{V}}\dif\mathbf{x}}_{I_2}.
\end{equation}
Applying the divergence theorem and utilizing the non-penetration condition $u_n = 0$, the term $I_1$ can be evaluated as:
\begin{equation}
    \label{eq:I1}
    \begin{aligned}
        I_1&=\int_{\Omega}\nabla\cdot(\rho\boldsymbol{u}+k_0\theta\bar{\boldsymbol{s}}+\theta\boldsymbol{u}+k_5\bar{\boldsymbol{\sigma}}\boldsymbol{u}+\frac{2}{5}k_8\bar{\boldsymbol{\sigma}}\bar{\boldsymbol{s}})\dif\mathbf{x}\\
        &=\int_{\partial\Omega}\left(k_0\theta s_n+k_5\sum_{i=1}^2\bar{\sigma}_{nt_i}u_{t_i}+\frac{2}{5}k_8\left(\sum_{i=1}^2\bar{\sigma}_{nt_i}\bar{s}_{t_i}+\bar{\sigma}_{nn}\bar{s}_n\right)\right)\dif s\coloneqq \mathcal{F}_1(\boldsymbol{\mathcal{U}}).
    \end{aligned}
\end{equation}
Similarly, using integration by parts, the term $I_2$ can be decomposed into a volume integral $\mathcal{W}_1$ and a boundary integral $\mathcal{F}_2$, such that $I_2 = \mathcal{W}_1 + \mathcal{F}_2$, where:
\begin{equation}
    \label{eq:W1_term}
    \begin{aligned}
    \mathcal{W}_1
    &=-\mathrm{Kn} \int_{\Omega} \left[ \left(\frac{3}{2}k_{1} |\nabla\theta|^2 - 3k_{2} \nabla\theta \cdot (\nabla \cdot \bar{\boldsymbol{\sigma}})+ \frac{1}{2}k_{10} |\nabla \cdot \bar{\boldsymbol{\sigma}}|^2 \right)+ \frac{4}{5}k_{6} |\nabla \cdot \bar{\boldsymbol{s}}|^2 \right.\\
    &
    \left.+\left(k_{3} |(\nabla \boldsymbol{u})_{\stf}|^2 + 2k_{4} (\nabla \boldsymbol{u})_{\stf} : (\nabla \bar{\boldsymbol{s}})_{\stf} + \frac{24}{25}k_{7} |(\nabla \bar{\boldsymbol{s}})_{\stf}|^2\right) + k_{9} |(\nabla \bar{\boldsymbol{\sigma}})_{\stf}|^2  \right] \dif\mathbf{x}\\
    & -\frac{1}{\mathrm{Kn}} \int_{\Omega} \left( \frac{4}{15}l_{1} |\bar{\boldsymbol{s}}|^2 + \frac{1}{2}l_{2} |\bar{\boldsymbol{\sigma}}|^2 \right) \dif \mathbf{x},\\
    \end{aligned}
\end{equation}
\begin{equation}
    \label{eq:F2_term}
    \begin{aligned}
    \mathcal{F}_2&=\mathrm{Kn} \int_{\partial\Omega} \left[ \frac{3}{2}k_{1} \theta \frac{\partial \theta}{\partial n} - \frac{3}{2}k_{2} \theta \boldsymbol{n} \cdot (\nabla \cdot \bar{\boldsymbol{\sigma}}) - \frac{3}{2}k_{2} (\bar{\boldsymbol{\sigma}} \boldsymbol{n}) \cdot \nabla \theta + k_{3} \boldsymbol{u} \cdot ((\nabla \boldsymbol{u})_{\stf} \boldsymbol{n}) \right.\\
    &+ k_{4} \boldsymbol{u} \cdot ((\nabla \bar{\boldsymbol{s}})_{\stf} \boldsymbol{n}) + k_{4} \bar{\boldsymbol{s}} \cdot ((\nabla \boldsymbol{u})_{\stf} \boldsymbol{n}) + \frac{4}{5}k_{6} (\bar{\boldsymbol{s}} \cdot \boldsymbol{n})(\nabla \cdot \bar{\boldsymbol{s}}) + \frac{24}{25}k_{7} \bar{\boldsymbol{s}} \cdot ((\nabla \bar{\boldsymbol{s}})_{\stf} \boldsymbol{n}) \\
    &+ \left.k_{9} \bar{\boldsymbol{\sigma}} : ((\nabla \bar{\boldsymbol{\sigma}})_{\stf} \boldsymbol{n}) + \frac{1}{2}k_{10} (\bar{\boldsymbol{\sigma}} \boldsymbol{n}) \cdot (\nabla \cdot \bar{\boldsymbol{\sigma}}) \right] \dif s.
    \end{aligned}
\end{equation}
According to the parameter constraints established in \cite{lin2025time} (i.e. $k_{i}\ge 0$, $l_j>0$, $3k_2^2\le k_1k_{10}$ and $25k_4^2\le 24k_3k_7$), we have $\mathcal{W}_1 \le 0$. Consequently, the entropy equation on the bounded domain can be written as:
\begin{equation}
    \label{eq:entropy_ineq_1}
    \frac{1}{2}\pdfFrac{}{t}\int_{\Omega}\innerprod{\boldsymbol{\mathcal{U}}}{M\boldsymbol{\mathcal{U}}}_{\mathbf{V}}\dif\mathbf{x}=\mathcal{W}_1+ (\mathcal{F}_2(\boldsymbol{\mathcal{U}})-\mathcal{F}_{1}(\boldsymbol{\mathcal{U}}))\coloneqq \mathcal{W}_1-I_{\mathrm{bdry}}.
\end{equation}
\begin{remark}
Estimating the boundary term $I_{\mathrm{bdry}}$ directly using the moment-based formulation of the Onsager boundary conditions \eqref{eq:Onsager_detail} is technically cumbersome. To overcome this difficulty, we exploit the equivalent alternative formulation \eqref{eq:OnsagerBC_Distribution} to bound this term effectively.
\end{remark}
\subsection{Boundary terms}
\label{subsec:Bdry_Term_for_Entropy}

Since direct substitution of the macroscopic Onsager boundary conditions yields intricate algebraic expressions, we bypass this difficulty by returning to the microscopic perspective. Recall that the R13 equations \eqref{eq:R13_evolution} is equivalent to  \eqref{eq:R13_distribution} formulated for $f_0 = f^{(0)} + f^{(1)} + f^{(2)}$, a function in the finite dimensional space $\mathbb{V}^{(0)} \oplus \mathbb{V}^{(1)} \oplus \mathbb{V}^{(2)}$.
As a result, all functionals of $\boldsymbol{\mathcal{U}}$ can be written equivalently as functionals of $f_0$.
We are then able to rewrite  \eqref{eq:entropy_ineq_1} as an inequality involving $f_0$ instead of $\boldsymbol{\mathcal{U}}$, where one can obtain an alternative formula for $I_{\mathrm{bdry}}$ with a clearer structure, allowing us to find its lower bound more systematically.

Recall that $\mathcal{P}^{(01)}$ denotes the projection operator onto $\mathbb{V}^{(0)} \oplus \mathbb{V}^{(1)}$. We define
\begin{displaymath}
\tilde{f} := f^{(0)} + f^{(1)} = \mathcal{P}^{(01)}f_0.
\end{displaymath}
The following lemma gives a representation for the left-hand side of  \eqref{eq:entropy_ineq_1}:
\begin{lemma}
    \label{Lem:energy_equivalent}
    Let $\boldsymbol{\mathcal{U}}$ be the state vector associated with the truncated distribution function $\tilde{f} \in \mathbb{V}^{(0)} \oplus \mathbb{V}^{(1)}$. It holds that
    \begin{equation}
        \innerprod{\tilde{f}}{\tilde{f}}=\innerprod{\boldsymbol{\mathcal{U}}}{M\boldsymbol{\mathcal{U}}}_{\mathbf{V}}
    \end{equation}
\end{lemma}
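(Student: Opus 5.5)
The identity follows by expanding $\tilde f = f^{(0)}+f^{(1)}$ from \eqref{eq:expression_for_f0_f1} and computing $\langle\tilde f,\tilde f\rangle$ term by term with the orthonormality of the basis $\psi^n_{i_1\cdots i_l}$ in $\mathcal{H}$. Two standard facts are needed. First, for fixed $l$ the normalized Laguerre factors $\bar L_n^{(l+1/2)}(|\bxi|^2/2)$ are orthonormal in $n$ against the radial weight. Second, $\xi_{\langle i_1}\cdots\xi_{i_l\rangle}$ are harmonic polynomials, so different $l$ are orthogonal. With the normalization in \eqref{eq:psi}, this gives $\langle\psi^n_{i_1\cdots i_l},\psi^{m}_{j_1\cdots j_k}\rangle=0$ unless $l=k$ and $n=m$. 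In that case, for $l=0,1$ the Gram matrix is $\delta_{ij}$. For $l=2$ it is the stf projector $\Delta_{ij,kl}=\frac12(\delta_{ik}\delta_{jl}+\delta_{il}\delta_{jk})-\frac13\delta_{ij}\delta_{kl}$, whose contraction with a symmetric trace-free tensor returns that tensor. I will check these $l\le2$ cases directly from the definition of $\bar L_n^{(l+1/2)}$ and the spherical integrals $\int \xi_i\xi_j f_{\mathcal M}=\delta_{ij}$ and $\int\xi_i\xi_j\xi_k\xi_l f_{\mathcal M}=\delta_{ij}\delta_{kl}+\delta_{ik}\delta_{jl}+\delta_{il}\delta_{jk}$. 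The main concern is the normalization, namely that the prefactor in $\bar L_n^{(l+1/2)}$ yields exactly norm one (and the projector above for $l=2$). I expect this because the moment relations $\rho=w^0$, $u_i=\sqrt3 w^0_i$, and so on are consistent with it.

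The key steps are as follows. (i) \textbf{Cross terms vanish.} Cross terms between $f^{(0)}$ and $f^{(1)}$ vanish: $\phi^1_i$ involves only $\psi^n_i$ with $n\ge1$, so it is orthogonal to $\psi^0_i$, and to $\psi^0$, $\psi^1$ by degree $l$. Likewise $\phi^0_{ij}$ has $l=2$. (ii) \textbf{Equilibrium part.} We get
\[
\langle f^{(0)},f^{(0)}\rangle=\rho^2+\tfrac32\theta^2+3|\bm u|^2\cdot\tfrac13 .
\]
Here $\|\sqrt3u_i\psi^0_i\|^2$ needs care. Using $u_i=\sqrt3 w^0_i$ means $\psi^0_i$ has unit norm, but the coefficient in \eqref{eq:expression_for_f0_f1} must then be such that $\langle \psi^0_i, f^{(0)}\rangle=w_i^0=u_i/\sqrt3$. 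So I will reconcile the stated coefficients with the normalization of $\psi_i^0$: if $\psi_i^0=\xi_i$ with $\bar L_0^{(3/2)}$ normalized so that $\|\psi^0_i\|=1/\sqrt3$, then $\|\sqrt3 u_i\psi_i^0\|^2=|\bm u|^2$. The same check applies to $\psi^1$, giving $\frac32\theta^2$. Whichever convention holds, the outcome must be the diagonal entries $1,\tfrac32,\mathbf I_3$ of $M$. I will fix the convention by testing with $\langle\psi,\cdot\rangle$ against the moment relations, which is the one genuinely delicate bookkeeping step.

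(iii) \textbf{Heat-flux part.} Using \eqref{eq:phi} and orthogonality in $n$,
\[
\langle \phi^1_i,\phi^1_j\rangle=c\,\delta_{ij}\sum_{n\ge1}(c_1^{1,n})^2,
\]
where $c$ is the norm constant of $\psi^n_i$. The normalization \eqref{eq:c_normalization} then gives $\|\frac25\bar s_i\phi^1_i\|^2=\frac4{25}\cdot\frac{15}{2}c\,|\bar{\bm s}|^2$. With $c=1/3$ this equals $\frac25|\bar{\bm s}|^2$, matching $M$. (iv) \textbf{Stress part.} Similarly, $\langle\phi^0_{ij},\phi^0_{kl}\rangle=c'\cdot15\,\Delta_{ij,kl}$, so $\|\frac12\bar\sigma_{ij}\phi^0_{ij}\|^2=\frac{15}{4}c'|\bar{\bm\sigma}|^2$. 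With $c'=2/15$ (the norm of $\xi_{\langle i}\xi_{j\rangle}$-type entries) this gives $\frac12|\bar{\bm\sigma}|^2$.

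Steps (iii) and (iv) are self-consistent exactly when $\psi^n_{i}$ and $\psi^n_{ij}$ carry these component norms. This is consistent with the Maxwell-molecule reduction, where $\bar s_i=s_i=-\sqrt{15/2}\,w^1_i$ and $\bar\sigma=\sigma=\sqrt{15}\,w^0_{ij}$. Since this normalization is forced by the requirement that $\bar{\bm s}$ and $\bar{\bm\sigma}$ reduce to the physical quantities, I will use that reduction to pin down the constants $c$ and $c'$. Summing (ii)–(iv) then yields $\langle\tilde f,\tilde f\rangle=\langle\boldsymbol{\mathcal U},M\boldsymbol{\mathcal U}\rangle_{\mathbf V}$.
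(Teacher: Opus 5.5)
Your route is the paper's: expand $\tilde f$ by \eqref{eq:expression_for_tildef}, discard cross terms by orthogonality in $l$ and $n$, and evaluate each block using \eqref{eq:c_normalization}. Your step (i) is fine. Your final constants $c=\tfrac13$ and $c'=\tfrac{2}{15}$ are exactly the Gram relations \eqref{eq:inner_product_property_of_psi}, and the arithmetic in (ii)--(iv) is correct.

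The gap is in how you justify those constants, which is the whole content of the lemma. Your first paragraph asserts Gram matrices $\delta_{ij}$ for $l=1$ and $\Delta_{ij,kl}$ for $l=2$. That is false, and with it the identity fails: you would get $3|\bm u|^2$, $\tfrac65|\bar{\bm s}|^2$ and $\tfrac{15}{4}|\bar{\bm\sigma}|^2$. Likewise $\psi^0_i=\xi_i/\sqrt3$, not $\xi_i$. You then pick $c,c'$ so that the result matches $M$, which is circular. You also appeal to the Maxwell-molecule reduction, which does not determine them. There $\phi^1_i=-\sqrt{15/2}\,\psi^1_i$ and $\phi^0_{ij}=\sqrt{15}\,\psi^0_{ij}$, so $\bar{\bm s}=\bm s$ and $\bar{\bm\sigma}=\bm\sigma$ hold whatever the norms of $\psi^1_i$ and $\psi^0_{ij}$ are.

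There are two clean fixes.

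\textbf{Direct computation.} With $x=|\bxi|^2/2$, the Rodrigues form gives $\bar L_n^{(l+1/2)}=\sqrt{\sqrt\pi\,n!/(2^{l+1}\Gamma(n+l+3/2))}\,L_n^{(l+1/2)}$. The Laguerre orthogonality relation then shows that the radial integral, together with the factor $(2\pi)^{-3/2}$, equals $\delta_{nm}$. Hence $\langle\psi^n_{I},\psi^m_{J}\rangle$ is $\delta_{nm}$ times the spherical average of $\omega_{\langle I\rangle}\omega_{\langle J\rangle}$. This gives $1$, $\tfrac13\delta_{ij}$ and $\tfrac{2}{15}\Delta_{ij,kl}$; for example $\bar L_0^{(3/2)}=1/\sqrt3$ and $\bar L_0^{(5/2)}=1/\sqrt{15}$.

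\textbf{Projection argument.} Turn your idea of testing against the moment relations into the whole proof. $\tilde f$ is the orthogonal projection of $f$ onto $\mathbb V^{(0)}\oplus\mathbb V^{(1)}$, and this space contains $\psi^0,\psi^1,\psi^0_i,\phi^1_i,\phi^0_{ij}$. Expanding only one factor of $\langle\tilde f,\tilde f\rangle$ and using $\langle\phi,\tilde f\rangle=\langle\phi,f\rangle$ for these functions gives
\[
\langle\tilde f,\tilde f\rangle=\rho\,w^0-\sqrt{\tfrac32}\,\theta\,w^1+\sqrt3\,u_i w^0_i+\tfrac25\bar s_i\bar s_i+\tfrac12\bar\sigma_{ij}\bar\sigma_{ij}.
\]
By $w^0=\rho$, $w^1=-\sqrt{3/2}\,\theta$ and $w^0_i=u_i/\sqrt3$, this equals $\langle\boldsymbol{\mathcal U},M\boldsymbol{\mathcal U}\rangle_{\mathbf V}$. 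No Gram matrices are needed. This route relies on \eqref{eq:expression_for_f0_f1} being the orthogonal projection, which is how the paper introduces it.
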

\begin{proof}
    By \eqref{eq:expression_for_f0_f1}, adopting the Einstein summation convention over repeated spatial indices, the truncated distribution function $\tilde{f}$ is:
    \begin{equation}
        \label{eq:expression_for_tildef}
        \tilde{f}=\rho\psi^{0}-\sqrt{\frac{3}{2}}\theta\psi^{1}+\sqrt{3}u_i\psi_{i}^{0}+\frac{2}{5}\bar{s}_i\phi_{i}^{1}+\frac{1}{2}\bar{\sigma}_{ij}\phi_{ij}^{0}.
    \end{equation}
By orthogonality, the cross terms between different blocks vanish. Moreover, 
\begin{equation}
    \label{eq:inner_product_property_of_psi}
    \begin{aligned}
    &\innerprod{\psi^{n}}{\psi^{m}}=\delta_{nm},\quad\innerprod{\psi_{i}^{n}}{\psi_{j}^{m}}=\frac{1}{3}\delta_{ij}\delta_{nm},\\
    &\innerprod{\psi_{ij}^{n}}{\psi_{kl}^{m}}=\frac{1}{15}\delta_{nm}\left(\delta_{ik}\delta_{jl}+\delta_{il}\delta_{jk}-\frac{2}{3}\delta_{ij}\delta_{kl}\right).
    \end{aligned}
\end{equation}
Using \eqref{eq:inner_product_property_of_psi} and \eqref{eq:phi}, we obtain:
\begin{equation}
    \label{eq:innerprod_of_s_and_sigma}
    3u_iu_j\innerprod{\psi_{i}^0}{\psi_{j}^0}=\|\bbm{u}\|^2,\quad
    \innerprod{\bar{s}_i\phi_{i}^1}{\bar{s}_j\phi_{j}^1}=\frac{5}{2}\|\bbm{s}\|^2,\quad \innerprod{\frac{1}{2}\bar{\sigma}_{ij}\phi_{ij}^0}{\frac{1}{2}\bar{\sigma}_{kl}\phi_{kl}^0}=\frac{1}{2}\|\bbm{\sigma}\|^2.
\end{equation}
Substituting \eqref{eq:inner_product_property_of_psi} and \eqref{eq:innerprod_of_s_and_sigma} into \eqref{eq:expression_for_tildef} immediately yields:
\begin{equation}
    \label{eq:1st_simplify}
    \begin{aligned}
    \langle \tilde{f}, \tilde{f} \rangle = \rho^2 + \frac{3}{2}\theta^2 + \|\boldsymbol{u}\|^2 + \frac{2}{5}\|\bar{\boldsymbol{s}}\|^2 + \frac{1}{2}\|\bar{\boldsymbol{\sigma}}\|^2 = \langle \bcal{U}, M\bcal{U} \rangle_{\mathbf{V}}.
\end{aligned}
\end{equation}
Then we complete the proof.
\end{proof}

The orthogonality condition $\langle f^{(2)}, \tilde{f} \rangle = 0$ implies $\langle f_0, \tilde{f} \rangle = \langle \tilde{f}, \tilde{f} \rangle$. Testing equation \eqref{eq:R13_distribution} against $f_0$ and integrating over the domain $\Omega$ yields:
\begin{equation}
\label{eq:energy_for_pdf}
\frac{1}{2}\frac{\partial}{\partial t}\int_\Omega \langle \tilde{f}, \tilde{f} \rangle \,\mathrm{d}\mathbf{x} + \sum_{k=1}^d \int_\Omega \left\langle f_0, \frac{\partial}{\partial x_{k}}\mathcal{A}_k f_0 \right\rangle \,\mathrm{d}\mathbf{x} = \frac{1}{\mathrm{Kn}}\int_\Omega \langle f_0, \tilde{\mathcal{L}} f_0 \rangle \,\mathrm{d}\mathbf{x}. 
\end{equation}
In light of \cref{Lem:energy_equivalent}, the energy equalities \eqref{eq:energy_for_pdf} and \eqref{eq:entropy_ineq_1} are identical. An application of the divergence theorem then extracts the kinetic representation of the boundary flux:
\begin{equation}
\label{eq:boundary_value_from_pdf}
I_{\mathrm{bdry}} = \int_{\partial\Omega} \sum_{k=1}^d \left\langle f_0, (\mathcal{A}_k n_k) f_0 \right\rangle \,\mathrm{d}s \coloneqq\int_{\partial\Omega} \langle f_0, \mathcal{A}_n f_0 \rangle \,\mathrm{d}s.
\end{equation}
Below, we will provide a lower bound for this representation of $I_{\mathrm{bdry}}$.

To this end, we need a kinetic representation of the boundary conditions \eqref{eq:Onsager_detail}, expressed in terms of the distribution function $f_0$. The rest of this paragraph is based on the derivation of boundary conditions for R13 equations, presented in \cite{lin2025time}. As boundary conditions hold point-wise for all $\mathbf{x} \in \partial\Omega$ and $t \in I$, we suppress the spatial and temporal dependencies for brevity. The Maxwell's boundary conditions for the linearized R13 equations \eqref{eq:R13_distribution} can be formulated as:
\begin{equation}
    \label{eq:Maxwell_BC}
    f_0(\xi_n,\xi_{t_1},\xi_{t_2})=\chi f_{W}(\xi_{n},\xi_{t_1},\xi_{t_2})+(1-\chi)f_0(-\xi_n,\xi_{t_1},\xi_{t_2}),
\end{equation}
and we define the \textit{wall Maxwellian} as:
\begin{equation}
    \label{eq:wall_Maxwell}
    f_{W}=\rho_{W}+u_{W,t_1}\xi_{t_1}+u_{W,t_2}\xi_{t_2}+\frac{1}{2}\theta_{W}(\|\xi\|^2-3),
\end{equation}
where the density $\rho_W$ is uniquely chosen to ensure $u_n=\innerprod{\xi_{n}}{f_0}=0$ under the non-penetration boundary condition. 
This yields:
\begin{equation}
    \label{eq:rhoW}
    \rho_{W}=\sqrt{2\pi}\innerprod{(\xi_{n})_{+}}{f_0}-\frac{\theta_{W}}{2}.
\end{equation}
Following \cite{cai2024linear}, the Onsager boundary condition can be formulated as:
\begin{equation}
    \label{eq:OnsagerBC_Distribution}
    \mathcal{P}_{\odd} f_0=Q\mathcal{A}_{n}(f_{W}-\mathcal{P}_{\even}f_0).
\end{equation}
where $Q$ is a self-adjoint, negative semi-definite operator on $\mathbb{V}_{\odd}\coloneqq\mathcal{P}_{\mathrm{odd}}(\mathbb{V}_0 \oplus \mathbb{V}_1 \oplus \mathbb{V}_2)$ (i.e., $\langle f, Qf \rangle \le 0$ and $\langle f, Qg \rangle = \langle Qf, g \rangle$ for all $f, g \in \mathbb{V}_{\odd}$). 
The parity operators are defined accordingly:
\begin{equation}
    \label{eq:Projection_to_odd_and_even}
    \begin{aligned}
        \mathcal{P}_{\odd} f(\xi_{n},\xi_{t_1},\xi_{t_2})&\coloneqq\frac{f(\xi_{n},\xi_{t_1},\xi_{t_2})-f(-\xi_n,\xi_{t_1},\xi_{t_2})}{2},\\
        \mathcal{P}_{\even}f(\xi_n,\xi_{t_1},\xi_{t_2})&\coloneqq\frac{f(\xi_n,\xi_{t_1},\xi_{t_2})+f(-\xi_{n},\xi_{t_1},\xi_{t_2})}{2}.
    \end{aligned}
\end{equation}
We note that the constraint $u_n = 0$ implies $Q$ is not surjective on $\mathcal{H}_{\text{odd}} := \mathcal{P}_{\odd}\mathcal{H}$. 
To highlight the boundary condition $u_n=0$, we introduce the modified operator $\tilde{Q}$:
\begin{equation}
    \label{eq:tildeQ}
    \tilde{Q} f\coloneqq
    \left\{
    \begin{aligned}
    &Qf,\quad Q\xi_{n}=0;\\
    &Qf-\frac{\innerprod{\xi_n}{Qf}}{\innerprod{\xi_n}{Q\xi_n}}Q\xi_n,\quad Q\xi_{n}\neq 0,
    \end{aligned}
    \right.
\end{equation}
which ensures $\xi_{n}\in\ker\tilde{Q}$. 
\begin{remark}
    The modified operator $\tilde{Q}$ is well-defined. Since $Q$ is self-adjoint and negative semi-definite, the generalized Cauchy-Schwarz inequality implies $\forall f\in\mathcal{H}$, $|\langle Q\xi_n, f \rangle|^2 \le \langle \xi_n, Q\xi_n \rangle \langle f, Qf \rangle$. Thus, $\langle \xi_n, Q\xi_n \rangle = 0$ strictly necessitates $Q\xi_n = 0$, precluding any division by zero in the second branch of \eqref{eq:tildeQ}.
\end{remark}

Introducing the simplified notation $f_{\odd} := \mathcal{P}_{\odd}f_0$ and $f_{\even} := \mathcal{P}_{\even}f_0$, we have the following lemma showing that replacing $Q$ with $\tilde{Q}$ in  \eqref{eq:OnsagerBC_Distribution} makes no difference in the boundary condition:
\begin{lemma}
    \label{Lem:equivalence_of_ops}
    Assume the Onsager boundary condition \eqref{eq:OnsagerBC_Distribution} holds and $u_n=0$ on $\partial\Omega$, then $\forall f_0\in\mathcal{H}$, 
    \begin{equation}
        \label{eq:equivalence_of_ops}
        Q\mathcal{A}_n(f_{W}-f_{\even})=\tilde{Q}\mathcal{A}_n(f_{W}-f_{\even}).
    \end{equation}
\end{lemma}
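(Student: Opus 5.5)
Set $g := \mathcal{A}_n(f_W - f_{\even})$. By the definition \eqref{eq:tildeQ}, if $Q\xi_n = 0$ there is nothing to prove, since then $\tilde{Q} = Q$. In the other case,
\begin{displaymath}
\tilde{Q} g = Q g - \frac{\innerprod{\xi_n}{Qg}}{\innerprod{\xi_n}{Q\xi_n}}\, Q\xi_n ,
\end{displaymath}
and the denominator is nonzero by the preceding remark. So it suffices to show $\innerprod{\xi_n}{Qg} = 0$.

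This follows from the boundary condition itself. By \eqref{eq:OnsagerBC_Distribution}, $Qg = \mathcal{P}_{\odd} f_0 = f_{\odd}$. Hence
\begin{displaymath}
\innerprod{\xi_n}{Qg} = \innerprod{\xi_n}{f_{\odd}} .
\end{displaymath}

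The next step uses parity. The function $\xi_n$ is odd in $\xi_n$, $f_{\even}$ is even, and the weight $f_{\mathcal M}$ is even. Therefore $\innerprod{\xi_n}{f_{\even}} = 0$, and so
\begin{displaymath}
\innerprod{\xi_n}{f_{\odd}} = \innerprod{\xi_n}{f_0}.
\end{displaymath}

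Finally, $\xi_n = \sqrt{3}\,\psi^0_n$, so $\innerprod{\xi_n}{f_0} = u_n$ up to a nonzero constant (the relation $u_i = \sqrt3\, w^0_i$). The moment is unaffected by whether we pair with $f_0$ or $\tilde f$, because $f^{(2)}$ is orthogonal to $\psi_i^0 \in \mathbb{V}^{(0)}$. The hypothesis $u_n = 0$ on $\partial\Omega$ then gives $\innerprod{\xi_n}{Qg} = 0$, and therefore $\tilde{Q} g = Qg$.

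The main point needing care is the claim that $\xi_n$ and $f_{\odd}$ live in the space where these identities make sense. Specifically, we need $\xi_n \in \mathbb{V}_{\odd}$ so that $Q\xi_n$ is defined. This holds because $\xi_n$ is a linear combination of the $\psi^0_i$, which lie in $\mathbb{V}^{(0)}$, and $\xi_n$ is odd under the reflection $\xi_n \mapsto -\xi_n$.

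The statement says ``for all $f_0$'', but the argument uses only the two hypotheses: the Onsager condition and $u_n = 0$. No separate step for self-adjointness of $Q$ is needed, since the Onsager condition supplies $Qg$ directly.
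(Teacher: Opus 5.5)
Your proof is correct and follows the paper's argument. You reduce to the case $Q\xi_n\neq 0$, use the Onsager condition to identify $Q\mathcal{A}_n(f_W-f_{\even})$ with $f_{\odd}$, and then use $u_n=0$ to show $\langle \xi_n, f_{\odd}\rangle=0$, which removes the correction term in $\tilde{Q}$. You also spell out two steps the paper leaves implicit: the parity identity $\langle \xi_n, f_{\odd}\rangle=\langle \xi_n, f_0\rangle$, and the identification of $\langle \xi_n, f_0\rangle$ with $u_n$ (in fact the constant is exactly $1$, since $\psi^0_i=\xi_i/\sqrt{3}$).
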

\begin{proof}
   It suffices to consider the non-trivial case where $Q\xi_{n}\neq 0$. The non-penetration condition $u_n=0$ yields $\innerprod{\xi_{n}}{f}=0$, which straightforwardly implies $\innerprod{\xi_n}{f_{\odd}}=0$. Applying the Onsager boundary condition \eqref{eq:OnsagerBC_Distribution} yields:
    \begin{equation}
        \label{eq:Orthogonal}
        \innerprod{\xi_n}{Q\mathcal{A}_n(f_{W}-f_{\even})}=0.
    \end{equation}
    Substituting \eqref{eq:Orthogonal} back into the definition of $\tilde{Q}$ in \eqref{eq:tildeQ} immediately yields \eqref{eq:equivalence_of_ops}.
\end{proof}

In light of \cref{Lem:equivalence_of_ops}, the Onsager boundary condition can be equivalently recast using the modified operator as:
\begin{equation}
\label{eq:equivform_OnsagerBC_Distribution}
\mathcal{P}_{\odd} f=\tilde{Q}\mathcal{A}_{n}(f_{W}-\mathcal{P}_{\even}f).
\end{equation}
From \cite[Section 5]{lin2025time}, \eqref{eq:equivform_OnsagerBC_Distribution} is equivalent to its detailed expression \eqref{eq:Onsager_detail}. 
Recall from \eqref{eq:approximated_ops} that the operator $\mathcal{A}_n$ maps even distribution functions to odd ones, and vice versa.
By definition, $f_{\odd}$ and $f_{\even}$ are orthogonal, implying $\innerprod{f_{\odd}}{f_{\even}}=0$. Introducing the boundary inner product $(f,g)_{b}\coloneqq\int_{\partial\Omega}\innerprod{f}{g}\dif s$, it follows from \eqref{eq:boundary_value_from_pdf} and the modified Onsager boundary condition \eqref{eq:equivform_OnsagerBC_Distribution} that:
\begin{equation}
    \label{eq:energy}
    \begin{aligned}
    I_{\mathrm{bdry}}&=(f_0,\mathcal{A}_nf_0)_b=(f_{\odd}+f_{\even},\mathcal{A}_{n}f_{\even}+\mathcal{A}_n f_{\odd})_b\\
    &=(f_{\odd},\mathcal{A}_n f_{\even})_b+(f_{\even},\mathcal{A}_n f_{\odd})=2(f_{\odd},\mathcal{A}_n f_{\even})_b\\
    &=2(\tilde{Q}\mathcal{A}_n(f_{W}-f_{\even}),\mathcal{A}_nf_{\even})_b\\
    &=2(\tilde{Q}\mathcal{A}_nf_{W},\mathcal{A}_nf_{\even})_b-2(\tilde{Q}\mathcal{A}_n f_{\even},\mathcal{A}_n f_{\even})_b.
    \end{aligned}
\end{equation}
Since $\rho_W$ is a zeroth-order moment, it holds that $\mathcal{A}_{n}\rho_{W}=\rho_{W}\xi_{n}$, leading to $\tilde{Q}\mathcal{A}_n\rho_W=0$. Denoting $\tilde{f}_{W}\coloneqq f_{W}-\rho_{W}$, which depends only on the wall properties, one has
\begin{equation}
    \label{eq:inner_product_of_tildef}
    (\tilde{Q}\mathcal{A}_nf_{W},\mathcal{A}_nf_{\even})_b=(\tilde{Q}\mathcal{A}_{n}\tilde{f}_{W},\mathcal{A}_{n}f_{\even})_b.
\end{equation}
Since $Q$ is negative semi-definite, its modification $\tilde{Q}$, which acts as a Schur complement projecting out the $\xi_n$ direction, rigorously preserves the negative semi-definiteness. From the generalized Cauchy-Schwarz inequality, we can bound the boundary term as follows:
\begin{equation}
    \label{eq:boundary_estimate}
    \begin{aligned}
    I_{\mathrm{bdry}}&\ge(\tilde{Q}\mathcal{A}_n\tilde{f}_{W},\mathcal{A}_n\tilde{f}_{W})_b+(\tilde{Q}\mathcal{A}_n f_{\even},\mathcal{A}_n f_{\even})_b-2(\tilde{Q}\mathcal{A}_n f_{\even},\mathcal{A}_n f_{\even})_b\\
    &\ge (\tilde{Q}\mathcal{A}_n\tilde{f}_{W},\mathcal{A}_n\tilde{f}_{W})_b-(\tilde{Q}\mathcal{A}_n f_{\even},\mathcal{A}_n f_{\even})_b\ge(\tilde{Q}\mathcal{A}_{n}\tilde{f}_{W},\mathcal{A}_{n}\tilde{f}_{W})_b.
    \end{aligned}
\end{equation}
Since $\tilde{f}_W$ is fully determined by $\boldsymbol{u}_W$ and $\theta_W$, the right hand side of \eqref{eq:boundary_estimate} constitutes a negative semi-definite quadratic form in terms of $\mathbf{g}_{\text{ext}}$. Consequently, there exists a negative semi-definite matrix $M_b(\mathbf{x})$ such that:
\begin{equation}
    \label{eq:boundary_by_RHS}
    I_{\mathrm{bdry}}\ge\int_{\partial\Omega}\mathbf{g}_{\ext}^{T}M_b(\mathbf{x})\mathbf{g}_{\ext}\dif s.
\end{equation}
Ultimately, this leads us to the main entropy inequality for the bounded domain $\Omega$: 
\begin{theorem}
    \label{Thm:entropy_inequality}
Under the constraints $3k_2^2 \le k_1k_{10}$ and $25k_4^2 \le 24k_3k_7$, the entropy inequality on a bounded domain $\Omega$ yields:
\begin{equation}
    \label{eq:entropy_ineq_final}
    \frac{1}{2}\pdfFrac{}{t}\int_{\Omega}\innerprod{\boldsymbol{\mathcal{U}}}{M\boldsymbol{\mathcal{U}}}_{\mathbf{V}}\dif\mathbf{x}\le \mathcal{W}_1-\int_{\partial\Omega}\mathbf{g}_{\mathrm{ext}}^{T}M_b(\mathbf{x})\mathbf{g}_{\mathrm{ext}}\dif s,
\end{equation}
where $\mathcal{W}_1\le 0$ is the bulk entropy, detailed in \eqref{eq:W1_term}.
\end{theorem}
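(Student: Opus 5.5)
The proof assembles the pieces already established in this section; essentially no new computation is needed. The starting point is the energy identity \eqref{eq:energy_integral}, obtained by testing \eqref{eq:time_dependent_compact_R13} against $\boldsymbol{\mathcal{U}}$ with the inner product \eqref{eq:inner_prod_macro}, together with the decompositions $I_1=\mathcal{F}_1(\boldsymbol{\mathcal{U}})$ from \eqref{eq:I1} (which uses $u_n=0$) and $I_2=\mathcal{W}_1+\mathcal{F}_2$ from \eqref{eq:W1_term}--\eqref{eq:F2_term}. This gives the exact balance \eqref{eq:entropy_ineq_1},
\begin{equation*}
\frac{1}{2}\pdfFrac{}{t}\int_\Omega\innerprod{\boldsymbol{\mathcal{U}}}{M\boldsymbol{\mathcal{U}}}_{\mathbf{V}}\dif\mathbf{x}=\mathcal{W}_1-I_{\mathrm{bdry}}.
\end{equation*}
The theorem then follows from two facts: $\mathcal{W}_1\le 0$, and $I_{\mathrm{bdry}}$ is bounded below by the quadratic form in the wall data.

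\textbf{Sign of $\mathcal{W}_1$.} I would show that the integrand in \eqref{eq:W1_term} is a sum of nonnegative quadratic forms, grouped as follows.
\begin{itemize}
\item The $(\nabla\theta,\nabla\cdot\bar{\boldsymbol{\sigma}})$ block $\frac32k_1|a|^2-3k_2a\cdot b+\frac12k_{10}|b|^2$ is positive semidefinite exactly when $9k_2^2\le 3k_1k_{10}$, i.e.\ $3k_2^2\le k_1k_{10}$.
\item The $((\nabla\boldsymbol{u})_{\stf},(\nabla\bar{\boldsymbol{s}})_{\stf})$ block $k_3|A|^2+2k_4A:B+\frac{24}{25}k_7|B|^2$ is semidefinite iff $k_4^2\le\frac{24}{25}k_3k_7$, i.e.\ $25k_4^2\le24k_3k_7$.
\item The remaining terms carry coefficients $k_6,k_9\ge0$ and $l_1,l_2>0$.
\end{itemize}
Since $\mathcal{W}_1$ is minus an integral of these forms, $\mathcal{W}_1\le0$.

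\textbf{Lower bound on $I_{\mathrm{bdry}}$.} Lemma~\ref{Lem:energy_equivalent} and $\langle f^{(2)},\tilde f\rangle=0$ identify the macroscopic energy with $\frac12\int\langle\tilde f,\tilde f\rangle$. Comparing \eqref{eq:energy_for_pdf} with \eqref{eq:entropy_ineq_1} then gives the kinetic form \eqref{eq:boundary_value_from_pdf}, $I_{\mathrm{bdry}}=(f_0,\mathcal{A}_nf_0)_b$. Using the parity structure of $\mathcal{A}_n$ and Lemma~\ref{Lem:equivalence_of_ops}, which justifies the modified boundary condition \eqref{eq:equivform_OnsagerBC_Distribution}, I obtain \eqref{eq:energy}:
\begin{equation*}
I_{\mathrm{bdry}}=2(\tilde Q\mathcal{A}_n\tilde f_W,\mathcal{A}_nf_{\even})_b-2(\tilde Q\mathcal{A}_nf_{\even},\mathcal{A}_nf_{\even})_b,
\end{equation*}
after removing $\rho_W$ via $\tilde Q\xi_n=0$.

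\textbf{Main obstacle: negative semidefiniteness of $\tilde Q$.} This is the step I expect to need real care. I would write $\tilde Q=Q-\frac{Q\xi_n\otimes Q\xi_n}{\langle\xi_n,Q\xi_n\rangle}$. Because $-Q\ge0$, the generalized Cauchy--Schwarz inequality gives
\begin{equation*}
\langle Q\xi_n,f\rangle^2\le\langle\xi_n,Q\xi_n\rangle\langle f,Qf\rangle,
\end{equation*}
which is exactly $\langle f,\tilde Qf\rangle\le0$. Self-adjointness of $\tilde Q$ is evident from the formula. With $\tilde Q\le0$ established, set $a=\mathcal{A}_n\tilde f_W$ and $b=\mathcal{A}_nf_{\even}$. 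The inequality $(\tilde Q(a-b),a-b)_b\le0$ gives
\begin{equation*}
2(\tilde Qa,b)_b\ge(\tilde Qa,a)_b+(\tilde Qb,b)_b.
\end{equation*}
Therefore
\begin{equation*}
I_{\mathrm{bdry}}\ge(\tilde Qa,a)_b-(\tilde Qb,b)_b\ge(\tilde Qa,a)_b,
\end{equation*}
where the last step uses $-(\tilde Qb,b)_b\ge0$. This is \eqref{eq:boundary_estimate}.

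\textbf{Conclusion.} Since $\tilde f_W$ depends linearly and pointwise on $(\boldsymbol{u}^W,\theta^W)$, which are encoded in $\mathbf{g}_{\mathrm{ext}}$, the form $(\tilde Q\mathcal{A}_n\tilde f_W,\mathcal{A}_n\tilde f_W)$ equals $\mathbf{g}_{\mathrm{ext}}^TM_b(\mathbf{x})\mathbf{g}_{\mathrm{ext}}$ for a negative semidefinite matrix $M_b$; this is \eqref{eq:boundary_by_RHS}. Substituting $-I_{\mathrm{bdry}}\le-\int_{\partial\Omega}\mathbf{g}_{\mathrm{ext}}^TM_b\mathbf{g}_{\mathrm{ext}}\dif s$ into the energy balance yields \eqref{eq:entropy_ineq_final}.
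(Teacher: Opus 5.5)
Your proposal is correct and follows the paper's own route: the balance \eqref{eq:entropy_ineq_1}, $\mathcal{W}_1\le 0$ from the two positive semidefinite blocks, the kinetic representation of $I_{\mathrm{bdry}}$ through $\tilde Q$, and the Cauchy--Schwarz lower bound \eqref{eq:boundary_estimate}; your explicit check that $\tilde Q\le 0$ fills in a step the paper only asserts. One harmless slip, inherited from the paper: since each $\mathcal{A}_k$ is symmetric and independent of $\mathbf{x}$, $\sum_k\int_\Omega\langle f_0,\partial_{x_k}(\mathcal{A}_kf_0)\rangle\,\dif\mathbf{x}=\tfrac12(f_0,\mathcal{A}_nf_0)_b$, so the kinetic representation of $I_{\mathrm{bdry}}$ should carry a factor $\tfrac12$, which only rescales $M_b$ and does not affect the theorem.
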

\begin{proof}
    The result follows immediately from combining \eqref{eq:entropy_ineq_1} with the boundary estimate \eqref{eq:boundary_by_RHS}.
\end{proof}
\begin{remark}[H-theorem]
    The constraints $3k_2^2\le k_1k_{10}$ and 
    $25k_{4}^2\le 24k_3k_7$, first derived in \cite[Section 3.3]{lin2025time}, ensure that the bulk dissipation term $\mathcal{W}_1$ is non-positive. Consequently, the R13 system is thermodynamically consistent and complies with the second law of thermodynamics; see also the discussion of the H-theorem in \cite{ottinger2010thermodynamically}.
    Let us define the \textit{entropy density} $H$ as
    \begin{equation}
        \label{eq:entropy_density}
        H\coloneqq H_{0}-\frac{1}{2}\left(\rho^2+\frac{3}{2}\theta^2+\|\boldsymbol{u}\|^2+\frac{2}{5}\|\bar{\boldsymbol{s}}\|^2+\frac{1}{2}\|\bar{\boldsymbol{\sigma}}\|^2\right),
    \end{equation}
    where $H_0$ is an arbitrary constant. Under homogeneous boundary conditions (i.e. $\mathbf{g}_{\ext}=\mathbf{0}$), \cref{Thm:entropy_inequality} guarantees that:
    \begin{equation}
        \label{eq:second_law_of_thermodynamics}
        \frac{\dif}{\dif t}\int_{\Omega}H\dif\mathbf{x}\ge 0,
    \end{equation}
    which is consistent with the second law of thermodynamics.
\end{remark}

\section{Well-posedness of steady state problems}
\label{Sec:Steady_State}
In this section, we study the \textit{steady-state problem} associated with the R13 system, corresponding to the stationary regime of \eqref{eq:time_dependent_compact_R13} in which $\frac{\partial \bcal{U}}{\partial t} = 0$. Our main objective is to identify an appropriate variational framework and establish well-posedness via the LBB theorem \cite{boffi2013mixed}. To this end, we reformulate the stationary R13 system as mixed saddle-point problems tailored to the Maxwell and non-Maxwell molecules.



\subsection{Mixed formulations}
\label{subsec:Mixed_Form}
To study the steady-state problem by the LBB framework, we first rewrite the stationary R13 system in mixed form. Since the coercivity mechanism differs substantially between non-Maxwell and Maxwell molecules, the two regimes require different saddle-point formulations.

For steady-state problems, it is convenient to introduce the pressure variable 
$p\coloneqq\rho + \theta$ to replace the density $\rho$, which reduces the number of coupling terms in the weak form. We temporarily denote the corresponding function spaces by $\mathrm{V}_p, \mathrm{V}_\theta, \mathrm{V}_u, \mathrm{V}_s$, and $\mathrm{V}_\sigma$, their precise definitions will be specified in the subsequent subsections when the two regimes are treated separately.

To facilitate the analysis, weak forms of R13 equations will be written in different forms for Maxwell and non-Maxwell molecules. For non-Maxwell molecules ($\eta \neq 5$), we define the composite space $\mathrm{T}_1 := \mathrm{V}_{\bm{s}} \times \mathrm{V}_{\bm{u}} \times \mathrm{V}_{\bm{\sigma}} \times \mathrm{V}_{\theta}$ and partition the variables as:
\begin{equation}
\label{eq:grouping1}
\begin{cases}
\text{Unknowns:}\quad
\boldsymbol{\mathcal{S}} \coloneqq (\bar{\boldsymbol{s}}, \boldsymbol{u}, \bar{\bm{\sigma}},\theta) \in \mathrm{T}_1, 
\quad \bcal{W}\coloneqq p \in \mathrm{V}_p, \\
\text{Test Functions:}\quad
\boldsymbol{\mathcal{R}}
\coloneqq (\bar{\boldsymbol{r}},
\boldsymbol{v},\bar{\bm{\tau}},\gamma) \in \mathrm{T}_1, \quad \bcal{V}\coloneqq q\in\mathrm{V}_p.
\end{cases}
\end{equation}
The corresponding saddle-point problem reads: given $\mathscr{F}_1 \in \mathrm{T}_1^*$, find $\mathcal{S} \in \mathrm{T}_1$ and $p \in V_p$ such that
\begin{equation}
\label{eq:weak_for_nM}
\begin{aligned}
  \mathscr{A}_1(\boldsymbol{\mathcal{S}},\boldsymbol{\mathcal{R}})+ \mathscr{B}_1(\boldsymbol{\mathcal{R}},p) &= \mathscr{F}_1(\boldsymbol{\mathcal{R}}) \quad &\forall\, \boldsymbol{\mathcal{R}} \in \mathrm{T}_1,
  \\
  \mathscr{B}_1(\boldsymbol{\mathcal{S}},q) &= 0 \quad &\forall\, q \in \mathrm{V}_p,
\end{aligned}   
\end{equation}
where the associated functionals are:
\begin{equation}
    \label{eq:expression_for_operators_1}
    \begin{cases}
        \mathscr{A}_{1}(\bcal{S},\bcal{R})=[a(\bar{\bm{s}},\bar{\bm{r}})+j(\bbm{r},\bm{u})+j(\bbm{s},\bm{v})+f(\bm{u},\bm{v})]-c(\bbm{r},\bbm{\sigma})-b(\theta,\bbm{r})+e(\bm{v},\bbm{\sigma})\\
        \phantom{\mathscr{A}_{1}(\bcal{S},\bcal{R})} +[d(\bbm{\sigma},\bbm{\tau})+z(\gamma,\bbm{\sigma})+z(\theta,\bbm{\tau})+h(\theta,\gamma)]+c(\bbm{s},\bbm{\tau})+b(\gamma,\bbm{s})-e(\bm{u},\bbm{\tau}),\\
        \mathscr{B}_1(\bcal{S},q)=-g(q,\bm{u}),\quad \mathscr{F}_1(\bcal{R})=L_1(\bbm{r})+L_4(\bm{v})-L_3(\bbm{\tau})-L_2(\gamma),
    \end{cases}
\end{equation}
Adopting the Einstein summation convention over the tangential index $i \in \{1, 2\}$, the exact expressions for these underlying forms and functionals are: 
\begingroup
\small
\begin{equation}
\label{eq:detailed_bilinear_forms}
\begin{cases}
\begin{aligned}
a(\boldsymbol{s},\boldsymbol{r})\coloneqq&
\frac{24}{25}k_7\Kn(\mathrm{stf}(\nabla\boldsymbol{s}),\mathrm{stf}(\nabla\boldsymbol{r}))
+\frac45 k_6\Kn(\nabla\!\cdot\!\boldsymbol{s},\nabla\!\cdot\!\boldsymbol{r})\\
&+S_5(s_n,r_n)_b+S_1(s_{t_i},r_{t_i})_b
+\frac{4l_1}{15\Kn}(\boldsymbol{s},\boldsymbol{r}),
\end{aligned}
\\
\begin{alignedat}{2}
b(\theta,\boldsymbol{r})\coloneqq&
k_0(\theta,\nabla\!\cdot\!\boldsymbol{r})+R_1(\theta,r_n)_b,\\
c(\boldsymbol{r},\boldsymbol{\sigma})\coloneqq&
-\frac25 k_8(\boldsymbol{r},\nabla\!\cdot\!\boldsymbol{\sigma})
+R_2(r_{t_i},\sigma_{nt_i})_b+R_3(r_n,\sigma_{nn})_b,
\end{alignedat}
\\[0.3em]
\begin{aligned}
d(\boldsymbol{\sigma},\boldsymbol{\tau})\coloneqq&
k_9\Kn(\mathrm{stf}(\nabla\boldsymbol{\sigma}),\mathrm{stf}(\nabla\boldsymbol{\tau}))
+\frac12 k_{10}\Kn(\nabla\!\cdot\!\boldsymbol{\sigma},\nabla\!\cdot\!\boldsymbol{\tau})
+\frac{l_2}{2\Kn}(\boldsymbol{\sigma},\boldsymbol{\tau})
+S_3(\sigma_{nn},\tau_{nn})_b\\
+&S_6(\sigma_{t_1t_1}+\tfrac12\sigma_{nn},
      \tau_{t_1t_1}+\tfrac12\tau_{nn})_b
+S_7(\sigma_{nt_i},\tau_{nt_i})_b
+S_8(\sigma_{t_1t_2},\tau_{t_1t_2})_b,
\end{aligned}
\\[0.3em]
\begin{alignedat}{2}
e(\boldsymbol{v},\boldsymbol{\sigma})\coloneqq&
k_5(\boldsymbol{v},\nabla\!\cdot\!\boldsymbol{\sigma})
+R_4(\sigma_{nt_i},v_{t_i})_b,
\;&
f(\boldsymbol{u},\boldsymbol{v})\coloneqq&
k_3\Kn(\mathrm{stf}(\nabla\boldsymbol{u}),\mathrm{stf}(\nabla\boldsymbol{v}))
+S_2(u_{t_i},v_{t_i})_b,
\end{alignedat}
\\[0.3em]
\begin{alignedat}{2}
g(p,\boldsymbol{v})\coloneqq&
(p,\nabla\!\cdot\!\boldsymbol{v}),
\quad&j(\boldsymbol{s},\boldsymbol{v})\coloneqq&
k_4\Kn(\mathrm{stf}(\nabla\boldsymbol{s}),\mathrm{stf}(\nabla\boldsymbol{v}))
+T_1(s_{t_i},v_{t_i})_b,
\end{alignedat}
\\[0.3em]
\begin{alignedat}{2}
z(\theta,\boldsymbol{\tau})\coloneqq&
-\frac32 k_2\Kn(\nabla\theta,\nabla\!\cdot\!\boldsymbol{\tau})
+T_2(\theta,\tau_{nn})_b,
\;&
h(\theta,\gamma)\coloneqq&
\frac32 k_1\Kn(\nabla\theta,\nabla\gamma)+S_4(\theta,\gamma)_b,
\end{alignedat}
\\[0.3em]
\begin{alignedat}{2}
L_1(\boldsymbol{r})\coloneqq&
-\frac25 C_1m_{11}(\theta^W,r_n)_b
+T_1(u^W_{t_i},r_{t_i})_b,
\qquad&
L_2(\gamma)\coloneqq&
S_4(\theta^W,\gamma)_b,
\end{alignedat}
\\[0.3em]
\begin{alignedat}{2}
L_3(\boldsymbol{\tau})\coloneqq&
T_2(\theta^W,\tau_{nn})_b
-(R_4+k_5)(u^W_{t_i},\tau_{nt_i})_b,
\qquad&
L_4(\boldsymbol{v})\coloneqq&
S_2(u^W_{t_i},v_{t_i})_b,
\end{alignedat}
\end{cases}
\end{equation}
\endgroup
where coefficients $\{S_{i}\}_{i=1}^{8}$, $\{R_i\}_{i=1}^{4}$, $\{T_{i}\}_{i=1}^{2}$, and $C_1$ are related to $\{k_i\}$ and $\{m_{ij}\}$; see our supplementary material for detailed expressions.

Conversely, for Maxwell molecules, it shows $k_0=k_5=1$ and $k_1=k_2=k_3=k_4=0$, which means $\bbm{s}=\bm{s}$, $\bbm{\sigma}=\bm{\sigma}$. Since $\mathscr{A}_{1}$ isn't coercive for Maxwell molecules, we adopt the spaces $\mathrm{T}_2 \coloneqq \mathrm{V}_{\bm{\sigma}} \times \mathrm{V}_{\bm{s}} \times \mathrm{V}_p$ and $\mathrm{W} \coloneqq \mathrm{V}_{\bm{u}} \times \mathrm{V}_\theta$, regrouping the variables as:
\begin{equation}
\label{eq:grouping2}
\begin{cases}
\text{Unknowns:}\quad
\boldsymbol{\mathcal{S}} \coloneqq (\boldsymbol{\sigma}, \boldsymbol{s}, p) \in \mathrm{T}_2, 
\quad \boldsymbol{\mathcal{W}} \coloneqq (\boldsymbol{u}, \theta) \in \mathrm{W}, \\
\text{Test Functions:}\quad
\boldsymbol{\mathcal{R}}
\coloneqq (\boldsymbol{\tau},
\boldsymbol{r}, q) \in \mathrm{T}_2, \quad \boldsymbol{\mathcal{V}} \coloneqq (\boldsymbol{v}, \gamma) \in \mathrm{W}.
\end{cases}
\end{equation}
The weak formulation seeks $\bcal{S} \in \mathrm{T}_2$ and $\bcal{W} \in \mathrm{W}$ satisfying:
\begin{equation}
\label{eq:weak_for_M}
\begin{aligned}
  \mathscr{A}_2(\boldsymbol{\mathcal{S}},\boldsymbol{\mathcal{R}})+ \mathscr{B}_2(\boldsymbol{\mathcal{R}},\bcal{W}) &= \mathscr{F}_2(\boldsymbol{\mathcal{R}}) \quad &\forall\, \boldsymbol{\mathcal{R}} \in \mathrm{T}_2,
  \\
  \mathscr{B}_2(\boldsymbol{\mathcal{S}},\bcal{V}) &= 0 \quad &\forall\, \bcal{V} \in \mathrm{W},
\end{aligned}   
\end{equation}
with the block operators defined as:
\begin{equation}
    \label{eq:expression_for_operators_2}
\begin{cases}
\mathscr{A}_2(\boldsymbol{\mathcal{S}},\boldsymbol{\mathcal{R}})
\coloneqq
a(\boldsymbol{s},\boldsymbol{r})
+
c(\boldsymbol{s},\boldsymbol{\tau})
-
c(\boldsymbol{r},\boldsymbol{\sigma})
+
d(\boldsymbol{\sigma},\boldsymbol{\tau})
,
\\
\mathscr{B}_2(\boldsymbol{\mathcal{S}},\boldsymbol{\mathcal{V}})
\coloneqq
-
b(\gamma, \boldsymbol{s})
-
e(\boldsymbol{v},\boldsymbol{\sigma})
+
g(p,\boldsymbol{v})
,
\quad
\mathscr{F}_2(\boldsymbol{\mathcal{R}})
\coloneqq L_1(\boldsymbol{r})-L_3(\boldsymbol{\tau}).    
\end{cases}
\end{equation}
Finally, we introduce a unified notation for both regimes ($i \in \{1, 2\}$). Defining the global states $\bcal{U}_1 := (\bcal{S}_1, \bcal{W}_1)$ and $\bcal{U}_2 := (\bcal{S}_2, \bcal{W}_2)$ belonging to either $\mathrm{T}_1 \times V_p$ or $\mathrm{T}_2 \times \mathrm{W}$, the monolithic bilinear form $\mathcal{B}$ is formulated as:
\begin{equation}
    \label{eq:whole_bilinear_form}
    \mathcal{B}(\bcal{U}_1,\bcal{U}_2)\coloneqq\mathscr{A}_i(\bcal{S}_1,\bcal{S}_2)+\mathscr{B}_i(\bcal{S}_1,\bcal{W}_2)-\mathscr{B}_i(\bcal{S}_2,\bcal{W}_1).
\end{equation}

Before turning to the proof, we summarize the main conclusion of this section.
The precise function spaces and coefficient assumptions for the two molecular
regimes will be specified in the following subsections.

\begin{theorem}[Steady-state well-posedness]
\label{thm:steady-main}
Let $\Omega\subset\mathbb R^3$ be a bounded Lipschitz domain. Under the
assumptions stated in subsection \ref{sec:stablenM_wellposedness}
and \ref{subsec:stable_Maxwell_molecule}, 
the mixed formulations \eqref{eq:weak_for_nM} and \eqref{eq:weak_for_M} are well-posed. More precisely, they admit unique solutions $(\bcal S,p)\in \mathrm{T}_1\times \mathrm{V}_p$ and $(\bcal S,\bcal W)\in \mathrm{T}_2\times \mathrm{W}$ respectively, satisfying:
\[
    \|\bcal S\|_{\mathrm{T}_1}+\|p\|_{\mathrm{V}_p}
    \lesssim \|\mathscr{F}_1\|_{\mathrm{T}_1^*},
    \qquad
    \|\bcal S\|_{\mathrm{T}_2}+\|\bcal {W}\|_{\mathrm{W}}
    \lesssim \|\mathscr{F}_2\|_{\mathrm{T}_2^*}.
\]
\end{theorem}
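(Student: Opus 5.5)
We will apply the abstract Babuška--Brezzi (LBB) theorem separately to the two saddle-point problems \eqref{eq:weak_for_nM} and \eqref{eq:weak_for_M}. In each case four facts must be verified: boundedness of $\mathscr{A}_i$, $\mathscr{B}_i$ and $\mathscr{F}_i$; coercivity of $\mathscr{A}_i$ on $\ker\mathscr{B}_i$, or at least an inf-sup condition for $\mathscr{A}_i$ there; and the inf-sup condition for $\mathscr{B}_i$. Boundedness follows from Cauchy--Schwarz and the trace theorem once the spaces are fixed. For the non-Maxwell case we take $\mathrm{V}_{\bm s},\mathrm{V}_{\bm u},\mathrm{V}_{\bm\sigma}$ to be $H^1$-type spaces, with $u_n=0$ built into $\mathrm{V}_{\bm u}$, $\mathrm{V}_\theta=H^1$ and $\mathrm{V}_p=L^2$. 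For the Maxwell case $\mathrm{V}_{\bm u},\mathrm{V}_\theta$ become $L^2$-type spaces and the derivatives are carried by $\bm\sigma,\bm s$ through graph-norm spaces.

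\emph{Non-Maxwell case.} The key observation is that $\mathscr{A}_1(\bcal S,\bcal S)$ is exactly the entropy production. The skew terms $-c(\bbm{r},\bbm{\sigma})+c(\bbm{s},\bbm{\tau})$, $-b(\theta,\bbm r)+b(\gamma,\bbm s)$ and $e(\bm v,\bbm\sigma)-e(\bm u,\bbm\tau)$ cancel on the diagonal. What remains is $a+2j+f+d+2z+h$, which by the entropy computation of \cref{Thm:entropy_inequality} (the constraints $3k_2^2\le k_1k_{10}$, $25k_4^2\le24k_3k_7$, plus the boundary sign information from \eqref{eq:boundary_estimate}) is bounded below by
\[
\|\mathrm{stf}\nabla \bbm s\|^2+\|\nabla\cdot\bbm s\|^2+\|\mathrm{stf}\nabla\bm u\|^2+\|\mathrm{stf}\nabla\bbm\sigma\|^2+\|\nabla\theta\|^2+\|\bbm s\|^2+\|\bbm\sigma\|^2
\]
plus nonnegative boundary terms in $u_{t_i},\theta,s_n,\dots$. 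We will assume the strict versions of these inequalities and positivity of the relevant $S_i$. To upgrade this to full $H^1$ control we need Korn-type inequalities:
\begin{itemize}
\item for $\bm u$ with $u_n=0$: $\|\bm u\|_1\lesssim \|\mathrm{stf}\nabla\bm u\|+\|u_t\|_{0,\Gamma}$, which excludes conformal Killing fields;
\item for $\bbm\sigma$ (stf tensors): $\|\bbm\sigma\|_1\lesssim\|\mathrm{stf}\nabla\bbm\sigma\|+\|\bbm\sigma\|$;
\item for $\theta$: $\|\theta\|_1\lesssim\|\nabla\theta\|+\|\theta\|_{0,\Gamma}$.
\end{itemize}
These are proved by the standard compactness (Peetre--Tartar) argument, using that the kernels of the trace-free symmetric gradient operators are finite dimensional (Dain's generalized Korn inequality), and that the boundary terms vanish only on the zero element. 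This gives coercivity of $\mathscr{A}_1$ on all of $\mathrm{T}_1$. The inf-sup condition for $\mathscr{B}_1(\bcal S,q)=-(q,\nabla\cdot\bm u)$ is the classical Stokes inf-sup condition with $\bm u\cdot\bm n=0$. Since constants are not controlled, we must either set $\mathrm{V}_p=L^2_0$ or observe that the tangential boundary term handles them; we will work with $L^2_0$ and normalize $p$.

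\emph{Maxwell case.} Here $k_1=\dots=k_4=0$, so $\mathscr{A}_2$ controls $\bm s,\bm\sigma$ (with the gradient terms $k_6,k_7,k_9,k_{10}$ and boundary terms) but not $p$. We prove coercivity of $\mathscr{A}_2$ only on $\ker\mathscr{B}_2$. On that kernel, $b(\gamma,\bm s)+e(\bm v,\bm\sigma)=g(p,\bm v)$ for all $(\bm v,\gamma)$, and choosing $\bm v$ from the Stokes inf-sup gives $\|p\|\lesssim\|\bm\sigma\|_{1}+\text{bdry}$, so $p$ is controlled by $\bm\sigma$. The inf-sup condition for $\mathscr{B}_2$ in $(\bm u,\theta)\in L^2$-type spaces is obtained by constructing $\bm s$ with $\nabla\cdot\bm s=\theta$ (Bogovskii / Neumann problem) and $\bm\sigma$ with $\nabla\cdot\bm\sigma$ matching $\bm u$, for example $\bm\sigma=\mathrm{stf}\nabla\bm w$ with $\bm w$ solving an elliptic system, controlling the boundary pieces $R_1,R_4$ by the trace theorem.

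We expect the main obstacle to be the boundary-related Korn inequalities. The stf-gradient kernel (conformal Killing fields for vectors, and the corresponding finite-dimensional kernel for stf tensors) must be shown to be annihilated by the specific boundary functionals present in $a,d,f$. This requires a uniqueness argument on an arbitrary Lipschitz boundary for fields with, e.g., $u_n=0$ and $u_t=0$, and careful tracking of the signs of the $S_i$ from the supplementary coefficients. Once these are established, the LBB theorem yields the stated estimates.
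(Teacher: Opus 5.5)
Your non-Maxwell argument is essentially the paper's. You get coercivity of $\mathscr{A}_1$ on all of $\mathrm{T}_1$ from the entropy structure: the skew couplings cancel on the diagonal, leaving $J_1+J_2$, and the boundary form is nonnegative by the kinetic representation. You then use the boundary-related Korn inequality, proved by compactness plus the fact that a conformal Killing field vanishing on $\Gamma$ is zero, and the Stokes inf-sup with $\mathrm{V}_p=L^2_0$. The conformal-Killing fact is an elementary integration by parts, so it is not the obstacle you anticipate. Two small corrections:
\begin{itemize}
\item Nonnegativity of the boundary form does not by itself give $\|u_{t_i}\|_b$ when $T_1^2=S_1S_2$. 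You need $S_2>0$ and must borrow $\delta\|\bar{s}_{t_i}\|_b^2$ from the volume $H^1$ control of $\bbm{s}$ via the trace inequality. The same applies to $\theta$, with $S_4>0$ and $\delta\|\bar\sigma_{nn}\|_b^2$.
\item No boundary uniqueness argument is needed for $\bbm{s},\bbm{\sigma}$, since $l_1,l_2>0$ already supply the $L^2$ term in their Korn inequalities.
\end{itemize}

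The genuine gap is in the Maxwell case: your functional setting does not match the statement, and the inf-sup step is incomplete. The theorem fixes $\mathrm{V}_{\bm u}=H_0(\mathrm{div},\Omega)$ and $\mathrm{V}_p=L^2_0(\Omega)$, so $\|\bcal W\|_{\mathrm W}$ contains $\|\nabla\cdot\bm v\|_0$.
\begin{itemize}
\item An $L^2$-type space for $\bm u$ is incompatible with $p\in L^2_0$, because $g(p,\bm v)=(p,\nabla\cdot\bm v)$ is then undefined. This is exactly why Lewintan et al.\ need $p\in H^1$, which the paper avoids.
\item In the correct setting, your construction chooses only $\bm r$ (divergence matching $\gamma$) and $\bm\tau$ (divergence matching $\bm v$). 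It therefore produces $\|\bm v\|_0^2+\|\gamma\|_0^2$ but never $\|\nabla\cdot\bm v\|_0^2$, so the inf-sup bound in the $\mathrm W$-norm is not obtained.
\item The missing ingredient is the pressure slot of the test triple. Take $q:=\nabla\cdot\bm v$, which lies in $L^2_0$ precisely because $\bm v\cdot\bm n=0$. With $-\nabla\cdot\bm\tau=\bm v$ and $-\nabla\cdot\bm r=\gamma$ this gives $\mathscr{B}_2((\bm\tau,\bm r,q),(\bm v,\gamma))=\|\bcal V\|_{\mathrm W}^2$.
\item The boundary pieces $R_1(\gamma,r_n)_b$ and $R_4(\sigma_{nt_i},v_{t_i})_b$ cannot be handled by the trace theorem, since $\gamma\in L^2$ and $\bm v\in H_0(\mathrm{div},\Omega)$ have no such traces. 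The formulation makes sense only because these terms are absent for Maxwell molecules: with $k_1=\dots=k_4=0$, the couplings of $\gamma$ with $\nabla\cdot\bm s$ and of $\bm v$ with $\nabla\cdot\bm\sigma$ carry no boundary terms. Hence $R_1=R_4=0$ and $\mathscr{B}_2$ is purely volumetric, as the paper uses.
\item Taking $\bm\tau=\stf\nabla\bm w$ from an elliptic system posed in $\Omega$ requires $H^2$-regularity, which fails on general Lipschitz domains. Bogovskii alone requires zero mean. Use instead bounded right inverses of the divergence on $H^1(\Omega;\mathbb{R}^{3\times3}_{\stf})$ and $H^1(\Omega;\mathbb{R}^3)$ onto $L^2$, as the paper does via Lewintan et al.; for example, solve on a ball containing $\Omega$ and restrict.
\end{itemize}
Your control of $p$ on $\ker\mathscr{B}_2$ via the Stokes inf-sup is equivalent to the paper's $\nabla p=-\nabla\cdot\bm\sigma$ plus Poincar\'e, and is fine.
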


The remainder of this section is devoted to the proof of
Theorem~\ref{thm:steady-main}. We first establish the boundary-related
Korn-type inequality needed for the coercivity estimates, and then verify the
Babu\v{s}ka--Brezzi conditions separately for the non-Maxwell and Maxwell cases.


\subsection{Boundary-related Korn-type inequalities} 
As the main analytic tool for the coercivity estimates in the mixed formulations, we introduce a specialized Korn-type inequality adapted to our specific boundary conditions. 
For spatial dimensions $d \ge 3$, Lewintan et al. \cite{lewintan2025wellposedness} established a Korn-type inequality for vector fields $\boldsymbol{u} \in H^1(\Omega; \mathbb{R}^d)$ as follows:
\begin{equation}
    \label{eq:Korn_type_inequality}
    \|\boldsymbol{u}\|_{0}^2+\|\operatorname{stf}\nabla\boldsymbol{u}\|_{0}^2\gtrsim \|\boldsymbol{u}\|_{1}^2.
\end{equation}
Our goal is to extend the estimate in \eqref{eq:Korn_type_inequality} by replacing the $L^2$ norm $\|\boldsymbol{u}\|_0$ with the boundary norm of $\boldsymbol{u}$, defined as $\|\boldsymbol{u}\|_b := \left(\int_{\partial\Omega} |\boldsymbol{u}|^2 \dif s\right)^{\frac{1}{2}}$. To this end, we first introduce the concept of the \textit{conformal Killing space} and establish a key property associated with it.
\begin{lemma}
    \label{Lem:CK_as_Level_Set}
    Let $\Omega \subset \mathbb{R}^d$ ($d \ge 3$) be a bounded Lipschitz domain. Define the \textit{conformal Killing space} as: 
    \begin{equation} 
    \label{eq:conformal_killing_map}
    \mathbf{CK}\coloneqq\{\boldsymbol{u}(\mathbf{x}):
        \boldsymbol{u}(\mathbf{x}) = \mathbf{a} + \lambda\mathbf{x} + A\mathbf{x} + (2(\mathbf{b}\cdot\mathbf{x})\mathbf{x} - \|\mathbf{x}\|^2\mathbf{b})\},
    \end{equation}
    where $\mathbf{a}, \mathbf{b} \in \mathbb{R}^d$, $\lambda \in \mathbb{R}$, and $A \in \mathbb{R}^{d\times d}$ is a skew-symmetric matrix. If $\boldsymbol{\varphi}\in \mathbf{CK}$ satisfies the homogeneous boundary condition $\boldsymbol{\varphi}|_{\partial\Omega}=\mathbf{0}$, then $\boldsymbol{\varphi}=\mathbf{0}$.
\end{lemma}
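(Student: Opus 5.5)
The plan is to exploit the fact that every $\boldsymbol{\varphi}\in\mathbf{CK}$ is a polynomial vector field of degree at most two whose zero set is rigid. Since $\partial\Omega$ is the boundary of a bounded Lipschitz domain in $\mathbb{R}^d$ with $d\ge 3$, it is a $(d-1)$-dimensional set that cannot be contained in a small algebraic variety. So the strategy is to show that the zero set of a nonzero conformal Killing field is at most a single point or, in degenerate cases, a sphere or affine subspace of codimension at least two, and therefore cannot contain $\partial\Omega$.

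First I will reduce to a scalar quadratic. Writing $\boldsymbol{\varphi}=\mathbf{a}+\lambda\mathbf{x}+A\mathbf{x}+2(\mathbf{b}\cdot\mathbf{x})\mathbf{x}-\|\mathbf{x}\|^2\mathbf{b}$, I take the inner product with $\mathbf{x}$ and use $\mathbf{x}\cdot A\mathbf{x}=0$. This gives
\begin{equation*}
\mathbf{x}\cdot\boldsymbol{\varphi}=\mathbf{a}\cdot\mathbf{x}+\lambda\|\mathbf{x}\|^2+\|\mathbf{x}\|^2(\mathbf{b}\cdot\mathbf{x}),
\end{equation*}
which must vanish on $\partial\Omega$. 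I will also use the divergence $\nabla\cdot\boldsymbol{\varphi}=d\lambda+2(d-1)\,\mathbf{b}\cdot\mathbf{x}$, together with the identity $\int_\Omega\nabla\cdot\boldsymbol{\varphi}\,\dif\mathbf{x}=\int_{\partial\Omega}\boldsymbol{\varphi}\cdot\bm{n}\,\dif s=0$. Applying the same integration-by-parts idea to $\int_\Omega\nabla\boldsymbol{\varphi}\,\dif\mathbf{x}=\int_{\partial\Omega}\boldsymbol{\varphi}\otimes\bm{n}\,\dif s=0$ gives moment identities for all the coefficients. Alternatively, there is a cleaner route: $\boldsymbol{\varphi}$ vanishes on $\partial\Omega$, and each component is a polynomial. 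Because $\partial\Omega$ is Lipschitz, it contains a relatively open piece of a Lipschitz graph, and hence is not contained in any proper algebraic subset of dimension less than $d-1$. A polynomial of degree at most two vanishing on $\partial\Omega$ must therefore vanish on the Zariski closure of $\partial\Omega$, which contains a hypersurface piece.

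The main step is a case analysis of the zero set. If $\mathbf{b}\neq\mathbf{0}$, I will apply the inversion $\mathbf{x}\mapsto\mathbf{x}/\|\mathbf{x}\|^2$, possibly after translating so that the origin lies off $\overline\Omega$, which keeps the field within $\mathbf{CK}$. This maps special conformal fields to affine conformal fields $\mathbf{a}'+\lambda'\mathbf{x}+A'\mathbf{x}$, with zeros preserved away from the origin. For an affine field $\mathbf{a}+(\lambda I+A)\mathbf{x}$ the matrix $\lambda I+A$ has eigenvalues $\lambda\pm i\mu_j$. If $\lambda\neq 0$ the matrix is invertible, so the zero set is a single point. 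If $\lambda=0$ the zero set is an affine subspace $\ker A$ shifted, and since $A$ is skew its rank is even, so for a nonzero field the kernel has dimension at most $d-2$. In every case the zero set of a nonzero field has dimension at most $d-2$, or is a point.

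Finally, I will argue that $\partial\Omega$ has Hausdorff dimension $d-1$, since it is locally a Lipschitz graph with positive $(d-1)$-measure. It therefore cannot lie in a set of dimension at most $d-2$, which forces $\boldsymbol{\varphi}\equiv\mathbf{0}$. I expect the main obstacle to be the case $\mathbf{b}\neq\mathbf{0}$: I must check carefully that the Kelvin-type inversion maps $\mathbf{CK}$ to itself and preserves the zero set. If this proves messy, I will instead solve $\boldsymbol{\varphi}(\mathbf{x})=\mathbf{0}$ directly by writing it as $M(\mathbf{x})\mathbf{x}=-\mathbf{a}$ with the conformal structure and bound the dimension of the solution set.
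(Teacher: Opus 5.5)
The step that fails is the one you flagged yourself: the case $\mathbf b\neq\mathbf 0$. The inversion $\iota(\mathbf x)=\mathbf x/\|\mathbf x\|^2$ does preserve $\mathbf{CK}$. However, computing the pushforward with $D\iota(\mathbf x)=\|\mathbf x\|^{-2}\bigl(I-2\mathbf x\mathbf x^T/\|\mathbf x\|^2\bigr)$ shows that it \emph{exchanges} the translation and special-conformal parts: $(\mathbf a,\lambda,A,\mathbf b)\mapsto(-\mathbf b,-\lambda,A,-\mathbf a)$. After you move the origin to $\mathbf x_0$, the constant term of the field is $\boldsymbol\varphi(\mathbf x_0)$, so the inverted field is affine if and only if $\boldsymbol\varphi(\mathbf x_0)=\mathbf 0$. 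Putting the origin off $\overline\Omega$ gives no such guarantee, so the claimed reduction to $\mathbf a'+\lambda'\mathbf x+A'\mathbf x$ is false as stated. For instance, $\mathbf e_3+2x_3\mathbf x-\|\mathbf x\|^2\mathbf e_3+\mu(-x_2,x_1,0)^T$ with $\mu\neq 0$ lies in $\mathbf{CK}$ and has no zeros in $\mathbb R^3$, so no choice of inversion center makes it affine. Your Zariski-closure remark cannot replace this step, because a degree-two polynomial such as $\|\mathbf x\|^2-1$ does vanish on an entire hypersurface. All the weight therefore rests on the case analysis.

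The missing idea is to center the inversion at a \emph{zero} of $\boldsymbol\varphi$, and the hypothesis supplies one: take $\mathbf x_0\in\partial\Omega$. Then $\mathbf y\mapsto\boldsymbol\varphi(\mathbf y+\mathbf x_0)$ lies in $\mathbf{CK}$ with zero constant term and the same $\mathbf b$. Its pushforward under $\iota$ is therefore the affine field $-\mathbf b+(-\tilde\lambda I+\tilde A)\mathbf y$ for some $\tilde\lambda\in\mathbb R$ and skew $\tilde A$. This field vanishes on $\iota\bigl((\partial\Omega-\mathbf x_0)\setminus\{\mathbf 0\}\bigr)$, which has positive $\mathcal H^{d-1}$-measure because $\iota$ is locally bi-Lipschitz away from the origin. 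Your even-rank argument then forces the affine field, and hence $\boldsymbol\varphi$, to vanish, with no case split on $\mathbf b$. Repaired this way, your route actually proves more than the lemma: vanishing on any set of positive $\mathcal H^{d-1}$-measure suffices. It is, however, much heavier than the paper's proof, which uses only $\operatorname{stf}\nabla\boldsymbol\varphi=\mathbf 0$. The paper expands $|\operatorname{stf}\nabla\boldsymbol\varphi|^2=\frac12|\nabla\boldsymbol\varphi|^2+\frac12\nabla\boldsymbol\varphi:\nabla^T\boldsymbol\varphi-\frac1d(\nabla\cdot\boldsymbol\varphi)^2$. It then integrates by parts, using $\boldsymbol\varphi|_{\partial\Omega}=\mathbf 0$, to convert $\int_\Omega\nabla\boldsymbol\varphi:\nabla^T\boldsymbol\varphi$ into $\int_\Omega(\nabla\cdot\boldsymbol\varphi)^2$. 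This gives $\frac12\|\nabla\boldsymbol\varphi\|_0^2+(\frac12-\frac1d)\|\nabla\cdot\boldsymbol\varphi\|_0^2=0$, so $\boldsymbol\varphi$ is constant and hence zero.
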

\begin{remark}
    \label{rmk:CK_space_prop}
    Defining the operator $\mathcal{E}:=\stf~\nabla$, it is established that $\mathbf{CK} = \ker \mathcal{E}$ for $d \ge 3$. A rigorous proof can be found in \cite[Proposition 2.5]{schirra2012new}.
\end{remark}
\begin{proof}
    By \cref{rmk:CK_space_prop}, $\boldsymbol{\varphi}\in\mathbf{CK}$ implies $\stf~\nabla\boldsymbol{\varphi}=\mathbf{0}$. Denoting the symmetric gradient by $\epsilon(\boldsymbol{\varphi})\coloneqq \frac{1}{2}(\nabla\boldsymbol{\varphi}+\nabla^{T}\boldsymbol{\varphi})$, we can express the trace-free part as $\stf~\nabla\boldsymbol{\varphi}=\epsilon(\boldsymbol{\varphi})-\frac{1}{d}(\nabla\cdot\boldsymbol{\varphi})I$. Consequently, we have:\vspace{-1ex}
    \begin{equation}
        \label{eq:stfvarphi_check}
        \begin{aligned}
        \|\stf~\nabla\boldsymbol{\varphi}\|_{F}^2&=\left(\epsilon(\boldsymbol{\varphi})-\frac{1}{d}(\nabla\cdot\boldsymbol{\varphi})I\right)\colon\left(\epsilon(\boldsymbol{\varphi})-\frac{1}{d}(\nabla\cdot\boldsymbol{\varphi})I\right)\\
        &=\|\epsilon(\boldsymbol{\varphi})\|_{F}^2-\frac{1}{d}(\nabla\cdot\boldsymbol{\varphi})^2=\left\|\frac{1}{2}(\nabla\boldsymbol{\varphi}+\nabla^{T}\boldsymbol{\varphi})\right\|_{F}^2-\frac{1}{d}(\nabla\cdot\boldsymbol{\varphi})^2\\
        &=\frac{1}{2}\|\nabla\boldsymbol{\varphi}\|_{F}^2+\frac{1}{2}(\nabla\boldsymbol{\varphi}\colon\nabla^{T}\boldsymbol{\varphi})-\frac{1}{d}(\nabla\cdot\boldsymbol{\varphi})^2=0.
        \end{aligned}
    \end{equation}
To evaluate the cross term  $\nabla\boldsymbol{\varphi}\colon\nabla^{T}\boldsymbol{\varphi}$, we apply integration by parts. Utilizing the boundary condition $\boldsymbol{\varphi}|_{\partial\Omega}=\mathbf{0}$, we obtain:\vspace{-1ex}
\begin{equation}
    \label{eq:cross_term}
    \begin{aligned}
        \int_{\Omega}\nabla\boldsymbol{\varphi}\colon\nabla^{T}\boldsymbol{\varphi}\dif\mathbf{x}&=\sum_{i,j=1}^{d}\int_{\Omega}\partial_{j}\varphi_i\partial_{i}\varphi_j\dif\mathbf{x}=\sum_{i,j=1}^{d}\int_{\Omega}(\partial_i(\varphi_j(\partial_j\varphi_i))-\varphi_j\partial_i\partial_j\varphi_i)\dif\mathbf{x}\\
        &=-\sum_{i,j=1}^{d}\int_{\Omega}\varphi_j\partial_i\partial_j\varphi_i\dif\mathbf{x}=-\sum_{i,j=1}^{d}\int_{\Omega}\varphi_j\partial_j\partial_i\varphi_i\dif\mathbf{x}\\
        &=\sum_{i,j=1}^{d}\int_{\Omega}(\partial_i\varphi_i)(\partial_j\varphi_j)\dif\mathbf{x}=\int_{\Omega}|\nabla\cdot\boldsymbol{\varphi}|^2\dif\mathbf{x}.
    \end{aligned}
\end{equation}
Integrating \eqref{eq:stfvarphi_check} over $\Omega$ and substituting \eqref{eq:cross_term} yields:
\begin{equation}
    \frac{1}{2}\int_{\Omega}\|\nabla\boldsymbol{\varphi}\|_{F}^2\dif\mathbf{x}+\left(\frac{1}{2}-\frac{1}{d}\right)\int_{\Omega}(\nabla\cdot\boldsymbol{\varphi})^2\dif\mathbf{x}=0.
\end{equation}
Since $d\ge 3$, we have $\frac{1}{2}-\frac{1}{d}>0$, this implies $\nabla\boldsymbol{\varphi}=\mathbf{0}$. Since $\boldsymbol{\varphi}|_{\partial\Omega}=\boldsymbol{0}$, we conclude that $\boldsymbol{\varphi}=\mathbf{0}$ in $\Omega$.
\end{proof}

With these preliminaries, we are now positioned to establish the \textit{boundary-related Korn-type inequality}.
\begin{theorem}
    \label{Thm:Boundary_related_Korn}
     Let $\Omega \subset \mathbb{R}^d$ be a bounded Lipschitz domain with $d \ge 3$. For $\boldsymbol{u}\in H^1(\Omega;\mathbb{R}^d)$, the following Korn-type Inequality holds:
    \begin{equation}
    \label{eq:Korn_type_rel_to_bdry}
    \|\boldsymbol{u}\|_{b}^2+\|\operatorname{stf}\nabla \boldsymbol{u}\|_{0}^2\gtrsim \|\boldsymbol{u}\|_1^2
    \end{equation}
\end{theorem}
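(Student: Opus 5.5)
We argue by contradiction, using the standard compactness (Peetre--Tartar type) argument, with the Korn-type inequality \eqref{eq:Korn_type_inequality} of Lewintan et al.\ and \cref{Lem:CK_as_Level_Set} as the two main ingredients. Suppose \eqref{eq:Korn_type_rel_to_bdry} fails. Then there is a sequence $\{\boldsymbol{u}_k\}\subset H^1(\Omega;\mathbb{R}^d)$ with $\|\boldsymbol{u}_k\|_1=1$ and
\[
\|\boldsymbol{u}_k\|_b^2+\|\operatorname{stf}\nabla\boldsymbol{u}_k\|_0^2\le \frac1k .
\]
Since $\Omega$ is bounded and Lipschitz, Rellich's theorem lets us pass to a subsequence with $\boldsymbol{u}_k\rightharpoonup\boldsymbol{u}$ weakly in $H^1$ and $\boldsymbol{u}_k\to\boldsymbol{u}$ strongly in $L^2(\Omega)$.

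\textbf{Step 1: strong $H^1$ convergence.} Apply \eqref{eq:Korn_type_inequality} to the differences $\boldsymbol{u}_k-\boldsymbol{u}_m$:
\[
\|\boldsymbol{u}_k-\boldsymbol{u}_m\|_1^2\lesssim \|\boldsymbol{u}_k-\boldsymbol{u}_m\|_0^2+\|\operatorname{stf}\nabla\boldsymbol{u}_k\|_0^2+\|\operatorname{stf}\nabla\boldsymbol{u}_m\|_0^2 .
\]
The first term tends to zero by the $L^2$ convergence, and the last two are at most $1/k+1/m$. Hence $\{\boldsymbol{u}_k\}$ is Cauchy in $H^1$, so $\boldsymbol{u}_k\to\boldsymbol{u}$ strongly in $H^1$ and $\|\boldsymbol{u}\|_1=1$.

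\textbf{Step 2: identify the limit.} The map $\boldsymbol{u}\mapsto\operatorname{stf}\nabla\boldsymbol{u}$ is continuous from $H^1$ to $L^2$, so $\operatorname{stf}\nabla\boldsymbol{u}=\mathbf{0}$. By \cref{rmk:CK_space_prop}, this gives $\boldsymbol{u}\in\mathbf{CK}$, a finite-dimensional space of smooth (polynomial) fields. The trace operator is continuous from $H^1(\Omega)$ to $L^2(\partial\Omega)$, so $\|\boldsymbol{u}\|_b=\lim_k\|\boldsymbol{u}_k\|_b=0$, i.e.\ $\boldsymbol{u}|_{\partial\Omega}=\mathbf{0}$.

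\textbf{Step 3: contradiction.} \cref{Lem:CK_as_Level_Set} now forces $\boldsymbol{u}=\mathbf{0}$, which contradicts $\|\boldsymbol{u}\|_1=1$. The hidden constant is not explicit but depends only on $\Omega$ and $d$.

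\textbf{Main obstacle: applying \cref{Lem:CK_as_Level_Set} to the limit.} That lemma's proof integrates by parts twice, which needs more than $H^1$ regularity. This is harmless here: every element of $\mathbf{CK}$ is a quadratic polynomial, so it is smooth up to $\partial\Omega$, and its vanishing trace is a classical boundary condition. Because the ingredients used are only Rellich compactness, the trace theorem, and the two cited earlier results, the rest of the argument is routine.
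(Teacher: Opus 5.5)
Your proposal is correct and follows the paper's proof essentially step for step. Both arguments use contradiction, extract an $L^2$-convergent subsequence via Rellich, and apply the Lewintan Korn-type inequality to differences to get a Cauchy sequence in $H^1$. Both then identify the limit as a conformal Killing field with zero trace, which the conformal Killing lemma forces to vanish, contradicting $\|\boldsymbol{u}\|_1=1$. Your extra remark on the regularity needed for that lemma (conformal Killing fields are polynomials, hence smooth up to $\partial\Omega$) is a harmless refinement the paper leaves implicit.
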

\begin{proof}
    We proceed by contradiction. Suppose  \eqref{eq:Korn_type_rel_to_bdry} does not hold, then there exists a sequence $\{\boldsymbol{u}_n\}\subset H^{1}(\Omega;\mathbb{R}^{d})$ such that $\|\boldsymbol{u}_n\|_{1}=1$ and 
    \begin{equation}
    \label{eq:Counter}
    \lim_{n\rightarrow\infty} (\|\boldsymbol{u}_n\|_{b}^2+\|\operatorname{stf}\nabla\boldsymbol{u}_n\|_{0}^2)=0.
    \end{equation}
    By the Rellich–Kondrachov theorem \cite[Section 5.7]{evans2010partial}, the embedding $H^{1}(\Omega;\mathbb{R}^{d})\subset\subset L^2(\Omega;\mathbb{R}^d)$, is compact. Thus, there exists a subsequence $\{\boldsymbol{u}_{n_k}\}$ that converges strongly to some limit $\boldsymbol{u}_0$ in $L^2(\Omega; \mathbb{R}^d)$. Consequently, $\{\boldsymbol{u}_{n_k}\}$ is a Cauchy sequence in $L^2(\Omega; \mathbb{R}^d)$.
    
    Furthermore, utilizing the classical Korn-type inequality \eqref{eq:Korn_type_inequality}, for $k\neq l$ we have:
    $$
    \|\boldsymbol{u}_{n_k}-\boldsymbol{u}_{n_l}\|_{1}\lesssim \|\boldsymbol{u}_{n_k}-\boldsymbol{u}_{n_l}\|_0+\|\operatorname{stf}\nabla(\boldsymbol{u}_{n_k}-\boldsymbol{u}_{n_l})\|_{0}.
    $$
    Since $\lim_{n\rightarrow\infty}\|\text{stf}~\nabla\boldsymbol{u}_n\|_{0}=0$, it follows that 
    $\lim_{k,l\rightarrow\infty}\|\text{stf}~\nabla(\boldsymbol{u}_{n_k}-\boldsymbol{u}_{n_l})\|_{0}=0$. Combined with the fact that $\{\boldsymbol{u}_{n_k}\}$ is Cauchy in $L^2(\Omega; \mathbb{R}^d)$ (which implies $\lim_{k,l \to \infty} \|\boldsymbol{u}_{n_k} - \boldsymbol{u}_{n_l}\|_0 = 0$), we deduce that $\{\boldsymbol{u}_{n_k}\}$ is a Cauchy sequence in $H^1(\Omega; \mathbb{R}^d)$. Therefore, $\boldsymbol{u}_{n_k}$ converges strongly to $\boldsymbol{u}_0$ in $H^1(\Omega; \mathbb{R}^d)$.

    Passing to the limit in \eqref{eq:Counter} yields
    \begin{equation*}
        \|\boldsymbol{u}_0\|_{b}^2+\|\text{stf}~\nabla\boldsymbol{u}_{0}\|_{0}^2=0\quad\Rightarrow \quad \operatorname{stf}\nabla\boldsymbol{u}_0=0,\quad \boldsymbol{u}_0=0\text{ on }\partial\Omega.
    \end{equation*}
    According to \cref{rmk:CK_space_prop}, $\stf~\nabla\boldsymbol{u}_0=0$ implies $\boldsymbol{u}_{0}\in\mathbf{CK}$. Since $\boldsymbol{u}_{0}=0$ on $\partial\Omega$, \cref{Lem:CK_as_Level_Set} guarantees that $\boldsymbol{u}_{0}=\mathbf{0}$ in $\Omega$. The strong convergence in $H^1$ ensures that $\|\boldsymbol{u}_0\|_1 = \lim_{k \to \infty} \|\boldsymbol{u}_{n_k}\|_1 = 1$, which is a contradiction. Thus, the Korn-type inequality \eqref{eq:Korn_type_rel_to_bdry} holds for $d\ge 3$. 
\end{proof}

\subsection{Non-Maxwell gas molecules}
\label{sec:stablenM_wellposedness}
We first treat the non-Maxwell case, where the full regularized R13 system contains second-order dissipative terms for all variables involved in the coercivity argument. Accordingly, the natural setting is an \(H^1\)-based mixed formulation for \(\boldsymbol{s},\boldsymbol{u},\boldsymbol{\sigma}\), and \(\theta\). We establish well-posedness for our R13 system under some constraints on the parameters $k_{i}$. First, we introduce the corresponding function spaces for the weak form \eqref{eq:weak_for_nM}:
\begin{equation}
     \label{eq:continuous_spaces}
\begin{aligned}
    &\mathrm{V}_{\boldsymbol{\sigma}} \coloneqq H^1(\Omega;\mathbb{R}_{\stf}^{3\times 3}),\quad \mathrm{V}_{\boldsymbol{s}} \coloneqq H^1(\Omega;\mathbb{R}^3),\\ 
    &\mathrm{V}_{\boldsymbol{u}} \coloneqq \tilde{H}^1(\Omega;\mathbb{R}^3),\quad \mathrm{V}_{\theta} \coloneqq H^1(\Omega;\mathbb{R}),\quad \mathrm{V}_{p}\coloneqq L_{0}^{2}(\Omega;\mathbb{R}),
\end{aligned}
\end{equation}
where $\tilde{H}^{1}(\Omega;\mathbb{R}^{3})\coloneqq\{\boldsymbol{u}\in H^1(\Omega;\mathbb{R}^{3}):u_{n}=0\text{ on }\partial\Omega\}$. Since the operator $\mathscr{A}_{1}$ and $\mathscr{B}_{1}$ are both continuous (see \cite[Appendix A]{lewintan2025wellposedness}), we just need to show the coercivity of $\mathscr{A}_{1}$ and the inf-sup condition for $\mathscr{B}_{1}$.
\begin{theorem}[Inf-Sup Condition]
    \label{Thm:inf_sup_nonMax}
    For a bounded Lipschitz domain $\Omega \subset \mathbb{R}^3$, the bilinear form $\mathscr{B}_1$ satisfies the following inf-sup condition:
    \begin{equation}
        \label{eq:infsup_condition4g}
        \sup_{\boldsymbol{\mathcal{S}}\in \mathrm{T}_1}\frac{\left|\mathscr{B}_{1}(\boldsymbol{\mathcal{S}},q)\right|}{\|\boldsymbol{\mathcal{S}}\|_{\mathrm{T}_1}}\gtrsim\|q\|_{0},\quad\forall q\in L_{0}^2(\Omega).
    \end{equation}
\end{theorem}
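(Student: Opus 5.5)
Since $\mathscr{B}_1(\boldsymbol{\mathcal{S}},q)=-g(q,\boldsymbol{u})=-(q,\nabla\cdot\boldsymbol{u})$ involves only the velocity component of $\boldsymbol{\mathcal{S}}=(\bar{\boldsymbol{s}},\boldsymbol{u},\bar{\boldsymbol{\sigma}},\theta)$, the plan is to reduce the claim to the classical divergence inf-sup (Stokes) condition. We restrict the supremum to elements of the form $\boldsymbol{\mathcal{S}}=(\mathbf{0},\boldsymbol{v},\mathbf{0},0)$ with $\boldsymbol{v}\in\mathrm{V}_{\boldsymbol{u}}=\tilde{H}^1(\Omega;\mathbb{R}^3)$. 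For these, $\|\boldsymbol{\mathcal{S}}\|_{\mathrm{T}_1}=\|\boldsymbol{v}\|_1$, so
\begin{equation*}
\sup_{\boldsymbol{\mathcal{S}}\in \mathrm{T}_1}\frac{|\mathscr{B}_{1}(\boldsymbol{\mathcal{S}},q)|}{\|\boldsymbol{\mathcal{S}}\|_{\mathrm{T}_1}}\ \ge\ \sup_{\boldsymbol{v}\in \tilde{H}^1(\Omega;\mathbb{R}^3)}\frac{|(q,\nabla\cdot\boldsymbol{v})|}{\|\boldsymbol{v}\|_{1}}.
\end{equation*}

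Next, we bound the right-hand side from below using $H_0^1\subset\tilde H^1$. Any $\boldsymbol{v}\in H_0^1(\Omega;\mathbb{R}^3)$ has vanishing trace, so in particular $v_n=0$ on $\partial\Omega$, and hence $H_0^1(\Omega;\mathbb{R}^3)\subset\tilde{H}^1(\Omega;\mathbb{R}^3)$. The supremum over $\tilde H^1$ therefore dominates the supremum over $H_0^1$. It thus suffices to show that for every $q\in L^2_0(\Omega)$ there is $\boldsymbol{v}_q\in H^1_0(\Omega;\mathbb{R}^3)$ with $\nabla\cdot\boldsymbol{v}_q=q$ and $\|\boldsymbol{v}_q\|_1\le C_\Omega\|q\|_0$.

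This is exactly the Bogovskii / Ne\v{c}as result: on a bounded Lipschitz domain, the divergence $\nabla\cdot: H_0^1(\Omega;\mathbb{R}^3)\to L_0^2(\Omega)$ is surjective and has a bounded right inverse (see, e.g., \cite{boffi2013mixed}). Taking $\boldsymbol{v}=\boldsymbol{v}_q$ gives $(q,\nabla\cdot\boldsymbol{v}_q)=\|q\|_0^2$, and hence the supremum is at least $\|q\|_0^2/(C_\Omega\|q\|_0)=C_\Omega^{-1}\|q\|_0$, which is \eqref{eq:infsup_condition4g}.

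The only substantive ingredient is the existence of the bounded right inverse of the divergence on Lipschitz domains, and this is cited rather than reproved. Two points need care. First, $q$ must have zero mean; this holds because $\mathrm{V}_p=L^2_0$, and it is precisely the compatibility condition required by the divergence theorem. Second, the $\mathrm{T}_1$-norm must be the product $H^1$ norm, so that zero components contribute nothing.
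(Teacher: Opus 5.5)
Your proof is correct and is essentially the paper's argument. The paper also tests with $(\mathbf{0},\boldsymbol{u}_q,\mathbf{0},0)$, where $\boldsymbol{u}_q\in H_0^1(\Omega;\mathbb{R}^3)\subset\mathrm{V}_{\boldsymbol{u}}$ is supplied by the bounded right inverse of the divergence on Lipschitz domains, which the paper cites from Galdi's monograph (Lemma III.3.1).
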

\begin{proof}
    Recall that $\mathscr{B}_1(\boldsymbol{\mathcal{S}}, q) = \int_\Omega q\nabla\cdot\boldsymbol{u} \mathrm{d}\mathbf{x}$, from \cite[Lemma III.3.1]{galdi2011introduction} there exists $C>0$ such that: 
    $$\forall q\in L_{0}^{2}(\Omega),\quad\exists \boldsymbol{u}_{q}\in H_{0}^{1}(\Omega;\mathbb{R}^3)\subset\mathrm{V}_{\boldsymbol{u}}
    \text{ s.t. }\quad\frac{|\int_{\Omega}q\nabla\cdot\boldsymbol{u}_{q}\dif\mathbf{x}|}{\|\boldsymbol{u}_{q}\|_{1}}\ge C\|q\|_{0}.
    $$
    For $q \in L_0^2(\Omega)$, constructing the composite test function $\boldsymbol{\mathcal{S}}\coloneqq(\mathbf{0}, \boldsymbol{u}_q, \mathbf{0}, 0) \in \mathrm{T}_1$ and substituting it into the supremum directly yields \eqref{eq:infsup_condition4g}.
\end{proof}

We now examine the coercivity of the bilinear form $\mathscr{A}_1$. Recalling from \eqref{eq:expression_for_operators_1}, we can decompose $\mathscr{A}_1(\boldsymbol{\mathcal{S}}, \boldsymbol{\mathcal{S}})$ as: 
\begin{equation}
    \label{eq:expression_for_A}
    \begin{aligned}
    &\mathscr{A}_{1}(\boldsymbol{\mathcal{S}},\boldsymbol{\mathcal{S}})=\underbrace{(a(\bar{\boldsymbol{s}},\bar{\boldsymbol{s}})+2j(\bar{\boldsymbol{s}},\boldsymbol{u})+f(\boldsymbol{u},\boldsymbol{u}))}_{J_1}+\underbrace{(d(\bar{\boldsymbol{\sigma}},\bar{\boldsymbol{\sigma}})+2z(\theta,\bar{\boldsymbol{\sigma}})+h(\theta,\theta))}_{J_2}.
    \end{aligned}
\end{equation}
By the definitions of these bilinear forms \eqref{eq:detailed_bilinear_forms}, $\mathscr{A}_1(\boldsymbol{\mathcal{S}},\boldsymbol{\mathcal{S}})$ contains both integrals on $\Omega$ and $\partial \Omega$.
Below, we collect all boundary integrals in $\mathscr{A}_1(\boldsymbol{\mathcal{S}},\boldsymbol{\mathcal{S}})$ and define the sum of these terms as $\mathscr{A}_{\mathrm{bdry}}(\boldsymbol{\mathcal{S}},\boldsymbol{\mathcal{S}})$, whose precise definition is
\begin{equation}
    \label{eq:Coercivity_on_bdry}
    \begin{aligned}
    \mathscr{A}_{\mathrm{bdry}}(\boldsymbol{\mathcal{S}},\boldsymbol{\mathcal{S}})&=(S_1\textstyle\sum_{i=1}^2\|\bar{s}_{t_i}\|_{b}^2+S_2\textstyle\sum_{i=1}^2\|u_{t_i}\|_{b}^2+2T_1\textstyle\sum_{i=1}^2(\bar{s}_{t_i},u_{t_i})_{b})\\
    &+(S_3\|\bar{\sigma}_{nn}\|_{b}^2+S_4\|\theta\|_{b}^2+2T_2(\bar{\sigma}_{nn},\theta)_{b})\\
    &+S_5\|s_n\|_{b}^2+S_{6}\left\|\bar{\sigma}_{t_1t_1}+\frac{1}{2}\bar{\sigma}_{nn}\right\|_{b}^2+S_7\textstyle\sum_{i=1}^2\|\bar{\sigma}_{nt_i}\|_{b}^2+S_8\|\bar{\sigma}_{t_1t_2}\|_{b}^2.
\end{aligned}
\end{equation}
We will first show that $\mathscr{A}_{\mathrm{bdry}}(\boldsymbol{\mathcal{S}},\boldsymbol{\mathcal{S}})$ is nonnegative:

\begin{lemma}
    \label{Lem:coefficients_on_bdry}
    The coefficients in the definition of $\mathscr{A}_{\mathrm{bdry}}(\boldsymbol{\mathcal{S}},\boldsymbol{\mathcal{S}})$ satisfy:
    \begin{equation}
        \label{eq:Coefficients_for_boundary_terms}
        S_{i}\ge 0\;(1\le i\le 8),\quad T_{1}^2\le S_1S_2,\quad T_{2}^2\le S_3S_4.
    \end{equation}    
\end{lemma}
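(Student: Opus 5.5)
The plan is to avoid working directly with the explicit formulas for $S_i$, $T_i$ in terms of $k_i$ and $m_{jk}$. Instead I will read off $\mathscr{A}_{\mathrm{bdry}}(\boldsymbol{\mathcal{S}},\boldsymbol{\mathcal{S}})$ as the boundary contribution of the entropy identity. In the weak formulation \eqref{eq:weak_for_nM}, all boundary integrals come from the flux term $I_{\mathrm{bdry}}=\mathcal{F}_2-\mathcal{F}_1$ in \eqref{eq:entropy_ineq_1}. The normal derivatives there are eliminated using the Onsager conditions \eqref{eq:Onsager_detail}, and $\theta^W,\bm{u}^W$ are moved to the right-hand side. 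With homogeneous wall data ($\theta^W=0$, $\bm u^W=0$, so $\tilde f_W=0$), the skew-symmetric couplings $c,b,e$ contribute boundary terms that cancel pairwise in $\mathscr{A}_1(\boldsymbol{\mathcal{S}},\boldsymbol{\mathcal{S}})$. What remains is exactly $\mathscr{A}_{\mathrm{bdry}}(\boldsymbol{\mathcal{S}},\boldsymbol{\mathcal{S}})$, and it should coincide with $I_{\mathrm{bdry}}$ evaluated at the boundary trace.

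First, I will verify the identification $\mathscr{A}_{\mathrm{bdry}}(\boldsymbol{\mathcal{S}},\boldsymbol{\mathcal{S}})=(f_0,\mathcal{A}_nf_0)_b$ at homogeneous data, with $f_0$ built from the trace values. Second, I will use the kinetic form of the boundary condition \eqref{eq:equivform_OnsagerBC_Distribution}. By \eqref{eq:energy} with $f_W$ reduced to $\rho_W$, which drops out because $\tilde{Q}\mathcal{A}_n\rho_W=0$, this gives $I_{\mathrm{bdry}}=-2(\tilde Q\mathcal{A}_nf_{\even},\mathcal{A}_nf_{\even})_b\ge0$ pointwise on $\partial\Omega$. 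The key point is that this holds for every admissible boundary state, because the odd part is determined by the even part. So the pointwise quadratic form $-2\langle \tilde Q\mathcal A_n f_{\even},\mathcal A_n f_{\even}\rangle$ is positive semidefinite in the even boundary variables. Third, I will decompose this form along the boundary frame. By rotational symmetry about $\bm n$, the even variables split into blocks that do not couple:
\begin{itemize}
\item the tangential vector modes $(\bar s_{t_i},u_{t_i})$;
\item the normal scalar modes $(\bar\sigma_{nn},\theta)$, together with $s_n$ through the corresponding pairing;
\item the remaining tensor modes $\bar\sigma_{t_1t_1}+\tfrac12\bar\sigma_{nn}$, $\bar\sigma_{t_1t_2}$, and $\bar\sigma_{nt_i}$.
\end{itemize}
A positive semidefinite form that is block diagonal has positive semidefinite blocks. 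Hence the diagonal entries satisfy $S_i\ge0$, and the $2\times2$ blocks satisfy $\det\ge0$, which is exactly $T_1^2\le S_1S_2$ and $T_2^2\le S_3S_4$.

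The main obstacle is the bookkeeping in the first step. I must check that each coefficient $S_i,T_i$ in \eqref{eq:Coercivity_on_bdry} is literally a coefficient of the pointwise entropy form, and not altered by the rearrangement into $\mathscr{A}_1$, for example through terms moved into $c$, $b$, $e$ with $R_i$ coefficients. If that matching proves too tangled, I will fall back on direct verification. I would substitute the explicit expressions from the supplementary material and check the inequalities from the Onsager structure of the coefficient matrices $m_{jk}$, namely the symmetry and positive definiteness of the blocks of $Q$ guaranteed in \cite{lin2025time}, done separately for each block listed above.
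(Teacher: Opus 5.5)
Your proposal is correct and follows essentially the paper's argument. The paper likewise identifies $\mathscr{A}_{\mathrm{bdry}}(\boldsymbol{\mathcal{S}},\boldsymbol{\mathcal{S}})$, after the $b,c,e$ boundary terms cancel on the diagonal, with the kinetic flux $I_{\mathrm{bdry}}$ and concludes $\mathscr{A}_{\mathrm{bdry}}(\boldsymbol{\mathcal{S}},\boldsymbol{\mathcal{S}})=-2(\tilde{Q}\mathcal{A}_n f_{\even},\mathcal{A}_n f_{\even})_b\ge 0$; the only cosmetic difference is that it separates off the part linear in $\theta^W,\boldsymbol{u}^W$, where you set the wall data to zero and use $\tilde{Q}\mathcal{A}_n\rho_W=0$.

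A few small corrections. $\bar s_n$ and $\bar\sigma_{nt_i}$ are odd traces, not even ones. You also need no rotational-symmetry argument at the end: \eqref{eq:Coercivity_on_bdry} is already block-decoupled in the eleven independent trace variables, and in any case principal submatrices of a positive semidefinite form are positive semidefinite. What both you and the paper leave implicit is that every trace vector is attained by some state satisfying the Onsager conditions, since the identity $\mathscr{A}_{\mathrm{bdry}}=I_{\mathrm{bdry}}$ holds only on such states.
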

\begin{proof}
    Comparing the diagonal evaluation of the steady weak formulation with the integration-by-parts identity derived in \cref{Sec:Entropy}, we obtain 
    \begin{equation}
        \label{eq:equivalence}
        \mathcal{B}(\bcal{U},\bcal{U})-\mathscr{F}_1(\bcal{S})=\mathscr{A}_{1}(\boldsymbol{\mathcal{S}},\boldsymbol{\mathcal{S}})-\mathscr{F}_{1}(\boldsymbol{\mathcal{S}})=\int_{\Omega}\innerprod{\bcal{U}}{(\mathcal{A}(\nabla)-\mathcal{D}(\nabla,\nabla)-S)\bcal{U}}_{\mathbf{V}}\dif\mathbf{x}.
    \end{equation}
    Based on the kinetic derivations in  \cref{subsec:Bdry_Term_for_Entropy}, this equivalence implies:
    \begin{equation}
        \mathscr{A}_{\mathrm{bdry}}(\boldsymbol{\mathcal{S}},\boldsymbol{\mathcal{S}})-\mathscr{F}_{1}(\boldsymbol{\mathcal{S}})=I_{\mathrm{bdry}}=
        2(\tilde{Q}\mathcal{A}_{n}f_{W},\mathcal{A}_{n}f_{\even})_b-2(\tilde{Q}\mathcal{A}_{n}f_{\even},\mathcal{A}_{n}f_{\even
        })_b.
    \end{equation}
    Note that $\mathscr{F}_1(\mathcal{S})$ is a linear functional dependent on the boundary data $\boldsymbol{u}^W$ and $\theta^W$. Conversely, the quadratic form $\mathscr{A}_{\mathrm{bdry}}(\bcal{S}, \bcal{S})$ is entirely independent of these terms. By separating the terms accordingly, we obtain $\mathscr{F}_1(\mathcal{S}) = -2(\tilde{Q}\mathcal{A}_n f_W, \mathcal{A}_n f_{\even})$ and
$$
\forall\boldsymbol{\mathcal{S}}\in \mathrm{T}_1,\quad
\mathscr{A}_{\mathrm{bdry}}(\boldsymbol{\mathcal{S}},\boldsymbol{\mathcal{S}})=-2(\tilde{Q}\mathcal{A}_{n}f_{\even},\mathcal{A}_{n}f_{\even})_b\ge0.
$$
It implies \eqref{eq:Coercivity_on_bdry} is positive semi-definite, which yields \eqref{eq:Coefficients_for_boundary_terms}.
\end{proof}

Next, we establish the coercivity estimates for $J_1$ and $J_2$ in \eqref{eq:expression_for_A}.
\begin{theorem}
    \label{Thm:Coercive_Part_1}
    Under the strict constraint $25k_4^2 < 24k_3k_7$, the sub-form $J_1$ is coercive, i.e.,
    \begin{equation}
        \label{eq:Coercivity_1}
        J_{1}=a(\bar{\boldsymbol{s}},\bar{\boldsymbol{s}})+2j(\bar{\boldsymbol{s}},\boldsymbol{u})+f(\boldsymbol{u},\boldsymbol{u})\gtrsim \|\bar{\boldsymbol{s}}\|_{1}^2+\|\boldsymbol{u}\|_{1}^2.
    \end{equation}
\end{theorem}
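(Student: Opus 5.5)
We will split $J_1$ into its volume part and its boundary part, bound each from below separately, and then recombine using \cref{Thm:Boundary_related_Korn} together with the zeroth-order term in $a(\cdot,\cdot)$. By \eqref{eq:detailed_bilinear_forms},
\begin{equation*}
J_1 = \Kn\Big[\tfrac{24}{25}k_7\|\stf\nabla\bar{\boldsymbol{s}}\|_0^2 + 2k_4(\stf\nabla\bar{\boldsymbol{s}},\stf\nabla\boldsymbol{u}) + k_3\|\stf\nabla\boldsymbol{u}\|_0^2\Big] + \tfrac45 k_6\Kn\|\nabla\cdot\bar{\boldsymbol{s}}\|_0^2 + \tfrac{4l_1}{15\Kn}\|\bar{\boldsymbol{s}}\|_0^2 + J_{1,b},
\end{equation*}
where $J_{1,b}$ collects the boundary terms $S_5\|\bar s_n\|_b^2 + S_1\sum_i\|\bar s_{t_i}\|_b^2 + 2T_1\sum_i(\bar s_{t_i},u_{t_i})_b + S_2\sum_i\|u_{t_i}\|_b^2$.

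For the volume part, we treat the bracket pointwise as a $2\times2$ quadratic form in $(\stf\nabla\bar{\boldsymbol{s}},\stf\nabla\boldsymbol{u})$ with matrix $\begin{pmatrix}\frac{24}{25}k_7 & k_4\\ k_4 & k_3\end{pmatrix}$. The strict constraint $25k_4^2<24k_3k_7$ forces $k_3,k_7>0$ and makes this matrix positive definite. Its smallest eigenvalue $\lambda_0>0$ then gives
\begin{equation*}
\text{bracket} \ge \lambda_0\big(\|\stf\nabla\bar{\boldsymbol{s}}\|_0^2+\|\stf\nabla\boldsymbol{u}\|_0^2\big).
\end{equation*}
The divergence term is nonnegative because $k_6\ge0$ and can be dropped. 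The zeroth-order term is strictly positive because $l_1>0$.

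For the boundary part, \cref{Lem:coefficients_on_bdry} gives $S_1,S_2,S_5\ge0$ and $T_1^2\le S_1S_2$, so $J_{1,b}\ge0$ and may be discarded.

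It remains to recombine. For $\bar{\boldsymbol{s}}$, we apply the Korn-type inequality \eqref{eq:Korn_type_inequality} of \cite{lewintan2025wellposedness} to $\|\bar{\boldsymbol{s}}\|_0^2+\|\stf\nabla\bar{\boldsymbol{s}}\|_0^2$, which yields $\|\bar{\boldsymbol{s}}\|_1^2$. The velocity $\boldsymbol{u}$ is the delicate piece, since no $L^2$ term in $\boldsymbol{u}$ appears in $J_1$, and this is the main obstacle. To control it, we use $u_n=0$ on $\partial\Omega$ (from $\mathrm V_{\boldsymbol u}=\tilde H^1$), so that $\|\boldsymbol{u}\|_b^2=\sum_i\|u_{t_i}\|_b^2$, and then invoke \cref{Thm:Boundary_related_Korn} to get $\|\boldsymbol{u}\|_1^2\lesssim\sum_i\|u_{t_i}\|_b^2+\|\stf\nabla\boldsymbol{u}\|_0^2$. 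This requires extracting a positive multiple of $\sum_i\|u_{t_i}\|_b^2$ from $J_{1,b}$.

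\textbf{Case $S_2>0$.} We first try to use $J_{1,b}$ itself. If $T_1^2<S_1S_2$, the boundary form is definite and gives $J_{1,b}\gtrsim\sum_i\|u_{t_i}\|_b^2$ directly. In the degenerate case $T_1^2=S_1S_2$ (or $S_1=0$, which forces $T_1=0$), we borrow instead. We write
\begin{equation*}
2T_1(\bar s_{t_i},u_{t_i})_b \ge -\tfrac{T_1^2}{(1-\epsilon)S_2}\|\bar s_{t_i}\|_b^2-(1-\epsilon)S_2\|u_{t_i}\|_b^2,
\end{equation*}
which leaves $\epsilon S_2\sum_i\|u_{t_i}\|_b^2$. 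The resulting deficit in the $\bar s_{t_i}$ terms is of size $\lesssim\epsilon\|\bar{\boldsymbol s}\|_b^2\lesssim\epsilon\|\bar{\boldsymbol s}\|_1^2$ by the trace theorem, and for small $\epsilon$ it is absorbed by the $H^1$ control of $\bar{\boldsymbol{s}}$ already obtained.

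\textbf{Case $S_2=0$.} Then the boundary terms give no information on $\boldsymbol{u}$, and the argument would need the specific coefficient expressions to confirm $S_2>0$ (it comes from $\tilde\chi m_{31}$-type slip terms). I expect to check this in the supplementary formulas.

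Combining the volume bound, the Korn-type inequalities for $\bar{\boldsymbol{s}}$ and $\boldsymbol{u}$, and the boundary control of $u_{t_i}$ then yields \eqref{eq:Coercivity_1}.
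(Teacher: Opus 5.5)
Your proposal follows the paper's proof essentially step by step. It uses positive definiteness of the volume block from $25k_4^2<24k_3k_7$, then the Korn inequality \eqref{eq:Korn_type_inequality} together with the $l_1$ term to control $\bar{\boldsymbol{s}}$, then a trace-inequality borrowing of a small multiple of the $H^1$ control of $\bar{\boldsymbol{s}}$ to make the tangential boundary form definite, after which \cref{Thm:Boundary_related_Korn} controls $\boldsymbol{u}$. The case $S_2=0$ that you leave open is not resolved in the paper either: its claim that enlarging $S_1$ by $\delta$ makes the boundary form strictly positive definite tacitly requires $S_2>0$. That hypothesis cannot be dropped, since on a ball centered at the origin $\boldsymbol{u}=A\mathbf{x}$ with $A$ skew satisfies $u_n=0$ and $\operatorname{stf}\nabla\boldsymbol{u}=0$, so $J_1$ would vanish for $\bar{\boldsymbol{s}}=\mathbf{0}$.
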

\begin{proof}
    First, recall the Korn-type inequalities established previously:
    \begin{equation}
        \label{eq:Korn-type-apply}
        \|\operatorname{stf}\nabla\boldsymbol{s}\|_{0}^2+\|\boldsymbol{s}\|_{0}^2\gtrsim \|\boldsymbol{s}\|_{1}^2,\quad \|\operatorname{stf}\nabla\boldsymbol{u}\|_{0}^2+\|\boldsymbol{u}\|_{b}^2\gtrsim\|\boldsymbol{u}\|_{1}^2,
    \end{equation}
    Due to the non-penetration condition $u_n = 0$, the boundary norm reduces precisely to its tangential components: $\sum_{i=1}^2 \|u_{t_i}\|_b^2 = \|\boldsymbol{u}\|_{L^2(\partial\Omega)}^2$.
    
    Utilizing the parameter constraint $25k_4^2 < 24k_3k_7$ and applying Young's inequality to the cross terms, the volume integrals in $J_1$ can be bounded from below by $\|\stf~\nabla\bar{\boldsymbol{s}}\|_0^2 + \|\stf~\nabla\boldsymbol{u}\|_0^2 + \|\bar{\boldsymbol{s}}\|_0^2$, which sequentially bounds $\|\bar{\boldsymbol{s}}\|_1^2 + \|\stf~\nabla\boldsymbol{u}\|_0^2$. 

    Furthermore, the trace inequality ensures $\|\bar{\boldsymbol{s}}\|_1^2 \gtrsim \|\bar{\boldsymbol{s}}\|_b^2$. This allows us to extract a small parameter $\delta > 0$ from the $H^1$ volume control to augment the boundary coefficient $S_1$, thereby guaranteeing the strict positive definiteness of the boundary quadratic form. Consequently, we have:
    \begin{equation}
        \label{eq:J1_estimation}
        \begin{aligned}
        J_1&\gtrsim\|\bar{\boldsymbol{s}}\|_{1}^2+\|\stf~\nabla\boldsymbol{u}\|_{0}^2+\sum_{i=1}^{2}((S_1+\delta)\|\bar{s}_{t_i}\|_{b}^2+2T_1(\bar{s}_{t_i},u_{t_i})_{0}+S_2\|u_{t_i}\|_{b}^2)\\        &\gtrsim \|\bar{\boldsymbol{s}}\|_{1}^2+\|\stf~\nabla\boldsymbol{u}\|_{0}^2+\|\boldsymbol{u}\|_{b}^2\gtrsim\|\bar{\boldsymbol{s}}\|_{1}^2+\|\boldsymbol{u}\|_{1}^2.
        \end{aligned}
    \end{equation}
\end{proof}
\begin{theorem}
    \label{Thm:Coercive_Part_2}
    Under the strict constraint $3k_2^2<k_1k_{10}$, the sub-form $J_2$ is coercive, i.e., 
    \begin{equation}
        \label{eq:Coercivity_2}
        d(\bar{\boldsymbol{\sigma}},\bar{\boldsymbol{\sigma}})+2z(\theta,\bar{\boldsymbol{\sigma}})+h(\theta,\theta)\gtrsim \|\bar{\boldsymbol{\sigma}}\|_{1}^2+\|\theta\|_{1}^2.
    \end{equation}
\end{theorem}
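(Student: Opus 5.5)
We mirror the proof of \cref{Thm:Coercive_Part_1}, splitting $J_2$ into its volume and boundary contributions and treating each quadratic block separately. Write
\begin{equation*}
J_2=\mathrm{Kn}\Big(k_9\|\stf\nabla\bar{\bm\sigma}\|_0^2+\tfrac12k_{10}\|\nabla\cdot\bar{\bm\sigma}\|_0^2-3k_2(\nabla\theta,\nabla\cdot\bar{\bm\sigma})+\tfrac32k_1\|\nabla\theta\|_0^2\Big)+\tfrac{l_2}{2\mathrm{Kn}}\|\bar{\bm\sigma}\|_0^2+\mathscr{B}_\sigma ,
\end{equation*}
where $\mathscr{B}_\sigma$ collects the boundary terms of $d$, $z$, $h$. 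These are exactly the second line of \eqref{eq:Coercivity_on_bdry}, namely $S_3\|\bar\sigma_{nn}\|_b^2+2T_2(\bar\sigma_{nn},\theta)_b+S_4\|\theta\|_b^2$, plus the nonnegative terms with $S_6,S_7,S_8$.

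First we handle the volume cross term. The $2\times2$ matrix $\begin{pmatrix}\frac32k_1&-\frac32k_2\\-\frac32k_2&\frac12k_{10}\end{pmatrix}$ has determinant $\frac34(k_1k_{10}-3k_2^2)$. Under the strict constraint $3k_2^2<k_1k_{10}$, with $k_1,k_{10}\ge0$ and hence both positive, this matrix is positive definite. Young's inequality then gives
\begin{equation*}
\tfrac32k_1\|\nabla\theta\|_0^2-3k_2(\nabla\theta,\nabla\cdot\bar{\bm\sigma})+\tfrac12k_{10}\|\nabla\cdot\bar{\bm\sigma}\|_0^2\gtrsim\|\nabla\theta\|_0^2+\|\nabla\cdot\bar{\bm\sigma}\|_0^2 .
\end{equation*}

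Next we control $\bar{\bm\sigma}$ in $H^1$. We have $k_9\|\stf\nabla\bar{\bm\sigma}\|_0^2+\frac{l_2}{2\mathrm{Kn}}\|\bar{\bm\sigma}\|_0^2$. If $k_9>0$, we apply a Korn-type inequality of the form \eqref{eq:Korn_type_inequality} to tensor fields, the analogue of \eqref{eq:Korn-type-apply} for $\mathbb R^{3\times3}_{\stf}$-valued fields. That inequality is established for vectors in \cite{lewintan2025wellposedness}, and the corresponding trace-free symmetric tensor version, i.e.\ $\|\bar{\bm\sigma}\|_0^2+\|\stf\nabla\bar{\bm\sigma}\|_0^2\gtrsim\|\bar{\bm\sigma}\|_1^2$, is also available there. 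Combined with the $\|\nabla\cdot\bar{\bm\sigma}\|_0^2$ term, if needed, this yields $\|\bar{\bm\sigma}\|_1^2$. This tensor Korn step is the point to double-check: the operator $\bar{\bm\sigma}\mapsto(\stf\nabla\bar{\bm\sigma},\nabla\cdot\bar{\bm\sigma})$ must be elliptic with finite-dimensional kernel, and I expect this to be the main obstacle, to be settled by citing the generalized Korn inequalities \cite{gmeineder2024limiting,lewintan2025wellposedness}.

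We now turn to $\theta$. So far we have $\|\nabla\theta\|_0^2$ but no zeroth-order control, and it must come from the boundary. By Lemma~\ref{Lem:coefficients_on_bdry}, $T_2^2\le S_3S_4$. The trace inequality gives $\|\bar{\bm\sigma}\|_1^2\gtrsim\|\bar\sigma_{nn}\|_b^2$, so, as in \cref{Thm:Coercive_Part_1}, we may borrow a small $\delta>0$ from the $H^1$ control of $\bar{\bm\sigma}$ and replace $S_3$ by $S_3+\delta$. The boundary block is then strictly positive definite provided $S_4>0$, giving $\gtrsim\|\bar\sigma_{nn}\|_b^2+\|\theta\|_b^2$.

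Finally, a Friedrichs/Poincaré-type inequality $\|\nabla\theta\|_0^2+\|\theta\|_b^2\gtrsim\|\theta\|_1^2$ completes the estimate. This follows by the same compactness contradiction argument as \cref{Thm:Boundary_related_Korn}: the kernel consists of constants vanishing on $\partial\Omega$, hence is trivial.

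Summing the three lower bounds gives $J_2\gtrsim\|\bar{\bm\sigma}\|_1^2+\|\theta\|_1^2$. Besides the tensor Korn step, the other delicate point is the strict positivity $S_4>0$, which I would verify from the explicit expressions, since $S_4$ is proportional to $\tilde\chi m_{11}$-type accommodation coefficients.
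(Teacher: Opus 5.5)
Your proposal is correct and follows essentially the same route as the paper's proof: Young's inequality on the volume cross term under $3k_2^2<k_1k_{10}$, then the tensor Korn-type inequality of \cite{lewintan2025wellposedness} for $\bar{\boldsymbol{\sigma}}$, then borrowing a small $\delta$ via the trace inequality to make the $(\bar\sigma_{nn},\theta)$ boundary block definite, and finally the Friedrichs inequality $\|\nabla\theta\|_0^2+\|\theta\|_b^2\gtrsim\|\theta\|_1^2$. The condition $S_4>0$ that you flag is genuinely needed: if $S_4=0$, then $T_2=0$ and nonzero constant $\theta$ lie in the kernel of $J_2$. The paper's proof uses this condition only implicitly, when it asserts strict positive definiteness of that boundary block.
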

\begin{proof}
    The proof proceeds analogously to that of \cref{Thm:Coercive_Part_1}. Applying Young's inequality under the condition $3k_2^2 < k_1 k_{10}$ bounds the volume integrals of $J_2$ from below by $\|\stf \nabla\bar{\boldsymbol{\sigma}}\|_0^2 + \|\bar{\boldsymbol{\sigma}}\|_0^2 + \|\nabla\theta\|_0^2$. By the Korn-type inequality in \cite{lewintan2025wellposedness}, this sum subsequently controls $\|\bar{\boldsymbol{\sigma}}\|_1^2 + \|\nabla\theta\|_0^2$. As before, the trace inequality $\|\bar{\boldsymbol{\sigma}}\|_1^2 \gtrsim \|\bar{\boldsymbol{\sigma}}\|_b^2$ enables us to extract a sufficiently small parameter $\delta > 0$ to guarantee the strict positive definiteness of the boundary quadratic form governed by the coefficients $S_3$, $S_4$, $T_2$ and $\tilde{T}_2$. Integrating the generalized Poincaré-Friedrichs inequality (which asserts $\|\nabla\theta\|_0^2 + \|\theta\|_b^2 \gtrsim \|\theta\|_1^2$), we conclude:
    \begin{equation}
        \label{eq:J2_estimation}
        \begin{aligned}
        J_2&\gtrsim\|\bar{\boldsymbol{\sigma}}\|_{1}^2+\|\nabla\theta\|_{0}^2+(S_3+\delta)\|\sigma_{nn}\|_{b}^2+2T_{2}(\theta,\sigma_{nn})_{0}+S_4\|\theta\|_{b}^2\\
        &\gtrsim \|\bar{\boldsymbol{\sigma}}\|_{1}^2+\|\nabla\theta\|_{0}^2+\|\theta\|_{b}^2\gtrsim\|\bar{\boldsymbol{\sigma}}\|_{1}^2+\|\theta\|_{1}^2.
        \end{aligned}
    \end{equation}
\end{proof}

Together, \cref{Thm:Coercive_Part_1} and \cref{Thm:Coercive_Part_2} establish the coercivity of $\mathscr{A}_1$, while \cref{Thm:inf_sup_nonMax} ensures the inf-sup condition for $\mathscr{B}_1$. Hence, under the strict constraints stated earlier, \cref{thm:steady-main} holds for non-Maxwell molecules. 
\begin{remark}
    The constraints $25k_4^2<24k_3k_7$ and $3k_2^2<k_1k_{10}$ for non-Maxwell molecules are numerically validated in \cref{appA:Coefficients_for_k}. They fail in the Maxwell case, so the above argument does not extend directly. This only reflects the limitation of the present coercivity framework, rather than any ill-posedness of the Maxwell model. A separate analysis will be given in the next subsection.
\end{remark}
\subsection{Maxwell gas molecules}
\label{subsec:stable_Maxwell_molecule}
For Maxwell molecules, the loss of coercivity in \(\mathscr A_1\) necessitates the
alternative weak formulation \eqref{eq:weak_for_M} and redefined spaces:
\begin{equation}
     \label{eq:continuous_spaces_Maxwell}
\begin{aligned}
    &\mathrm{V}_{\boldsymbol{\sigma}} \coloneqq H^1(\Omega;\mathbb{R}_{\stf}^{3\times 3}),\quad \mathrm{V}_{\boldsymbol{s}} \coloneqq H^1(\Omega;\mathbb{R}^3),\\ 
    &\mathrm{V}_{\boldsymbol{u}} \coloneqq H_{0}(\mathrm{div},\Omega),\quad \mathrm{V}_{\theta} \coloneqq L^2(\Omega;\mathbb{R}),\quad \mathrm{V}_{p}\coloneqq L_{0}^{2}(\Omega;\mathbb{R}),
\end{aligned}
\end{equation}
where $H_{0}(\mathrm{div},\Omega)\coloneqq \{\boldsymbol{u}\in H(\mathrm{div},\Omega):\boldsymbol{u}\cdot\boldsymbol{n}=0\text{ on }\partial\Omega\}$. 
Lewintan et al.~\cite{lewintan2025wellposedness} established the well-posedness of the Maxwell case in a continuous setting with
\(\mathrm{V}_{\bm{u}}=L^2(\Omega;\mathbb R^3)\) and
\(\mathrm{V}_p=\{p\in H^1(\Omega):\int_\Omega p\,dx=0\}\).
While this choice is natural for boundary conditions allowing penetration, it
would impose unnecessarily strong regularity on the pressure under the
non-penetration condition considered here, possibly leading to mismatched approximation orders in finite element discretizations; see
\cite{hu2026stabilityconvergencemixedfinite}.
We instead use
\(\mathrm{V}_p=L_0^2(\Omega)\) and \(\mathrm{V}_{\bm{u}}=H_0(\mathrm{div},\Omega)\). Under this alternative functional setting, we prove the well-posedness of the Maxwell case.
\begin{theorem}
\label{thm:inf_sup_modified_Maxwell}
    For a bounded Lipschitz domain $\Omega \subset \mathbb{R}^3$, the operator $\mathscr{B}_2$ satisfies inf-sup condition, i.e. 
    \begin{equation}
        \label{eq:new_inf_sup_Maxwell}
        \sup_{\boldsymbol{\mathcal{R}}\in\mathrm{T}_2}\frac{|\mathscr{B}_{2}(\boldsymbol{\mathcal{R}},\boldsymbol{\mathcal{V}})|}{\|\boldsymbol{\mathcal{R}}\|_{\mathrm{T}_2}}\gtrsim \|\boldsymbol{\mathcal{V}}\|_{\mathrm{W}}.
    \end{equation}
\end{theorem}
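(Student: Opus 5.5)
Given $\boldsymbol{\mathcal V}=(\boldsymbol v,\gamma)\in\mathrm W=H_0(\mathrm{div},\Omega)\times L^2(\Omega)$, we construct an explicit $\boldsymbol{\mathcal R}=(\boldsymbol\tau,\boldsymbol r,q)\in\mathrm T_2$ with $\mathscr B_2(\boldsymbol{\mathcal R},\boldsymbol{\mathcal V})\gtrsim\|\boldsymbol{\mathcal V}\|_{\mathrm W}^2$ and $\|\boldsymbol{\mathcal R}\|_{\mathrm T_2}\lesssim\|\boldsymbol{\mathcal V}\|_{\mathrm W}$. The form splits into three decoupled pieces acting on $q$, $\boldsymbol r$, $\boldsymbol\tau$:
\[
\mathscr B_2(\boldsymbol{\mathcal R},\boldsymbol{\mathcal V})
=(q,\nabla\cdot\boldsymbol v)-k_0(\gamma,\nabla\cdot\boldsymbol r)-R_1(\gamma,r_n)_b-k_5(\boldsymbol v,\nabla\cdot\boldsymbol\tau)-R_4(\tau_{nt_i},v_{t_i})_b .
\]
We may therefore build the three components separately and add them, taking care that the cross couplings are either zero or absorbed. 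Since $\boldsymbol v\in H_0(\mathrm{div})$, the divergence theorem gives $\int_\Omega\nabla\cdot\boldsymbol v=0$, so $q:=\nabla\cdot\boldsymbol v\in L^2_0(\Omega)$ is admissible and contributes $\|\nabla\cdot\boldsymbol v\|_0^2$.

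For $\gamma$, we use the Bogovskii/Galdi result already used in \cref{Thm:inf_sup_nonMax}, in its surjectivity form. Write $\gamma=\bar\gamma+\gamma_0$ with $\gamma_0\in L^2_0$ and $\bar\gamma$ the mean. Choose $\boldsymbol r_0\in H^1_0(\Omega;\mathbb R^3)$ with $\nabla\cdot\boldsymbol r_0=-\gamma_0$ and $\|\boldsymbol r_0\|_1\lesssim\|\gamma_0\|_0$. To capture the constant mode, add a fixed smooth field $\boldsymbol r_c$ (for instance $\boldsymbol r_c=-\mathbf x$) with $\nabla\cdot\boldsymbol r_c=-3$. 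Its boundary term $R_1(\gamma,r_{c,n})_b$ is then nonzero and is not controlled by $\|\gamma\|_0$, since $\gamma$ has no trace. This is the first obstacle. We will handle it by taking $\boldsymbol r_c\in H^1_0$ as well. This is impossible for the constant mode in divergence form alone, because $\int\nabla\cdot\boldsymbol r=0$ whenever $\boldsymbol r\in H^1_0$. Hence the plan: use $\boldsymbol r=\boldsymbol r_0+\bar\gamma\,\boldsymbol w$, where $\boldsymbol w\in H^1$ has $\nabla\cdot\boldsymbol w=-1$, and note that $R_1(\gamma,r_n)_b=R_1\bar\gamma(1,w_n)_b+R_1(\gamma_0,w_n)_b\bar\gamma$. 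Since the bilinear form is only defined for traces, we interpret $\gamma$ on the boundary in the way the weak form actually uses it. For Maxwell molecules one should check in the supplementary coefficients whether $R_1=0$ (i.e.\ whether $m_{11}$ enters only through $L_1$). If $R_1\neq0$, the boundary term $R_1(\gamma,r_n)_b$ must be reinterpreted after integration by parts as $k_0(\gamma,\nabla\cdot\boldsymbol r)$ plus a constant multiple of $(\bar\gamma, r_n)_b$. The honest plan is therefore: if $R_1=0$ the construction above works directly, and otherwise restrict to $\boldsymbol r\in H^1_0$ for $\gamma_0$ and treat $\bar\gamma$ by a single explicit field $\boldsymbol w$ with $k_0|\Omega|+R_1\int_\Gamma w_n\neq0$ (adjusting the sign or scale of $\boldsymbol w$), which is possible because $\bar\gamma$ is just a scalar.

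For $\boldsymbol v$, we need $\|\boldsymbol v\|_0$. Choose $\boldsymbol\tau\in H^1_0(\Omega;\mathbb R^{3\times3}_{\stf})$ with $\nabla\cdot\boldsymbol\tau$ close to $-\boldsymbol v$. The boundary term vanishes, and we need surjectivity of $\operatorname{div}:H^1_0(\Omega;\mathbb R^{3\times3}_{\stf})\to H^{-1}$ with closed range. Since $\boldsymbol v\in L^2\subset H^{-1}$, only an $H^{-1}$ bound on $\boldsymbol v$ is obtained, not an $L^2$ bound; this is the main obstacle. To overcome it, take instead $\boldsymbol\tau\in H^1(\Omega;\mathbb R^{3\times3}_{\stf})$ with $\nabla\cdot\boldsymbol\tau=-\boldsymbol v$. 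This needs $\boldsymbol v$ to lie in the range of $\operatorname{div}$ from $H^1$ stf tensors into $L^2$, which fails in general because it requires $H^{-1}\to L^2$ smoothing. The honest route is a Helmholtz decomposition. Write $\boldsymbol v=\nabla\phi+\boldsymbol v^\perp$ with $\nabla\cdot\boldsymbol v^\perp=0$. The gradient part is controlled by $\|\nabla\cdot\boldsymbol v\|_{0}$, which $q$ already handles, via the Neumann problem since $v_n=0$. The solenoidal part must be reached through $\nabla\cdot\boldsymbol\tau$ with $\boldsymbol\tau=-\operatorname{stf}\nabla\boldsymbol z$, where $\boldsymbol z$ solves $\nabla\cdot\operatorname{stf}\nabla\boldsymbol z=\boldsymbol v^\perp$; elliptic regularity (Korn coercivity from \eqref{eq:Korn_type_inequality}) gives $\boldsymbol z\in H^1$ only, so $\boldsymbol\tau\in L^2$. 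We therefore expect to either invoke $H^2$ regularity on Lipschitz domains (problematic) or use the boundary terms $R_4(\tau_{nt_i},v_{t_i})_b$ with a Lax--Milgram problem posed in $\mathrm V_{\boldsymbol\sigma}$ directly. Summing the three constructions with suitable weights yields \eqref{eq:new_inf_sup_Maxwell}.
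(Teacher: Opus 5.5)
\textbf{There is a genuine gap: the $\boldsymbol v$-component is never constructed.} Your choice $q=\nabla\cdot\boldsymbol v$ matches the paper. Your $\gamma$-construction $\boldsymbol r=\boldsymbol r_0+\bar\gamma\,\boldsymbol w$ (a Bogovskii field for the mean-free part, plus e.g.\ $\boldsymbol w=-\mathbf x/3$ for the mean) is a correct, explicit version of the divergence-surjectivity step the paper uses.

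The $\boldsymbol v$-component, however, is the heart of the statement, and your reason for abandoning the direct route is incorrect. You claim that finding $\boldsymbol\tau\in H^1(\Omega;\mathbb R^{3\times3}_{\mathrm{stf}})$ with $-\nabla\cdot\boldsymbol\tau=\boldsymbol v$ and $\|\boldsymbol\tau\|_1\lesssim\|\boldsymbol v\|_0$ ``fails in general''. In fact this right inverse of the stf-divergence exists on every bounded Lipschitz domain, and it is exactly the ingredient the paper takes from Lewintan et al. No regularity of $\partial\Omega$ is needed, because $\mathrm V_{\boldsymbol\sigma}$ carries no boundary condition. One way to build it:
\begin{itemize}
\item Extend $\boldsymbol v$ by zero to a ball $B\supset\Omega$, giving $\tilde{\boldsymbol v}$.
\item Solve the Lam\'e-type problem $\nabla\cdot(\mathrm{stf}\,\nabla\boldsymbol z)=\tfrac12\Delta\boldsymbol z+\tfrac16\nabla(\nabla\cdot\boldsymbol z)=\tilde{\boldsymbol v}$ in $B$, with $\boldsymbol z=\mathbf 0$ on $\partial B$. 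It is coercive on $H^1_0(B)$ by the identity $\|\mathrm{stf}\,\nabla\boldsymbol z\|_0^2=\tfrac12\|\nabla\boldsymbol z\|_0^2+\tfrac16\|\nabla\cdot\boldsymbol z\|_0^2$ from the proof of \cref{Lem:CK_as_Level_Set}.
\item Use $H^2$-regularity on the smooth domain $B$ to get $\|\boldsymbol z\|_{2,B}\lesssim\|\boldsymbol v\|_0$.
\item Set $\boldsymbol\tau:=-\mathrm{stf}\,\nabla\boldsymbol z|_{\Omega}$.
\end{itemize}
Your own candidate $\boldsymbol\tau=-\mathrm{stf}\,\nabla\boldsymbol z$ was the right one. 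The only error was posing the auxiliary problem on the Lipschitz domain $\Omega$ instead of on a smooth superset; the Helmholtz splitting is unnecessary. With this $\boldsymbol\tau$ you get directly $\mathscr B_2(\tilde{\boldsymbol{\mathcal R}},\boldsymbol{\mathcal V})=\|\nabla\cdot\boldsymbol v\|_0^2+\|\boldsymbol v\|_0^2+\|\gamma\|_0^2=\|\boldsymbol{\mathcal V}\|_{\mathrm W}^2$ and $\|\tilde{\boldsymbol{\mathcal R}}\|_{\mathrm T_2}\lesssim\|\boldsymbol{\mathcal V}\|_{\mathrm W}$.

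\textbf{Your fallbacks do not close the gap.} $H^2$-regularity on a general Lipschitz $\Omega$ is not available. The boundary couplings $R_4(\tau_{nt_i},v_{t_i})_b$ and $R_1(\gamma,r_n)_b$ cannot be used, or even written, on $\mathrm W$: $\gamma\in L^2(\Omega)$ has no trace, and $\boldsymbol v\in H_0(\mathrm{div},\Omega)$ has only a normal trace. In the paper's Maxwell formulation, $\mathscr B_2(\boldsymbol{\mathcal R},\boldsymbol{\mathcal V})=\int_\Omega\bigl(q\nabla\cdot\boldsymbol v-\boldsymbol v\cdot(\nabla\cdot\boldsymbol\tau)-\gamma\nabla\cdot\boldsymbol r\bigr)\,\mathrm d\mathbf x$ is purely volumetric, so your case distinction on $R_1$ is moot.

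Your diagnosis of the $H^1_0$ attempt is also off. $\nabla\cdot$ maps $H^1_0(\Omega;\mathbb R^{3\times3}_{\mathrm{stf}})$ into $L^2$, not merely $H^{-1}$. The real obstruction there is a finite-dimensional defect: the range is $L^2$-orthogonal to $\mathbf{CK}$, since $(\nabla\cdot\boldsymbol\tau,\boldsymbol\varphi)=-(\boldsymbol\tau,\mathrm{stf}\,\nabla\boldsymbol\varphi)$. This is why one takes $\boldsymbol\tau$ without boundary conditions.

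As written, the final ``summing the three constructions'' has no admissible $\boldsymbol\tau$. The $\|\boldsymbol v\|_0$ part of $\|\boldsymbol{\mathcal V}\|_{\mathrm W}$ is therefore not controlled, and the inf-sup bound is not established.
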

\begin{proof}
    Recall the explicit formulation of $\mathscr{B}_{2}$ as:
    \begin{equation}    \mathscr{B}_2(\boldsymbol{\mathcal{R}},\boldsymbol{\mathcal{V}})=\int_{\Omega}(q\nabla\cdot\boldsymbol{v}-\boldsymbol{v}\cdot(\nabla\cdot\boldsymbol{\tau})-\gamma\nabla\cdot\boldsymbol{r})\dif\mathbf{x}.
    \end{equation}
    For any $\boldsymbol{\mathcal{V}}=(\boldsymbol{v}, \gamma) \in \mathrm{W}$, we construct a specific test function $\tilde{\boldsymbol{\mathcal{R}}}\coloneqq(\boldsymbol{\tau}, \boldsymbol{r}, q) \in \mathrm{T}_2$. Setting $q := \nabla \cdot \boldsymbol{v}$ ensures $q \in L_0^2(\Omega)$ due to the boundary condition $\boldsymbol{v} \cdot \boldsymbol{n} = 0$. 
    Furthermore, the surjectivity of the divergence operator \cite{lewintan2025wellposedness} guarantees the existence of $\boldsymbol{\tau}$ and $\boldsymbol{r}$ such that $-\nabla \cdot \boldsymbol{\tau} = \boldsymbol{v}$ and $-\nabla \cdot \boldsymbol{r} = \gamma$, satisfying the stability bounds $\|\boldsymbol{\tau}\|_1 \lesssim \|\boldsymbol{v}\|_0$ and $\|\boldsymbol{r}\|_1 \lesssim \|\gamma\|_0$.

    This construction directly yields $\mathscr{B}_2(\tilde{\boldsymbol{\mathcal{R}}}, \boldsymbol{\mathcal{V}}) = \|\boldsymbol{\mathcal{V}}\|_{\mathrm{W}}^2$.Since the stability bounds concurrently ensure $\|\tilde{\boldsymbol{\mathcal{R}}}\|_{\mathrm{T}_2} \lesssim \|\boldsymbol{\mathcal{V}}\|_{\mathrm{W}}$, the inf-sup condition follows immediately:
    \begin{equation}
    \sup_{\boldsymbol{\mathcal{R}} \in \mathrm{T}_2} \frac{|\mathscr{B}_2(\boldsymbol{\mathcal{R}}, \boldsymbol{\mathcal{V}})|}{\|\boldsymbol{\mathcal{R}}\|_{\mathrm{T}_2}} \ge \frac{\mathscr{B}_2(\tilde{\boldsymbol{\mathcal{R}}}, \boldsymbol{\mathcal{V}})}{\|\tilde{\boldsymbol{\mathcal{R}}}\|_{\mathrm{T}_2}} \gtrsim \frac{\|\boldsymbol{\mathcal{V}}\|_{\mathrm{W}}^2}{\|\boldsymbol{\mathcal{V}}\|_{\mathrm{W}}} = \|\boldsymbol{\mathcal{V}}\|_{\mathrm{W}}. 
    \end{equation}
\end{proof}
To complete the well-posedness analysis, we now establish the coercivity of $\mathscr{A}_2$ on $\ker\mathscr{B}_2$. We begin by characterizing this kernel space.
\begin{lemma}
\label{Lem:KernelB}
    If $\boldsymbol{\mathcal{S}}= (\boldsymbol{\sigma}, \boldsymbol{s}, p) \in \ker\mathscr{B}_2$, then $p \in H^1(\Omega) \cap L_0^2(\Omega)$ with $\nabla p = -\nabla \cdot \boldsymbol{\sigma}$, and $\nabla \cdot \boldsymbol{s} = 0$.
\end{lemma}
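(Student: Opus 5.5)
The plan is to unpack the condition $\mathscr{B}_2(\boldsymbol{\mathcal{S}},\boldsymbol{\mathcal{V}})=0$ for all $\boldsymbol{\mathcal{V}}=(\boldsymbol{v},\gamma)\in\mathrm{W}=H_0(\mathrm{div},\Omega)\times L^2(\Omega)$, testing with each component separately. From the proof of \cref{thm:inf_sup_modified_Maxwell}, $\boldsymbol{\mathcal{S}}=(\boldsymbol{\sigma},\boldsymbol{s},p)\in\ker\mathscr{B}_2$ means
\begin{equation*}
\int_\Omega \big(p\,\nabla\cdot\boldsymbol{v}-\boldsymbol{v}\cdot(\nabla\cdot\boldsymbol{\sigma})-\gamma\,\nabla\cdot\boldsymbol{s}\big)\dif\mathbf{x}=0\qquad\forall(\boldsymbol{v},\gamma)\in\mathrm{W}.
\end{equation*}
(If the boundary term $R_4(\sigma_{nt_i},v_{t_i})_b$ appears in $e$, it must be handled separately; see the last paragraph.)

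\emph{Step 1: the scalar component.} Take $\boldsymbol{v}=\mathbf{0}$ and $\gamma$ arbitrary in $L^2(\Omega)$. This gives $(\gamma,\nabla\cdot\boldsymbol{s})=0$ for all $\gamma$. Since $\boldsymbol{s}\in H^1$, we have $\nabla\cdot\boldsymbol{s}\in L^2$, and choosing $\gamma=\nabla\cdot\boldsymbol{s}$ yields $\nabla\cdot\boldsymbol{s}=0$.

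\emph{Step 2: the vector component.} Take $\gamma=0$ and $\boldsymbol{v}\in C_c^\infty(\Omega;\mathbb{R}^3)\subset H_0(\mathrm{div},\Omega)$. This gives $(p,\nabla\cdot\boldsymbol{v})=(\nabla\cdot\boldsymbol{\sigma},\boldsymbol{v})$. In the sense of distributions this says $-\nabla p=\nabla\cdot\boldsymbol{\sigma}$. Because $\boldsymbol{\sigma}\in H^1$, the right-hand side lies in $L^2(\Omega;\mathbb{R}^3)$. Hence $p\in L^2$ has a distributional gradient in $L^2$, so $p\in H^1(\Omega)$ with $\nabla p=-\nabla\cdot\boldsymbol{\sigma}$; for Lipschitz domains this is the standard fact that $p\in L^2$ with $\nabla p\in H^{-1}\cap L^2$ belongs to $H^1$, as in Ne\v{c}as' lemma. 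Since $p\in\mathrm{V}_p=L_0^2(\Omega)$ already, we conclude $p\in H^1(\Omega)\cap L_0^2(\Omega)$.

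\emph{Step 3: consistency with general test functions.} For completeness, integrate by parts for general $\boldsymbol{v}\in H_0(\mathrm{div},\Omega)$: $(p,\nabla\cdot\boldsymbol{v})=-(\nabla p,\boldsymbol{v})+\langle p,\boldsymbol{v}\cdot\boldsymbol{n}\rangle_{\partial\Omega}$. The boundary term vanishes because $\boldsymbol{v}\cdot\boldsymbol{n}=0$, so the identity is consistent with the whole space and no extra boundary constraint on $p$ arises.

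\emph{Main obstacle.} The only delicate point is whether $e(\boldsymbol{v},\boldsymbol{\sigma})$ carries the tangential boundary term $R_4(\sigma_{nt_i},v_{t_i})_b$. Tangential traces of $H_0(\mathrm{div})$ functions are not defined, which is presumably why $\mathscr{B}_2$ as written in the inf-sup proof contains only volume terms. If that term is present, I would still restrict to compactly supported $\boldsymbol{v}$ in Step 2, where it vanishes, so the interior identity and the regularity of $p$ remain valid. The remaining boundary relation would then be a condition that does not affect the claimed conclusions. The regularity upgrade of $p$ relies only on $\nabla\cdot\boldsymbol{\sigma}\in L^2$, so it is routine.
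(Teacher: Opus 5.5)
Your proposal is correct and follows essentially the same route as the paper: testing with $(\mathbf{0},\gamma)$ gives $\nabla\cdot\boldsymbol{s}=0$, and testing with compactly supported $\boldsymbol{v}$ identifies the distributional gradient $\nabla p=-\nabla\cdot\boldsymbol{\sigma}\in L^2(\Omega;\mathbb{R}^3)$. Restricting to compactly supported $\boldsymbol{v}$ also sidesteps any tangential boundary term in $e$. The appeal to Ne\v{c}as' lemma is superfluous, since an $L^2$ function whose distributional gradient is in $L^2$ lies in $H^1(\Omega)$ by definition.
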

\begin{proof}
    By definition, $\boldsymbol{\mathcal{S}}\in \ker\mathscr{B}_2$ implies $\int_\Omega (p \nabla \cdot \boldsymbol{v} - \boldsymbol{v} \cdot (\nabla \cdot \boldsymbol{\sigma}) - \gamma \nabla \cdot \boldsymbol{s}) d\mathbf{x} = 0$ for all $(\boldsymbol{v}, \gamma) \in \mathrm{W}$. Testing this identity with $(\boldsymbol{0}, \gamma)$ immediately yields $\nabla \cdot \boldsymbol{s} = 0$. Subsequently, testing against $(\boldsymbol{v}, 0)$ for any $\boldsymbol{v} \in C_0^\infty(\Omega; \mathbb{R}^3) \subset H_0(\text{div}, \Omega)$ gives $\int_\Omega p \nabla \cdot \boldsymbol{v} \dif\mathbf{x} = \int_\Omega \boldsymbol{v} \cdot (\nabla \cdot \boldsymbol{\sigma}) \dif\mathbf{x}$. This perfectly matches the weak derivative definition, implying $\nabla p = -\nabla \cdot \boldsymbol{\sigma} \in L^2(\Omega; \mathbb{R}^3)$, which confirms $p \in H^1(\Omega)$.
\end{proof}
Given $p \in L_0^2(\Omega)$, \cref{Lem:KernelB} and Poincaré's inequality allow us to bound the pressure norm via the stress tensor strictly:
\begin{equation}
    \label{eq:estimate_for_p_L2}
    \|p\|_{0}\lesssim \|\nabla p\|_{0}=\|\nabla\cdot\boldsymbol{\sigma}\|_{0}\lesssim\|\boldsymbol{\sigma}\|_{1}.
\end{equation}
Coupling \eqref{eq:estimate_for_p_L2} with the structural coercivity established in \cite[Theorem 4.3]{lewintan2025wellposedness}, we directly arrive at the following coercivity result for the kernel space.
\begin{theorem}
    \label{Thm:Coercivity_NOW}
    The bilinear form $\mathscr{A}_2$ is coercive on the space $\ker\mathscr{B}_2$, i.e. $\forall\boldsymbol{\mathcal{S}}\in\ker\mathscr{B}_2$, $\mathscr{A}_2(\boldsymbol{\mathcal{S}},\boldsymbol{\mathcal{S}})\gtrsim\|\boldsymbol{\mathcal{S}}\|_{\mathrm{T}}^{2}$.
\end{theorem}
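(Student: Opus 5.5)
The plan is to show that on $\ker\mathscr{B}_2$ the quadratic form $\mathscr{A}_2(\boldsymbol{\mathcal{S}},\boldsymbol{\mathcal{S}})$ controls $\|\boldsymbol{\sigma}\|_1^2+\|\boldsymbol{s}\|_1^2$, and then to absorb $\|p\|_0^2$ using \eqref{eq:estimate_for_p_L2}.

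First, I will evaluate the form on the diagonal. By \eqref{eq:expression_for_operators_2}, the skew-symmetric coupling terms $c(\boldsymbol{s},\boldsymbol{\tau})-c(\boldsymbol{r},\boldsymbol{\sigma})$ cancel when $\boldsymbol{\mathcal{R}}=\boldsymbol{\mathcal{S}}$. This leaves
\[
\mathscr{A}_2(\boldsymbol{\mathcal{S}},\boldsymbol{\mathcal{S}})=a(\boldsymbol{s},\boldsymbol{s})+d(\boldsymbol{\sigma},\boldsymbol{\sigma}),
\]
with no contribution from $p$. In the Maxwell case the volume parts are $\tfrac{24}{25}k_7\Kn\|\operatorname{stf}\nabla\boldsymbol{s}\|_0^2+\tfrac45k_6\Kn\|\nabla\cdot\boldsymbol{s}\|_0^2+\tfrac{4l_1}{15\Kn}\|\boldsymbol{s}\|_0^2$ and $k_9\Kn\|\operatorname{stf}\nabla\boldsymbol{\sigma}\|_0^2+\tfrac12k_{10}\Kn\|\nabla\cdot\boldsymbol{\sigma}\|_0^2+\tfrac{l_2}{2\Kn}\|\boldsymbol{\sigma}\|_0^2$.

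Next, I will handle the boundary contributions. Since $k_1=k_2=k_3=k_4=0$ for Maxwell molecules, the mixed boundary terms involving $T_1,T_2$ pair $\boldsymbol{s},\boldsymbol{\sigma}$ only with $\boldsymbol{u},\theta$, and those do not appear in $\mathscr{A}_2$. The remaining boundary terms in $a$ and $d$ have coefficients $S_1,S_3,S_5,\dots,S_8$, which are nonnegative by \cref{Lem:coefficients_on_bdry}. I will simply drop them as nonnegative.

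For the volume terms, I will use $k_7,k_9>0$ and $l_1,l_2>0$ (the Maxwell values from \cite{lin2025time}). Applying the Korn-type inequality \eqref{eq:Korn_type_inequality} to $\boldsymbol{s}$, and its tensor analogue from \cite{lewintan2025wellposedness} to $\boldsymbol{\sigma}$, gives
\[
\mathscr{A}_2(\boldsymbol{\mathcal{S}},\boldsymbol{\mathcal{S}})\gtrsim \|\boldsymbol{s}\|_0^2+\|\operatorname{stf}\nabla\boldsymbol{s}\|_0^2+\|\boldsymbol{\sigma}\|_0^2+\|\operatorname{stf}\nabla\boldsymbol{\sigma}\|_0^2\gtrsim\|\boldsymbol{s}\|_1^2+\|\boldsymbol{\sigma}\|_1^2 .
\]
This is essentially the content of \cite[Theorem 4.3]{lewintan2025wellposedness}, and I would cite it directly.

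Finally, for $\boldsymbol{\mathcal{S}}\in\ker\mathscr{B}_2$, \cref{Lem:KernelB} together with \eqref{eq:estimate_for_p_L2} gives $\|p\|_0\lesssim\|\boldsymbol{\sigma}\|_1$. Hence
\[
\|\boldsymbol{\mathcal{S}}\|_{\mathrm{T}_2}^2=\|\boldsymbol{\sigma}\|_1^2+\|\boldsymbol{s}\|_1^2+\|p\|_0^2\lesssim\|\boldsymbol{\sigma}\|_1^2+\|\boldsymbol{s}\|_1^2\lesssim\mathscr{A}_2(\boldsymbol{\mathcal{S}},\boldsymbol{\mathcal{S}}).
\]
The step needing the most care is the second one. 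I must make sure that no indefinite boundary cross term between $\boldsymbol{s}$ and $\boldsymbol{\sigma}$ (for instance from $R_2,R_3$ in $c$) survives in the diagonal form. They do not, because $c$ enters antisymmetrically and cancels exactly. I must also confirm that the Maxwell parameters indeed give strictly positive $k_7,k_9,l_1,l_2$. If $k_7$ or $k_9$ vanished, I would instead use the tensor Korn inequality together with the $\nabla\cdot$ terms, but I expect positivity to hold.
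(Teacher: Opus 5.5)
Your proof is correct and follows the same route as the paper. The paper also obtains control of $\|\boldsymbol{\sigma}\|_1^2+\|\boldsymbol{s}\|_1^2$ from the coercivity result \cite[Theorem 4.3]{lewintan2025wellposedness}, and then absorbs $\|p\|_0$ via \cref{Lem:KernelB} and the Poincaré estimate \eqref{eq:estimate_for_p_L2}; your cancellation of the $c$-terms, dropping of the nonnegative $S_i$ boundary terms, and Korn step just spell out what that citation supplies.
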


Finally, \cref{thm:inf_sup_modified_Maxwell} and \cref{Thm:Coercivity_NOW} collectively guarantee \cref{thm:steady-main} for Maxwell molecules.

\section{Well-posedness of time dependent problems}
\label{Sec:TimeDependent}
In this section, we establish the global well-posedness of the time-dependent problem \eqref{eq:time_dependent_compact_R13}. Specifically, for the homogeneous case ($\mathbf{g}_{\mathrm{ext}} = \mathbf{0}$), the existence and uniqueness of solutions are proved by invoking the Lumer-Phillips theorem \cite[Section 1.4]{pazy2012semigroups}. 
\subsection{Abstract operator formulation}
To apply the Lumer-Phillips theorem, we must first formulate the underlying abstract Cauchy problem. Let us introduce the base Hilbert space:
\begin{equation}
\mathcal{H}_{0}\coloneqq L^2(\Omega;\mathbb{R})\times L^2(\Omega;\mathbb{R})\times L^2(\Omega;\mathbb{R}^3)\times L^2(\Omega;\mathbb{R}^3)\times L^2(\Omega;\mathbb{R}_{\stf}^{3\times 3}),
\end{equation}
For any $t \ge 0$, the state vector is given by $\boldsymbol{\mathcal{U}}(\cdot, t) := (\rho, \theta, \boldsymbol{u}, \bar{\boldsymbol{s}}, \bar{\boldsymbol{\sigma}})^{T} \in \mathcal{H}_0$, and the corresponding inner product for any $\boldsymbol{\mathcal{U}}_1, \boldsymbol{\mathcal{U}}_2 \in \mathcal{H}_0$ is defined as:
\begin{equation}
    \label{eq:inner_product_H0}
    \innerprod{\boldsymbol{\mathcal{U}}_{1}}{\boldsymbol{\mathcal{U}_2}}_{\mathcal{H}_0}\coloneqq\int_{\Omega}\innerprod{\boldsymbol{\mathcal{U}}_1}{M\boldsymbol{\mathcal{U}}_2}_{\mathbf{V}}\dif\mathbf{x},
\end{equation}
Next, we specify the function spaces for the state variables depending on the molecular model. For non-Maxwell molecules, we define:
\begin{equation}
    \label{eq:space_1}
    \widetilde{\mathrm{U}}_{1}\coloneqq \mathrm{T}_{1}\times \mathrm{V}_{\rho}\quad\text{with}\quad \mathrm{V}_{\rho}\coloneqq L^2(\Omega;\mathbb{R}),
\end{equation}
where $\mathrm{T}_1$ is defined in \eqref{eq:grouping1} and \eqref{eq:continuous_spaces}. Conversely, for Maxwell molecules, the functional framework is adjusted to:
\begin{equation}
    \label{eq:space_2}
    \widetilde{\mathrm{U}}_2\coloneqq \mathrm{T}_2\times\mathrm{W}_2,\quad\text{where}\quad\mathrm{W}_2\coloneqq \mathrm{V}_{\boldsymbol{u}}\times\mathrm{V}_{\theta}\times \mathrm{V}_{\rho},\quad\text{and}\quad\mathrm{V}_{\rho}\coloneqq L^2(\Omega;\mathbb{R}).
\end{equation}
The constituent spaces $\mathrm{T}_2$, $\mathrm{V}_{\theta}$ and $\mathrm{V}_{\boldsymbol{u}}$ are established in \eqref{eq:grouping2} and \eqref{eq:continuous_spaces_Maxwell}. 
\begin{remark}
    Comparing \eqref{eq:space_1}, \eqref{eq:space_2} with \eqref{eq:grouping1}, \eqref{eq:grouping2}, the sole distinction is the omission of the zero-mean constraint for the pressure $p := \rho + \theta$. In the time-dependent regime, the mass conservation law guarantees that $\int_\Omega \rho(\mathbf{x}, t) \dif\mathbf{x} \equiv \int_\Omega \rho_0(\mathbf{x}) \dif\mathbf{x}$ for all $t \in I$. Since the mean value of $\rho(\cdot, t)$ is inherently determined by the initial state $\rho_0$, the zero-mean restriction is no longer required.
\end{remark}

For $i \in \{1, 2\}$, we construct the Gelfand triple $\widetilde{\mathrm{U}}_{i}\subset\mathcal{H}_{0}\cong\mathcal{H}_{0}^{*}\subset\widetilde{\mathrm{U}}_{i}^{*}$, where the state space $\mathcal{H}_0$ acts as the pivot space. 
Consequently, the duality pairing $\langle \cdot, \cdot \rangle_{\widetilde{\mathrm{U}}_i^*, \widetilde{\mathrm{U}}_i}$ serves as the unique continuous extension of the $\mathcal{H}_0$-inner product, meaning the identity $\langle \bm{\mathcal{U}}_1, \bm{\mathcal{U}}_2 \rangle_{\widetilde{\mathrm{U}}_i^*, \widetilde{\mathrm{U}}_i} = \langle \bm{\mathcal{U}}_1, \bm{\mathcal{U}}_2 \rangle_{\mathcal{H}_0}$ holds naturally for any $\bm{\mathcal{U}}_1, \bm{\mathcal{U}}_2 \in \widetilde{\mathrm{U}}_i$.

Recalling \eqref{eq:time_dependent_compact_R13}, we define the formal differential operator $\mathcal{L} := M^{-1}(\mathcal{D}(\nabla, \nabla) + S - \mathcal{A}(\nabla))$. Under the homogeneous boundary condition $\mathbf{g}_{\text{ext}} = \mathbf{0}$ and assuming a sufficiently smooth state $\bm{\mathcal{U}}$, applying the divergence theorem to the evolution equation $\bm{\mathcal{U}}_{t}=\mathcal{L}\bm{\mathcal{U}}$ yields: 
\begin{equation}
    \label{eq:div}
    \innerprod{\bm{\mathcal{U}}_{t}}{\bm{\mathcal{V}}}_{\mathcal{H}_0}=\innerprod{\mathcal{L}\bm{\mathcal{U}}}{\mathcal{V}}_{\mathcal{H}_0}=-\mathcal{B}(\bm{\mathcal{U}},\bm{\mathcal{V}}),\quad\forall i=1,2,
\end{equation}
where the bilinear form $\mathcal{B}$ is defined in \eqref{eq:whole_bilinear_form}. This motivates the definition of the corresponding weak operator $\mathcal{L}_{W,i}:\widetilde{\mathrm{U}}_i\rightarrow\widetilde{\mathrm{U}}_{i}^{*}$ as:
\begin{equation}
    \label{eq:weak_L}
    \langle \mathcal{L}_{W,i}\boldsymbol{\mathcal{U}}, \boldsymbol{\mathcal{V}} \rangle_{\widetilde{\mathrm{U}}_i^*, \widetilde{\mathrm{U}}_i} := -\mathcal{B}(\boldsymbol{\mathcal{U}}, \boldsymbol{\mathcal{V}}), \quad \forall \boldsymbol{\mathcal{U}}, \boldsymbol{\mathcal{V}} \in \widetilde{\mathrm{U}}_i.
\end{equation}
To cast the system into the abstract Cauchy framework within $\mathcal{H}_0$, it is necessary to restrict the unbounded operator $\mathcal{L}_{W,i}$ to a suitable domain. Thus, we define the domain $D(\mathcal{L}_{W,i})\subset\mathcal{H}_0$ as:
\begin{equation}
\label{eq:domain}
D(\mathcal{L}_{W,i}) := \{ \boldsymbol{\mathcal{U}} \in \widetilde{\mathrm{U}}_i : \exists \boldsymbol{\mathcal{F}}\in\mathcal{H}_0,\forall \boldsymbol{\mathcal{V}}\in\widetilde{\mathrm{U}}_{i}, \mathcal{B}(\boldsymbol{\mathcal{U}},\boldsymbol{\mathcal{V}})=\innerprod{\bm{\mathcal{F}}}{\bm{\mathcal{V}}}_{\widetilde{\mathrm{U}}_{i}^{*},\widetilde{\mathrm{U}}_{i}} \}.
\end{equation}

The abstract Cauchy problem can now be rigorously formulated as follows: find $\boldsymbol{\mathcal{U}}\in C([0,+\infty);D(\mathcal{L}_{W,i}))\cap C^1((0,+\infty);\mathcal{H}_0)$ such that 
\begin{equation}
\label{eq:homogeneous_R13_Cauchy}
\left\{
\begin{aligned} 
\frac{\dif}{\dif t}\bm{\mathcal{U}}(t)&= \mathcal{L}_{W,i} \bm{\mathcal{U}}(t), \quad t>0,\\ 
\bm{\mathcal{U}}(0) &= \bm{\mathcal{U}}_0.\\
\end{aligned}
\right.
\end{equation}
Here, the equation is understood as an equality in the Hilbert space $\mathcal{H}_0$. Since $\mathcal{H}_0$ is a Hilbert space, the Lumer-Phillips theorem dictates that the well-posedness of \eqref{eq:homogeneous_R13_Cauchy} reduces to verifying two properties for $\mathcal{L}_{W,i}$:
\begin{itemize}
        \item (Dissipativity): $\langle \mathcal{L}_{W,i}\bcal{U}, \bcal{U} \rangle_{\widetilde{\mathrm{U}}_i^*, \widetilde{\mathrm{U}}_i} \le 0$ for all $\bcal{U} \in D(\mathcal{L}_{W,i})$.
        \item (Range Condition): The operator $(\lambda_0 \mathcal{I} - \mathcal{L}_{W,i})$ is surjective from $D(\mathcal{L}_{W,i})$ to $\mathcal{H}_0$ for some $\lambda_0 > 0$, where $\mathcal{I}$ denotes the identity operator.
\end{itemize}
\begin{theorem}
    \label{Thm:dissipative}
    The differential operators $\mathcal{L}_{W,1}$, $\mathcal{L}_{W,2}$ are both dissipative.
\end{theorem}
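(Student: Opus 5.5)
By the definition \eqref{eq:weak_L}, dissipativity amounts to showing $\mathcal{B}(\bcal{U},\bcal{U})\ge 0$ for every $\bcal{U}\in D(\mathcal{L}_{W,i})$; in fact I will show it for every $\bcal{U}\in\widetilde{\mathrm{U}}_i$. The first step is to take the diagonal of \eqref{eq:whole_bilinear_form}. The two mixed terms $\mathscr{B}_i(\bcal{S},\bcal{W})-\mathscr{B}_i(\bcal{S},\bcal{W})$ cancel identically. Hence $\mathcal{B}(\bcal{U},\bcal{U})=\mathscr{A}_i(\bcal{S},\bcal{S})$, where $\bcal{S}$ is the $\mathrm{T}_i$-component of $\bcal{U}$. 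For the time-dependent problem the density $\rho$ enters only through $p=\rho+\theta$ in the $g$-coupling, so it contributes nothing to the diagonal either. I also need the skew terms inside $\mathscr{A}_i$ to cancel on the diagonal: $-c(\bbm{s},\bbm{\sigma})+c(\bbm{s},\bbm{\sigma})$, $-b(\theta,\bbm{s})+b(\theta,\bbm{s})$ and $e(\bm{u},\bbm{\sigma})-e(\bm{u},\bbm{\sigma})$ in \eqref{eq:expression_for_operators_1}, and $c(\bm{s},\bm{\sigma})-c(\bm{s},\bm{\sigma})$ in \eqref{eq:expression_for_operators_2}.

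For the non-Maxwell case ($i=1$), what remains is exactly $J_1+J_2$ from \eqref{eq:expression_for_A}. I split this into volume and boundary parts. The volume part is $-\mathcal{W}_1\ge0$: it is a sum of squares with nonnegative coefficients together with the two cross terms. Those cross terms are controlled by $25k_4^2\le 24k_3k_7$ and $3k_2^2\le k_1k_{10}$, which make the $2\times2$ quadratic forms in $((\nabla\bm{u})_{\stf},(\nabla\bbm{s})_{\stf})$ and in $(\nabla\theta,\nabla\cdot\bbm{\sigma})$ positive semidefinite. The boundary part is $\mathscr{A}_{\mathrm{bdry}}(\bcal{S},\bcal{S})$, which is nonnegative by \cref{Lem:coefficients_on_bdry}. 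Alternatively, I can quote \cref{Thm:Coercive_Part_1} and \cref{Thm:Coercive_Part_2} directly, which give $\mathscr{A}_1(\bcal{S},\bcal{S})\gtrsim\|\bcal{S}\|^2_{\mathrm{T}_1}\ge0$ under the strict constraints.

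For the Maxwell case ($i=2$), the diagonal reduces to $\mathscr{A}_2(\bcal{S},\bcal{S})=a(\bm{s},\bm{s})+d(\bm{\sigma},\bm{\sigma})$. In the Maxwell setting $k_1=\dots=k_4=0$, so no cross terms with $\theta$ or $\bm{u}$ survive. Each of $a$ and $d$ is a sum of nonnegative volume squares, because $k_6,k_7,k_9,k_{10}\ge0$ and $l_1,l_2>0$, plus boundary squares with coefficients $S_1,S_3,S_5,\dots,S_8\ge0$ by \cref{Lem:coefficients_on_bdry}. Therefore $\mathscr{A}_2(\bcal{S},\bcal{S})\ge0$. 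This holds on all of $\mathrm{T}_2$; it does not need the kernel restriction used in \cref{Thm:Coercivity_NOW}.

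The main obstacle I expect is the boundary sign in the Maxwell case. \cref{Lem:coefficients_on_bdry} was phrased through the non-Maxwell bookkeeping, with the terms $T_1$ and $T_2$ coupling to $\bm{u}$ and $\theta$. For Maxwell molecules I would re-derive the identity $\mathscr{A}_{\mathrm{bdry}}=-2(\tilde{Q}\mathcal{A}_nf_{\even},\mathcal{A}_nf_{\even})_b$ from \cref{subsec:Bdry_Term_for_Entropy}, now with $\mathbf{g}_{\ext}=\mathbf{0}$. That identity only uses the negative semidefiniteness of $\tilde{Q}$, so it holds for any molecular model. A secondary technical point is that the boundary traces are well defined for $\bcal{S}\in\mathrm{T}_i$, since $\bm{s}$ and $\bm{\sigma}$ (and $\bm{u}$, $\theta$ in the $i=1$ case) lie in $H^1$. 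The $H_0(\mathrm{div})$ velocity in the Maxwell case never appears in a boundary integral of $\mathscr{A}_2$.
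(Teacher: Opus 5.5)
Your proposal is correct and follows the paper's argument. The paper writes $\mathcal{B}(\bcal{U},\bcal{U})=I_{\mathrm{bdry}}-\mathcal{W}_1$ via the energy identity \eqref{eq:equivalence} and concludes from $\mathcal{W}_1\le 0$ together with the boundary nonnegativity of \cref{Lem:coefficients_on_bdry}; this is the same split as yours of $\mathscr{A}_i(\bcal{S},\bcal{S})$ into the volume part $-\mathcal{W}_1$ and $\mathscr{A}_{\mathrm{bdry}}(\bcal{S},\bcal{S})$, which you read off directly from the weak forms instead, and your caution about the Maxwell boundary coefficients addresses a point the paper passes over by applying the same lemma to both cases without comment.
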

\begin{proof}
    Let $i\in\{1,2\}$ and assume $\mathbf{g}_{\rm ext}=\mathbf{0}$. By the definition of $\mathcal{L}_{W,i}$, we have:
    $$\forall\bm{\mathcal{U}}\in D(\mathcal{L}_{W,i}),\quad\innerprod{\mathcal{L}_{W,i}\bm{\mathcal{U}}}{\bm{\mathcal{U}}}_{\tilde{\mathrm{U}}_{i}^{*},\tilde{\mathrm{U}}_i}=-\mathcal{B}(\bm{\mathcal{U}},\bm{\mathcal{U}}).$$
Since $\mathbf{g}_{\rm ext}=\mathbf{0}$, the boundary functional $\mathcal F_i$ vanishes. Using the energy identity~\eqref{eq:equivalence}, we have:
    \begin{equation}
        \mathcal{B}(\bm{\mathcal{U}},\bm{\mathcal{U}})=I_{\mathrm{bdry}}-\mathcal{W}_1.
    \end{equation}
The bulk entropy estimate in \cref{subsec:entropy} gives $\mathcal{W}_1\le 0$, while the homogeneous
Onsager boundary estimate in \cref{Lem:coefficients_on_bdry} gives $I_{\rm bdry}\ge 0$. Hence
$\mathcal B(\bm{\bcal{U}},\bm{\bcal{U}})\ge 0$, and therefore $\innerprod{\mathcal{L}_{W,i}\bm{\mathcal{U}}}{\bm{\mathcal{U}}}_{\tilde{\mathrm{U}}_{i}^{*},\tilde{\mathrm{U}}_i}\le 0$. Thus $\mathcal{L}_{W,i}$ is dissipative for $i=1,2$.
\end{proof}
\subsection{The range condition} Due to the distinct functional spaces $\widetilde{\mathrm{U}}_1$ and $\widetilde{\mathrm{U}}_2$, we must verify the range condition for the operators $\mathcal{L}_{W,1}$ and $\mathcal{L}_{W,2}$ independently.
\subsubsection{Non-Maxwell molecules} For non-Maxwell molecules, we decompose the state vector as $\bm{\mathcal{U}}=(\bm{\mathcal{S}},p)$, where $\bm{\mathcal{S}}\in\mathrm{T}_1$ and $p\coloneqq\rho+\theta\in L^2(\Omega)$. Notably, when evaluating the bilinear form at $(\bcal{U}, \bcal{U})$, the pressure-related cross terms cancel out exactly, yielding:
\begin{equation}
    \label{eq:bilinear_for_nM}
    \mathcal{B}(\bm{\mathcal{U}},\bm{\mathcal{U}})=\mathscr{A}_1(\bm{\mathcal{S}},\bm{\mathcal{S}})-g(\bm{\mathcal{S}},p)+g(\bm{\mathcal{S}},p)=\mathscr{A}_1(\bm{\mathcal{S}},\bm{\mathcal{S}}).
\end{equation}
This structural property directly yields the following lemma for the range condition.
\begin{theorem}
    \label{Lem:RC_of_nM}
     Under the strict constraints $3k_2^2 < k_1k_{10}$ and $25k_4^2 < 24k_3k_7$, the operator $\mathcal{L}_{W,1}$ satisfies the range condition.
\end{theorem}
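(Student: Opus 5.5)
Fix $\lambda_0>0$ and, given $\bcal{F}\in\mathcal{H}_0$, seek $\bcal{U}=(\bcal{S},p)\in\widetilde{\mathrm{U}}_1$ with
\begin{equation*}
\lambda_0\innerprod{\bcal{U}}{\bcal{V}}_{\mathcal{H}_0}+\mathcal{B}(\bcal{U},\bcal{V})=\innerprod{\bcal{F}}{\bcal{V}}_{\mathcal{H}_0}\qquad\forall\,\bcal{V}\in\widetilde{\mathrm{U}}_1 .
\end{equation*}
Such a $\bcal{U}$ lies in $D(\mathcal{L}_{W,1})$ by \eqref{eq:domain}, because $\mathcal{B}(\bcal{U},\cdot)=\innerprod{\bcal{F}-\lambda_0\bcal{U}}{\cdot}_{\mathcal{H}_0}$ with $\bcal{F}-\lambda_0\bcal{U}\in\mathcal{H}_0$. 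By \eqref{eq:weak_L}, this identity is exactly $(\lambda_0\mathcal{I}-\mathcal{L}_{W,1})\bcal{U}=\bcal{F}$. The range condition therefore reduces to solvability of this variational problem, and I would obtain that from the Lax--Milgram lemma applied to
\begin{equation*}
\mathcal{B}_{\lambda_0}(\bcal{U},\bcal{V}):=\lambda_0\innerprod{\bcal{U}}{\bcal{V}}_{\mathcal{H}_0}+\mathcal{B}(\bcal{U},\bcal{V})
\end{equation*}
on the Hilbert space $\widetilde{\mathrm{U}}_1=\mathrm{T}_1\times L^2(\Omega)$, with the norm $\|\bcal{S}\|_{\mathrm{T}_1}^2+\|\rho\|_0^2$ or, equivalently, with $p$ in place of $\rho$. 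Unlike the steady case, $\bcal{U}$ here must be expressed through $\rho$ (not $p$) in the $\mathcal{H}_0$ pairing. Since $p=\rho+\theta$ and $\theta\in H^1$, the two parametrizations give equivalent norms, so I will work with whichever is convenient.

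Continuity of $\mathcal{B}_{\lambda_0}$ is immediate. The $\mathscr{A}_1$ part is continuous as in \cref{sec:stablenM_wellposedness}. The terms $g(p,\bm v)=(p,\nabla\cdot\bm v)$ are bounded by $\|p\|_0\|\bm v\|_1$. The $\mathcal{H}_0$ term is trivially bounded. The right-hand side $\bcal{V}\mapsto\innerprod{\bcal{F}}{\bcal{V}}_{\mathcal{H}_0}$ is continuous on $\widetilde{\mathrm{U}}_1$ because $\widetilde{\mathrm{U}}_1\hookrightarrow\mathcal{H}_0$.

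For coercivity, I use \eqref{eq:bilinear_for_nM}: the pressure cross terms cancel on the diagonal, so $\mathcal{B}(\bcal{U},\bcal{U})=\mathscr{A}_1(\bcal{S},\bcal{S})$. Under the strict constraints $3k_2^2<k_1k_{10}$ and $25k_4^2<24k_3k_7$, \cref{Thm:Coercive_Part_1} and \cref{Thm:Coercive_Part_2} give $\mathscr{A}_1(\bcal{S},\bcal{S})\gtrsim\|\bcal{S}\|_{\mathrm{T}_1}^2$. The only missing component is the density, and it is supplied by the mass term:
\begin{equation*}
\lambda_0\|\bcal{U}\|_{\mathcal{H}_0}^2\ge\lambda_0\|\rho\|_0^2 .
\end{equation*}
Hence
\begin{equation*}
\mathcal{B}_{\lambda_0}(\bcal{U},\bcal{U})\gtrsim\|\bcal{S}\|_{\mathrm{T}_1}^2+\lambda_0\|\rho\|_0^2\gtrsim_{\lambda_0}\|\bcal{U}\|_{\widetilde{\mathrm{U}}_1}^2 .
\end{equation*}
If one prefers to control $p$, use $\|p\|_0\le\|\rho\|_0+\|\theta\|_0$. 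Lax--Milgram then yields a unique solution, which gives surjectivity of $\lambda_0\mathcal{I}-\mathcal{L}_{W,1}$ for every $\lambda_0>0$.

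The main obstacle is making sure that the coercivity of $\mathscr{A}_1$ from the steady analysis carries over to the present space, which differs from the steady one. First, $p$ is no longer in $L^2_0$. This is harmless because $\mathscr{A}_1$ does not involve $p$, and the inf-sup condition is not needed here since the mass term controls $\rho$ directly. Second, the boundary data must be homogeneous. This holds because $\mathbf{g}_{\rm ext}=\mathbf{0}$ makes $\mathscr{F}_1$ vanish and the boundary quadratic form is exactly $\mathscr{A}_{\rm bdry}\ge0$ by \cref{Lem:coefficients_on_bdry}. I would check explicitly that the constant in \cref{Thm:Coercive_Part_2} depends on the Poincaré--Friedrichs inequality $\|\nabla\theta\|_0^2+\|\theta\|_b^2\gtrsim\|\theta\|_1^2$, and that this requires $S_4>0$ or a positive definite boundary block. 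If that degenerates, the $\lambda_0\tfrac32\|\theta\|_0^2$ contribution of the mass term can replace the boundary control of $\theta$.
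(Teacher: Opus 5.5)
Your proposal is correct and follows the paper's proof essentially step for step. You apply Lax--Milgram to $\mathcal{B}_{\lambda_0}=\lambda_0\innerprod{\cdot}{\cdot}_{\mathcal{H}_0}+\mathcal{B}$ on $\widetilde{\mathrm{U}}_1$, get coercivity from the diagonal cancellation \eqref{eq:bilinear_for_nM}, \cref{Thm:Coercive_Part_1,Thm:Coercive_Part_2} and the $\lambda_0$-term for the density, and read off membership in $D(\mathcal{L}_{W,1})$ from $\mathcal{B}(\bcal{U},\cdot)=\innerprod{\bcal{F}-\lambda_0\bcal{U}}{\cdot}_{\mathcal{H}_0}$. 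Your side remark that the $\lambda_0$-term can supply the $L^2$ control of $\theta$ is a real robustness gain: it works likewise for $\bm{u}$ through the interior Korn inequality \eqref{eq:Korn_type_inequality}, so the boundary-augmentation step in those coercivity proofs is no longer needed.
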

\begin{proof}
 To establish the range condition, we must demonstrate that for any $\mathcal{F} \in \mathcal{H}_0$, there exists a state $\bm{\mathcal{U}}\in D(\mathcal{L}_{W,1})$ satisfying $(\lambda_0 \mathcal{I} - \mathcal{L}_{W,1})\bcal{U} = \bcal{F}$ for some $\lambda_0 > 0$. 
We cast this into its equivalent weak formulation: find $\bm{\mathcal{U}}\in\widetilde{\mathrm{U}}_{1}$ such that:
\begin{equation}
    \label{eq:weak_form}
    \mathcal{B}_{\lambda_0}(\bm{\mathcal{U}}, \bm{\mathcal{V}})\coloneqq \lambda_0 \langle \bcal{U}, \bcal{V} \rangle_{\mathcal{H}_0} +\mathcal{B}(\bcal{U},\bcal{V})=\innerprod{\bm{\mathcal{F}}}{\bcal{V}}_{\mathcal{H}_0},\quad\forall\bcal{V}\in\widetilde{\mathrm{U}}_1,
\end{equation}
then show the solution $\bcal{U}$ of \eqref{eq:weak_form} satisfies $\bcal{U}\in D(\mathcal{L}_{W,1})$.

Testing this formulation with $\bcal{V} = \bcal{U}$ and applying  \cref{Thm:Coercive_Part_1}, \cref{Thm:Coercive_Part_2} alongside \eqref{eq:bilinear_for_nM}, we obtain:
\begin{equation}
    \label{eq:coercive_for_Blambda}
    \begin{aligned}
    \mathcal{B}_{\lambda_0}(\bcal{U}, \bcal{U}) &= \lambda_0 \|\bcal{U}\|_{\mathcal{H}_0}^2 + \mathscr{A}_1(\bcal{S},\bcal{S}) \\
    &\gtrsim\lambda_0\|\bcal{U}\|_{\mathcal{H}_0}^2+\|\bar{\bm{s}}\|_{1}^2+\|\bm{u}\|_{1}^2+\|\bar{\bm{{\sigma}}}\|_{1}^2+\|\theta\|_{1}^2\gtrsim\|\bcal{U}\|_{\widetilde{\mathrm{U}}_1}^2.
    \end{aligned}
\end{equation}
This bound confirms that the continuous bilinear form $\mathcal{B}_{\lambda_0}$ is strictly coercive on $\widetilde{\mathrm{U}}_1$. Since $\mathcal{B}$ is continuous on $\widetilde{\mathrm{U}}_1$, by the Lax-Milgram theorem, there exists a unique weak solution $\bm{\mathcal{U}} \in \widetilde{\mathrm{U}}_1$ to the problem \eqref{eq:weak_form}.

Finally, rearranging the weak formulation gives:
\begin{equation}
    \label{eq:weak_solution_in_DL}
\mathcal{B}(\bm{\mathcal{U}}, \bm{\mathcal{V}}) = \langle \bm{\mathcal{F}} - \lambda_0\bm{\mathcal{U}}, \bm{\mathcal{V}}\rangle_{\mathcal{H}_0}, \quad \forall \bm{\mathcal{V}} \in \widetilde{\mathrm{U}}_1.
\end{equation}
Given that $\bm{\mathcal{U}},\bm{\mathcal{F}}\in\mathcal{H}_0$, it follows that the right-hand side representer $\bm{\mathcal{F}} - \lambda_0\bm{\mathcal{U}} \in \mathcal{H}_0$. Referring to the domain definition in \eqref{eq:domain}, this equality confirms that $\bm{\mathcal{U}} \in D(\mathcal{L}_{W,1})$. 
\end{proof}
\begin{remark}
    Compared to the steady-state formulation in \cref{sec:stablenM_wellposedness}, the space $\widetilde{\mathrm{U}}_1$ relaxes the zero-mean constraint for the pressure $p:= \rho + \theta$. The strictly positive resolvent parameter $\lambda_0 > 0$ compensates for this relaxation by inherently providing the requisite $L^2$-coercivity across the entire space.
\end{remark}
\subsubsection{Maxwell molecules} For the system of Maxwell molecules, the operator $\mathcal{B}_{\lambda_0}$ isn't strictly coercive on $\widetilde{\mathrm{U}}_2$. So we should apply the Banach-Nečas-Babuška theorem \cite{Alexandre2004PractiseFE} to examine the range condition for $\mathcal{L}_{W,2}$. 
\begin{theorem}
    \label{Lem:RC_of_Max}
    The operator $\mathcal{L}_{W,2}$ satisfies the range condition on $D(\mathcal{L}_{W,2})$.
\end{theorem}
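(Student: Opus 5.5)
We follow the non-Maxwell argument but replace Lax--Milgram by a saddle-point (Brezzi/BNB) argument. Given $\bcal{F}\in\mathcal{H}_0$ and a fixed $\lambda_0>0$, we seek $\bcal{U}=(\bcal{S},\bcal{W})\in\widetilde{\mathrm{U}}_2$, with $\bcal{S}=(\bm{\sigma},\bm{s},p)\in\mathrm{T}_2$ and $\bcal{W}=(\bm{u},\theta,\rho)\in\mathrm{W}_2$, such that
\begin{equation*}
\mathcal{B}_{\lambda_0}(\bcal{U},\bcal{V})\coloneqq\lambda_0\innerprod{\bcal{U}}{\bcal{V}}_{\mathcal{H}_0}+\mathcal{B}(\bcal{U},\bcal{V})=\innerprod{\bcal{F}}{\bcal{V}}_{\mathcal{H}_0}\quad\forall\bcal{V}\in\widetilde{\mathrm{U}}_2.
\end{equation*}
Once this is solved, the same rearrangement as in the proof of \cref{Lem:RC_of_nM} gives $\mathcal{B}(\bcal{U},\cdot)=\innerprod{\bcal{F}-\lambda_0\bcal{U}}{\cdot}_{\mathcal{H}_0}$. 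Hence $\bcal{U}\in D(\mathcal{L}_{W,2})$, which is the range condition.

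To apply the BNB theorem, we first set up the variables. We pass from $(\rho,\theta)$ to $(p,\theta)$; the map is a bounded isomorphism, so the mass term $\lambda_0\innerprod{\cdot}{\cdot}_{\mathcal{H}_0}$ becomes an equivalent positive definite $L^2$ form in these variables. With $p$ no longer mean-zero, $\mathcal{B}_{\lambda_0}$ is a perturbed saddle-point form. Its first block is $\mathscr{A}_2+\lambda_0 m_{\mathrm{T}}$ on $\mathrm{T}_2$, where $m_{\mathrm{T}}$ is the $L^2$ mass part. Its constraint is $\mathscr{B}_2$. It also carries a $-\lambda_0 m_{\mathrm{W}}$-type $L^2$ term on the multiplier side. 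That term is nonnegative in the symmetric sense, since $\mathcal{B}_{\lambda_0}(\bcal{U},\bcal{U})=\lambda_0\|\bcal{U}\|_{\mathcal{H}_0}^2+\mathscr{A}_2(\bcal{S},\bcal{S})$.

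The core of the proof is the inf-sup estimate $\sup_{\bcal{V}}\mathcal{B}_{\lambda_0}(\bcal{U},\bcal{V})/\|\bcal{V}\|\gtrsim\|\bcal{U}\|_{\widetilde{\mathrm{U}}_2}$. We build the test function in three pieces.

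\textbf{(i) Diagonal piece.} Take $\bcal{V}_1=\bcal{U}$. This gives
\begin{equation*}
\lambda_0\|\bcal{U}\|_{\mathcal{H}_0}^2+\mathscr{A}_2(\bcal{S},\bcal{S})\gtrsim \|\bcal{U}\|_{\mathcal{H}_0}^2+\|\bm{s}\|_1^2+\|\bm{\sigma}\|_1^2.
\end{equation*}
This uses the coercivity mechanism behind \cref{Thm:Coercivity_NOW}, i.e. \cite[Theorem 4.3]{lewintan2025wellposedness}: the Korn-type inequalities \eqref{eq:Korn_type_inequality} plus the $L^2$ relaxation terms $\frac{4l_1}{15\Kn}$ and $\frac{l_2}{2\Kn}$. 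Crucially, here we do not need the kernel, because the $L^2$ parts of $\bm{s},\bm{\sigma}$ are supplied by $\lambda_0$ and the relaxation.

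\textbf{(ii) Piece for $\nabla\cdot\bm{u}$.} This norm is still missing. We add $\bcal{V}_2=(\bm{0},\bm{0},q,\bm{0},0,0)$ with $q=\nabla\cdot\bm{u}$, exactly as in \cref{thm:inf_sup_modified_Maxwell}. The $g(q,\bm{u})$ term then yields $\|\nabla\cdot\bm{u}\|_0^2$. The perturbation $\lambda_0 (p,q)$ is absorbed by Young's inequality against the already controlled $\|p\|_0$.

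\textbf{(iii) Pieces for $\bm{u}$ and $\theta$.} The $\mathcal{H}_0$ term already controls these in $L^2$, so step (ii) gives the full $H(\mathrm{div})$ norm of $\bm{u}$. If a stronger bound is needed, we add $\bm{\tau},\bm{r}$ with $-\nabla\cdot\bm{\tau}=\bm{u}$ and $-\nabla\cdot\bm{r}=\theta$, again from \cref{thm:inf_sup_modified_Maxwell}.

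The combination $\bcal{V}=\bcal{V}_1+\delta\bcal{V}_2$, with $\delta$ small, has norm $\lesssim\|\bcal{U}\|$. The cross terms it produces are bounded by continuity and absorbed.

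The second BNB condition is that $\mathcal{B}_{\lambda_0}(\bcal{U},\bcal{V})=0$ for all $\bcal{U}$ implies $\bcal{V}=0$. This follows by the same construction applied to the adjoint, since $\mathcal{B}_{\lambda_0}$ has the same structure with the signs of the skew parts flipped. More directly, taking $\bcal{U}=\bcal{V}$ gives $\lambda_0\|\bcal{V}\|_{\mathcal{H}_0}^2\le0$.

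The main obstacle is step (ii) together with the absorption of the cross terms. Coupling boundary terms from $c$ and $e$ between $\bm{s},\bm{\sigma}$ and the tangential traces of $\bm{v}$ cannot appear in the $H(\mathrm{div})$ setting; we must check that every remaining coupling involves only $H^1$ variables, so that continuity estimates hold with the chosen $\delta$.
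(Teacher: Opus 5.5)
Your argument is correct. It follows the paper's overall scheme: BNB for the resolvent form on the space without the mean-zero constraint, a test function of the form ``diagonal plus $\delta$ times a perturbation'', and injectivity from $\mathcal{B}_{\lambda_0}(\bcal{U},\bcal{U})\ge\lambda_0\|\bcal{U}\|_{\mathcal{H}_0}^2$. Your perturbation, however, is leaner than the paper's.

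\textbf{The paper's perturbation.} The paper adds $\delta\bcal{R}_{\bcal{W}}$, the full inf-sup element from \cref{thm:inf_sup_modified_Maxwell}. This is $(\bm{\tau},\bm{r},q)$ with $-\nabla\cdot\bm{\tau}=\bm{u}$, $-\nabla\cdot\bm{r}=\theta$ and $q=\nabla\cdot\bm{u}$. That choice needs the stable right inverse of the divergence. It also generates the cross term $\mathscr{A}_2(\bcal{S},\bcal{R}_{\bcal{W}})$, which the paper bounds by Cauchy--Schwarz for a form $\mathcal{C}_{\lambda_0}$ it calls symmetric. Strictly, the $c$-terms in $\mathscr{A}_2$ are skew, so one really uses continuity of $c$ together with the $H^1$ control of $\bm{s},\bm{\sigma}$ from the diagonal.

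\textbf{Your perturbation.} You observe that $\lambda_0>0$ already controls $\bm{u}$ and $\theta$ in $L^2$. So only $\|\nabla\cdot\bm{u}\|_0$ is missing, and only the pressure slot $q=\nabla\cdot\bm{u}$ is needed. Then $\mathscr{A}_2$ does not see the perturbation at all. The single cross term is the mass pairing $\lambda_0(\rho,\nabla\cdot\bm{u})$ with $\rho=p-\theta$, absorbed by Young's inequality. No divergence right-inverse is required.

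\textbf{What each buys.} The paper's route gets $(\bm{u},\theta)$ from the inf-sup constant $\alpha_0$ of $\mathscr{B}_2$ rather than from the $\lambda_0$-mass term, mirroring the steady-state saddle-point analysis. Lumer--Phillips needs only one fixed $\lambda_0$, so your simpler route is enough.

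One correction to your last paragraph: the boundary-trace issue you call the main obstacle does not enter your perturbation step, since $\bcal{V}_2$ touches only the pressure slot. It concerns only the continuity of $\mathcal{B}_{\lambda_0}$ with $\mathrm{V}_{\bm{u}}=H_0(\mathrm{div},\Omega)$. That continuity holds because, in the Maxwell case, $\mathscr{B}_2$ consists only of the volume integrals displayed in the proof of \cref{thm:inf_sup_modified_Maxwell}, with no tangential trace of $\bm{v}$. The boundary terms of $a$, $c$ and $d$ involve only the $H^1$ variables $\bm{s}$ and $\bm{\sigma}$.
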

\begin{proof}
Relaxing the zero-mean restriction on $p$, we define the unconstrained space $\mathrm{U}_2 := \mathrm{T}_2 \times \mathrm{W}$. Since $\widetilde{\mathrm{U}}_2\cong\mathrm{U}_2$, for any given $\bcal{F} \in \mathcal{H}_0$, the weak formulation of the resolvent equation $\lambda_0 \bcal{U} - \mathcal{L}_{W,2}\bcal{U} = \bcal{F}$ reads: find $\bcal{U} = (\bcal{S}, \bcal{W}) \in \mathrm{U}_2$ such that
\begin{equation}
    \mathcal{B}_{\lambda_0}(\bcal{U}, \bcal{T}) = \langle \bcal{F}, \bcal{T} \rangle_{\mathcal{H}_0}, \quad \forall\bcal{T}=(\bcal{R},\bcal{V}) \in \mathrm{U}_2,
\end{equation}
where the full resolvent bilinear form is given by:
\begin{equation}
    \mathcal{B}_{\lambda_0}(\bcal{U}, \bcal{T}) := \lambda_0 \innerprod{\bcal{U}}{\bcal{T}}_{\mathcal{H}_0} + \mathscr{A}_2(\bcal{S}, \bcal{R}) + \mathscr{B}_2(\bcal{R}, \bcal{W}) - \mathscr{B}_2(\bcal{S}, \bcal{V}).
\end{equation}
Since $\mathscr{B}_{\lambda_0}$ is a continuous bilinear form, applying the Banach-Nečas-Babuška (BNB) theorem requires establishing the global inf-sup condition (weak coercivity) over $\mathrm{U}_2$:
\begin{equation}
    \label{eq:global_infsup}
    \inf_{\bcal{U}\in\mathrm{U}_2\setminus\{ \mathbf{0}\}}\sup_{\bcal{T}\in\mathrm{U}_2\setminus\{\mathbf{0}\}}\frac{\mathcal{B}_{\lambda_0}(\bcal{U},\bcal{T})}{\|\bcal{U}\|_{\mathrm{U}_2}\|\bcal{T}\|_{\mathrm{U}_2}}\ge C_{0}>0.
\end{equation}
For any given non-zero $\bcal{U}\coloneqq(\bcal{S},\bcal{W})\in\mathrm{U}_2$, Theorem \ref{thm:inf_sup_modified_Maxwell} guarantees the existence of a specific $\bcal{R}_{\bcal{W}}\in\mathrm{T}_2$ with $\|\bcal{R}_{\bcal{W}}\|_{\mathrm{T}_2}=\|\bcal{W}\|_{\mathrm{W}}$ such that
\begin{equation}
\label{eq:infsup_of_B}
    \exists \alpha_0>0,\quad|\mathscr{B}_2(\bcal{R}_{\bcal{W}},\bcal{W})|= \alpha_{0}\|\bcal{R}_{\bcal{W}}\|_{\mathrm{T}_2}\|\bcal{W}\|_{\mathrm{W}}.
\end{equation}
We explicitly construct the test function $\bcal{T} := (\bcal{S} + \delta \bcal{R}_{\bcal{W}}, \bcal{W}) \in \mathrm{U}_2$, where $\delta > 0$ is an undetermined parameter. Substituting $\bcal{T}$ into the bilinear form, we can split the expression into the diagonal evaluation and a perturbation term:
\begin{equation}
\mathcal{B}_{\lambda_0}(\bcal{U}, \bcal{T}) = \mathcal{B}_{\lambda_0}(\bcal{U}, \bcal{U}) + \mathcal{B}_{\lambda_0}(\bcal{U}, (\delta\bcal{R}_{\bcal{W}}, \boldsymbol{0})).
\end{equation}
To rigorously control the perturbation term, we isolate the symmetric part of $\mathcal{B}_{\lambda_0}$. We define the symmetric, positive semi-definite bilinear form $\mathcal{C}_{\lambda_0}:\mathrm{U}_2\times\mathrm{U}_2\rightarrow\mathbb{R}$ as:
\begin{equation}
    \mathcal{C}_{\lambda_{0}}(\bcal{U},\bcal{T})\coloneqq \lambda_0 \innerprod{\bcal{U}}{\bcal{T}}_{\mathcal{H}_0} + \mathscr{A}_2(\bcal{S}, \bcal{R}).
\end{equation}
Observe that the diagonal evaluation satisfies $\mathcal{C}_{\lambda_0}(\bcal{U}, \bcal{U}) = \mathcal{B}_{\lambda_0}(\bcal{U}, \bcal{U}) \gtrsim \|\bcal{S}\|_{\mathrm{T}_2}^2 + \|\boldsymbol{u}\|_0^2 + \|\theta\|_0^2$. Using this symmetric form, the perturbation term expands to:
\begin{equation}
\mathcal{B}_{\lambda_0}(\bcal{U}, (\delta\bcal{R}_{\bcal{W}}, \boldsymbol{0})) = \mathcal{C}_{\lambda_0}(\bcal{U}, (\delta\bcal{R}_{\bcal{W}}, \boldsymbol{0})) + \delta\mathscr{B}_2(\bcal{R}_{\bcal{W}}, \bcal{W}).
\end{equation}
Since $\mathcal{C}_{\lambda_0}$ is positive semi-definite, it admits the generalized Cauchy-Schwarz inequality. Combining this with Young's inequality, we bound the symmetric cross term
\begin{equation}
        \mathcal{C}_{\lambda_0}(\bcal{U},(\delta\bcal{R}_{\bcal{W}},0))
        \ge - \tfrac{1}{2} \mathcal{C}_{\lambda_0}(\bcal{U},\bcal{U})-2\delta^2\mathcal{C}_{\lambda_0}((\bcal{R}_{\bcal{W}},0),(\bcal{R}_{\bcal{W}},0)).
\end{equation}
Furthermore, by the continuity of $\mathscr{A}_2$ and the inner product, there exists a generic constant $C_1 > 0$ such that $\mathcal{C}_{\lambda_0}((\bcal{R}_{\bcal{W}}, \boldsymbol{0}), (\bcal{R}_{\bcal{W}}, \boldsymbol{0})) \le C_1\|\bcal{R}_{\bcal{W}}\|_{\mathrm{T}_2}^2$. Combining these estimates and substituting them back into the expansion of $\mathscr{B}_{\lambda_0}(\mathcal{U}, \mathcal{T})$, we obtain:
\begin{equation}
    \begin{aligned}
\mathcal{B}_{\lambda_0}(\bcal{U}, \bcal{T}) &= \mathcal{B}_{\lambda_0}(\bcal{U}, \bcal{U}) + \mathcal{C}_{\lambda_0}(\bcal{U}, (\delta\bcal{R}_{\bcal{W}}, \boldsymbol{0})) + \delta\mathscr{B}_2(\bcal{R}_{\bcal{W}}, \bcal{W}) \\
&\ge \tfrac{1}{2}\mathcal{B}_{\lambda_0}(\bcal{U}, \bcal{U}) + \delta\mathscr{B}_2(\bcal{R}_{\bcal{W}}, \bcal{W}) - 2C_1\delta^2\|\bcal{R}_{\bcal{W}}\|_{\mathrm{T}_2}^2.
\end{aligned}
\end{equation}
Recalling from \cref{thm:inf_sup_modified_Maxwell} that $\mathscr{B}_2(\bcal{R}_{\bcal{W}}, \bcal{W}) \ge \alpha_0\|\bcal{W}\|_{\mathrm{W}}^2$ and $\|\bcal{R}_{\bcal{W}}\|_{\mathrm{T}_2} = \|\bcal{W}\|_{\mathrm{W}}$, the inequality simplifies directly to:
\begin{equation}
    \mathcal{B}_{\lambda_0}(\bcal{U}, \bcal{T}) \gtrsim \|\bcal{S}\|_{\mathrm{T}_2}^2 + (\delta\alpha_0 - 2C_1\delta^2)\|\bcal{W}\|_{\mathrm{W}}^2.
\end{equation}

By choosing $\delta > 0$ sufficiently small such that $\delta\alpha_0 - 2C_1\delta^2 > 0$, we ensure $\mathcal{B}_{\lambda_0}(\bcal{U}, \bcal{T}) \gtrsim \|\bcal{S}\|_{\mathrm{T}_2}^2 + \|\bcal{W}\|_{\mathrm{W}}^2 = \|\bcal{U}\|_{\mathrm{U}_2}^2$. Furthermore, the triangle inequality directly bounds the test function: $\|\bcal{T}\|_{\mathrm{U}_2} \le \|\bcal{S}\|_{\mathrm{T}_2} + \delta\|\bcal{R}_{\bcal{W}}\|_{\mathrm{T}_2} + \|\bcal{W}\|_{\mathrm{W}} \lesssim \|\bcal{U}\|_{\mathrm{U}_2}$. Combining these inequalities immediately yields the required inf-sup condition.

Since $\mathcal{B}_{\lambda_0}(\bcal{U}, \bcal{U}) > 0$ for all non-zero $\bcal{U} \in \mathrm{U}_2$, injectivity is inherently satisfied. Thus, the BNB theorem applies, establishing that $\lambda_0\mathcal{I} - \mathcal{L}_{\text{W},2}$ is surjective from $\mathrm{U}_2$ to $\mathcal{H}_0$. Following the same algebraic deduction as in (5.13), this surjectivity seamlessly restricts to $D(\mathcal{L}_{\text{W},2})$, concluding the proof.
\end{proof}

From \cref{Thm:dissipative,Lem:RC_of_nM,Lem:RC_of_Max}, we obtain the final conclusion of this section.
\begin{theorem}[Global well-posedness of the time-dependent problem]
\label{thm:time-dependent-main}
For \(i=1,2\), the operator \(\mathcal L_{W,i}\) generates a strongly continuous
contraction semigroup \(\mathcal S_i(t)\) on \(\mathcal{H}_0\). Consequently, for any
initial data \(\bcal U_0\in D(\mathcal{L}_{W,i})\), the homogeneous abstract Cauchy
problem~\eqref{eq:homogeneous_R13_Cauchy} admits a unique classical solution
\begin{equation}
    \bcal U(t)=\mathcal S_i(t)\bcal U_0
    \in C([0,\infty);D(\mathcal L_{W,i}))
    \cap C^1((0,\infty);\mathcal H_0).
\end{equation}
Moreover, the solution obeys the contraction estimate $\|\bcal{U}(t)\|_{\mathcal{H}_0}\le\|\bcal{U}_0\|_{\mathcal{H}_0}$ for $t>0$.
\end{theorem}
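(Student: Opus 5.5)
The plan is to apply the Lumer--Phillips theorem \cite[Section 1.4]{pazy2012semigroups} on the Hilbert space $\mathcal{H}_0$, with the weighted inner product \eqref{eq:inner_product_H0}. The ingredients are dissipativity (\cref{Thm:dissipative}) and the range condition (\cref{Lem:RC_of_nM} for $i=1$, \cref{Lem:RC_of_Max} for $i=2$). Two further hypotheses of the theorem are not stated explicitly earlier, so I will check them first: that $D(\mathcal{L}_{W,i})$ is dense in $\mathcal{H}_0$, and that the operator is well defined as a map $D(\mathcal{L}_{W,i})\to\mathcal{H}_0$.

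For well-definedness, take $\bcal{U}\in D(\mathcal{L}_{W,i})$. The representer $\bcal{F}$ in \eqref{eq:domain} is unique, because $\widetilde{\mathrm{U}}_i$ is dense in $\mathcal{H}_0$: each component space contains $C_c^\infty$, which is dense in $L^2$, and the constraints $u_n=0$ and the mean-free condition are not imposed in the time-dependent spaces. We then set $\mathcal{L}_{W,i}\bcal{U}:=-\bcal{F}$, which is consistent with \eqref{eq:weak_L}.

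For density of the domain I would use the standard Hilbert-space argument for maximal dissipative operators. Fix $\lambda_0>0$ from the range condition and suppose $\bcal{G}\in\mathcal{H}_0$ is orthogonal to $D(\mathcal{L}_{W,i})$. By the range condition, solve $(\lambda_0\mathcal{I}-\mathcal{L}_{W,i})\bcal{U}=\bcal{G}$ with $\bcal{U}\in D(\mathcal{L}_{W,i})$. Then
\[
0=\langle \bcal{G},\bcal{U}\rangle_{\mathcal{H}_0}=\lambda_0\|\bcal{U}\|_{\mathcal{H}_0}^2+\mathcal{B}(\bcal{U},\bcal{U})\ge\lambda_0\|\bcal{U}\|_{\mathcal{H}_0}^2,
\]
where the inequality is the dissipativity of \cref{Thm:dissipative}. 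Hence $\bcal{U}=0$ and so $\bcal{G}=0$. This is exactly \cite[Theorem 1.4.6]{pazy2012semigroups}: in a reflexive space, a dissipative operator with $\operatorname{Ran}(\lambda_0\mathcal{I}-A)$ equal to the whole space is densely defined. Alternatively, one can argue directly from the density of $\widetilde{\mathrm{U}}_i$ and the coercivity or inf-sup property of $\mathcal{B}_{\lambda_0}$. The range condition at one $\lambda_0>0$ suffices for Lumer--Phillips, so $\mathcal{L}_{W,i}$ generates a $C_0$ contraction semigroup $\mathcal{S}_i(t)$.

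The remaining conclusions are standard semigroup facts. For $\bcal{U}_0\in D(\mathcal{L}_{W,i})$, the function $\bcal{U}(t)=\mathcal{S}_i(t)\bcal{U}_0$ stays in the domain, lies in $C^1([0,\infty);\mathcal{H}_0)$, and solves \eqref{eq:homogeneous_R13_Cauchy} \cite[Theorem 1.2.4]{pazy2012semigroups}. It is continuous into $D(\mathcal{L}_{W,i})$ with the graph norm, since $\mathcal{L}_{W,i}\bcal{U}(t)=\mathcal{S}_i(t)\mathcal{L}_{W,i}\bcal{U}_0$ is continuous. For uniqueness, let $\bcal{V}$ be the difference of two classical solutions. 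Then
\[
\tfrac{\dif}{\dif t}\|\bcal{V}\|_{\mathcal{H}_0}^2=2\langle\mathcal{L}_{W,i}\bcal{V},\bcal{V}\rangle\le0 ,
\]
and $\bcal{V}(0)=0$, so $\bcal{V}\equiv0$. The contraction estimate $\|\mathcal{S}_i(t)\|\le1$ is part of the Lumer--Phillips conclusion.

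The main obstacle is to make sure that the dissipativity pairing in \cref{Thm:dissipative} coincides with the $\mathcal{H}_0$ inner product, that is, $\langle\mathcal{L}_{W,i}\bcal{U},\bcal{U}\rangle_{\mathcal{H}_0}=-\mathcal{B}(\bcal{U},\bcal{U})\le0$ for $\bcal{U}$ in the domain. The Gelfand-triple identification with the $M$-weighted pivot gives this pairing directly. Note that the inequality $\mathcal{B}(\bcal{U},\bcal{U})\ge0$ is obtained for smooth states via \eqref{eq:equivalence}, but it extends to all of $\widetilde{\mathrm{U}}_i$. The reason is that $\mathcal{B}(\bcal{U},\bcal{U})$ equals $\mathscr{A}_i(\bcal{S},\bcal{S})$, which for $i=1$ is coercive by \cref{Thm:Coercive_Part_1,Thm:Coercive_Part_2}. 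For $i=2$, $\mathscr{A}_2(\bcal{S},\bcal{S})$ is a sum of volume terms that are nonnegative by construction and the boundary form, which is nonnegative by \cref{Lem:coefficients_on_bdry}. Both are continuous on $\widetilde{\mathrm{U}}_i$, so the inequality passes to the limit by density.
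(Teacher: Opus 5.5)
Your proposal is correct and follows the paper's route: the paper obtains this theorem in one line from the Lumer--Phillips theorem, combining \cref{Thm:dissipative} with \cref{Lem:RC_of_nM,Lem:RC_of_Max}. Your extra verifications make explicit hypotheses the paper leaves implicit: uniqueness of the representer, density of $D(\mathcal{L}_{W,i})$ via the reflexive-space argument, and nonnegativity of $\mathcal{B}(\bcal{U},\bcal{U})=\mathscr{A}_i(\bcal{S},\bcal{S})$ on all of $\widetilde{\mathrm{U}}_i$ rather than only for smooth states. One small slip is harmless: the constraint $u_n=0$ \emph{is} still imposed in $\tilde{H}^1$ and $H_0(\mathrm{div},\Omega)$, but it does not affect $L^2$-density because $C_c^\infty$ fields satisfy it, so only the removal of the mean-zero constraint matters there.
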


The above theorem completes the proof of global well-posedness for the
homogeneous time-dependent R13 system in both the non-Maxwell and Maxwell
regimes.


\begin{remark}[Non-homogeneous boundary data]
The semigroup analysis above is formulated for homogeneous Onsager boundary data. For sufficiently regular non-homogeneous boundary data satisfying the
necessary compatibility conditions, the problem can be reduced to the homogeneous case by a standard lifting argument. More precisely, if there exists a lifting
$\bcal{U}_W(t)\in\tilde{\mathrm{U}}_i$ such that $\mathcal{G}(\bcal{U}_W(t))=\mathbf{g}_{\rm ext}(t)$, then the shifted
unknown $\widehat{\bcal{U}}=\bcal{U}-\bcal{U}_W$ satisfies an inhomogeneous abstract Cauchy problem with homogeneous boundary data.
The corresponding mild solution is then obtained
by Duhamel's formula. Since the construction of such liftings depends on the
regularity of the prescribed wall data, we 
do not elaborate on this extension here.
\end{remark}

\section{Conclusion and future work}
\label{Sec:Conclusion}
In this paper, we establish the global well-posedness of the linearized R13 equations in a weak framework. By exploiting the macroscopic-microscopic correspondence of the boundary fluxes, we first derive entropy inequalities for the R13 system on bounded Lipschitz domains. For the steady-state problem, we prove the well-posedness for non-Maxwell molecules under the stated constraints by establishing a novel boundary-related Korn-type inequality. For Maxwell molecules, we introduce a tailored functional framework, different from that in \cite{lewintan2025wellposedness}, to recover the well-posedness of the system. Finally, by combining semigroup arguments with the Lumer-Phillips theorem, we extend the analysis to the time-dependent R13 system.

The results obtained here provide a rigorous analytical foundation for the numerical analysis of the R13 system. In particular, the functional framework proposed for Maxwell molecules is better aligned with the design of mixed finite element methods and helps address the issue of mismatched convergence orders in existing formulations. Moreover, by recasting the continuous problem into a robust saddle-point framework, the construction of stable numerical schemes is naturally linked to the verification of suitable discrete inf-sup conditions.

Several directions deserve further investigation. While the present work focuses on monatomic gases, polyatomic gases are more common in practical applications. It is therefore natural to extend the present analysis to stable linearized R13 models for polyatomic gases. In addition, guided by the continuous functional settings established in this paper, a natural next step is to construct, analyze, and implement efficient and stable mixed finite element schemes for the linearized R13 system.

\appendix
\section{Thermodynamic constraints for coefficients}
\label{appA:Coefficients_for_k}
In this appendix, we explicitly detail the dependence of the coefficient sets $(k_1, k_2, k_{10})$ and $(k_3, k_4, k_7)$ on the molecular interaction parameter $\eta$, and numerically verify the validity of the strict constraints.

To facilitate the presentation, we introduce the dimensionless ratios $(w_1, w_2) := \left( \frac{3k_2^2}{k_1 k_{10}}, \frac{25k_4^2}{24k_3 k_7} \right)$ and the corresponding discriminants $(z_1, z_2) := (k_1 k_{10} - 3k_2^2, 24k_3 k_7 - 25k_4^2)$. Under this notation, the strict constraints are expressed as $w_1, w_2 < 1$ and $z_1, z_2 > 0$. \cref{Tab:strict_thermodynamic} demonstrates that the strict thermodynamic constraints are numerically satisfied within our R13 framework.
\begin{table}[htbp]
\label{Tab:strict_thermodynamic}
\begin{tabular}{|c|ccccc|}
\hline
$\eta$   & $k_1$                  & $k_2$                  & $k_{10}$               & $z_1$                   & $w_1$  \\
7        & $3.0773\times 10^{-3}$ & $1.2550\times 10^{-5}$ & $2.8590\times 10^{-7}$ & $4.0729\times 10^{-10}$ & 0.5371 \\
10       & $8.7436\times 10^{-3}$ & $4.5818\times 10^{-5}$ & $1.1896\times 10^{-6}$ & $4.1035\times 10^{-9}$  & 0.6055 \\
17       & $1.6341\times 10^{-2}$ & $1.0021\times 10^{-4}$ & $2.8475\times 10^{-6}$ & $1.5190\times 10^{-8}$  & 0.6474 \\
$\infty$ & $3.0261\times 10^{-2}$ & $2.0798\times 10^{-4}$ & $6.3621\times 10^{-6}$ & $6.2756\times 10^{-8}$  & 0.6740 \\ \hline
$\eta$   & $k_3$                  & $k_4$                  & $k_7$                  & $z_2$                   & $w_2$  \\
7        & $2.6072\times 10^{-3}$ & $4.8885\times 10^{-2}$ & $9.7119\times 10^{-1}$ & $1.0265\times 10^{-3}$  & 0.9831 \\
10       & $7.4080\times 10^{-3}$ & $8.1805\times 10^{-2}$ & $9.5624\times 10^{-1}$ & $2.7104\times 10^{-3}$  & 0.9841 \\
17       & $1.3840\times 10^{-2}$ & $1.1124\times 10^{-1}$ & $9.4576\times 10^{-1}$ & $4.7852\times 10^{-3}$  & 0.9848 \\
$\infty$ & $2.5607\times 10^{-2}$ & $1.5056\times 10^{-1}$ & $9.3584\times 10^{-1}$ & $8.4295\times 10^{-3}$  & 0.9853 \\ \hline
\end{tabular}
\caption{Numerical evaluation of the thermodynamic constraints for non-Maxwell molecules.}
\end{table}

\section*{Acknowledgments}
The authors would like to thank Huiteng Li (KAUST) for his valuable discussions regarding Korn-type inequalities and semigroup theory.

\bibliographystyle{siamplain}
\bibliography{references}
\end{document}


\maketitle
\allowdisplaybreaks[1]

\section{Weak form}
This section presents the detailed derivation of all coefficients referenced in Section 4.1. We restate the Onsager boundary conditions with numbered labels for precise reference:
\begin{align}
    &u_n = 0, \label{eq:BC_1} \\
    &\bar{s}_{n} = \tilde{\chi} \Big[ m_{11} (\theta -\theta^{W}) \label{eq:BC_2} \\
    & \qquad +m_{12} \bar{\sigma}_{nn} - \Kn \left(m_{13}\pdfFrac{\bar{s}_j}{x_j}+m_{14}\pdfFrac{\bar{s}_{\langle n}}{x_{n \rangle}}+ m_{15}\pdfFrac{u_{\langle n}}{x_{n \rangle}}\right) \Big], \notag \\ 
    &m_{26}\bar{s}_{n} + m_{27} \Kn \pdfFrac{\theta}{x_n} - m_{28} \Kn \pdfFrac{\bar{\sigma}_{nj}}{x_j} = \tilde{\chi} \Big[-m_{21} (\theta -\theta^{W}) \label{eq:BC_3} \\
    & \qquad + m_{22} \bar{\sigma}_{nn} +  m_{23} \Kn \pdfFrac{\bar{s}_j}{x_j} +  m_{24} \Kn \pdfFrac{\bar{s}_{\langle n}}{x_{n \rangle}}  + m_{25} \Kn \pdfFrac{u_{\langle n}}{x_{n \rangle}} \Big], \notag \\ 
    &\bar{\sigma}_{t_i n} = \tilde{\chi} \big[m_{31} (u_{t_i} -u^{W}_{t_i}) \label{eq:BC_4} \\
    & \qquad +m_{32}\bar{s}_{t_i} - \Kn\left(m_{33}\pdfFrac{\bar{\sigma}_{t_i j}}{x_j}  + m_{34}\pdfFrac{\bar{\sigma}_{\langle t_i n}}{x_{n \rangle}}  - m_{35}\pdfFrac{\theta}{x_{t_i}}\right) \big],   \notag \\ 
    &m_{46}\bar{\sigma}_{t_in} + m_{47}\Kn\pdfFrac{u_{\langle t_i}}{x_{n\rangle}} + m_{48}\Kn\pdfFrac{\bar{s}_{\langle t_i}}{x_{n\rangle}} = - \tilde{\chi} \big[-m_{41} (u_{t_i} -u^{W}_{t_i})  \label{eq:BC_5} \\
    & \qquad +m_{42}\bar{s}_{t_i} + m_{43} \Kn \pdfFrac{\bar{\sigma}_{t_i j}}{x_j}  + m_{44} \Kn  \pdfFrac{\bar{\sigma}_{\langle t_i n}}{x_{n \rangle}}  - m_{45} \Kn \pdfFrac{\theta}{x_{t_i}} \big], \notag \\ 
    &m_{56}\bar{\sigma}_{t_in} + m_{57}\Kn\pdfFrac{u_{\langle t_i}}{x_{n\rangle}} + m_{58}\Kn\pdfFrac{\bar{s}_{\langle t_i}}{x_{n\rangle}} = - \tilde{\chi} \big[-m_{51} (u_{t_i} -u^{W}_{t_i})  \label{eq:BC_6} \\
    & \qquad +m_{52}\bar{s}_{t_i} + m_{53} \Kn  \pdfFrac{\bar{\sigma}_{t_i j}}{x_j}  + m_{54} \Kn  \pdfFrac{\bar{\sigma}_{\langle t_i n}}{x_{n \rangle}}  - m_{55} \Kn \pdfFrac{\theta}{x_{t_i}} \big], \notag \\ 
    &m_{66} \bar{s}_n + \Kn\left(m_{67} \pdfFrac{\bar{\sigma}_{\langle nn}}{x_{n \rangle}} +m_{68}\pdfFrac{\bar{\sigma}_{nj }}{x_{j}} + m_{69}\pdfFrac{\theta}{x_n} \right)  = - \tilde{\chi} \big[-m_{61} (\theta -\theta^{W}) \label{eq:BC_7} \\
    & \qquad +m_{62} \bar{\sigma}_{nn} + m_{63} \Kn \pdfFrac{\bar{s}_j}{x_j} +  m_{64} \Kn \pdfFrac{\bar{s}_{\langle n}}{x_{n \rangle}} + m_{65} \Kn \pdfFrac{u_{\langle n}}{x_{n \rangle}}\big], \notag \\ 
    &\Kn\left(\pdfFrac{\bar{\sigma}_{\langle t_i t_i}}{x_{n \rangle}} + \frac{1}{2} \pdfFrac{\bar{\sigma}_{\langle n n}}{x_{n \rangle}} \right) =  -\tilde{\chi} m_{71} \left( \bar{\sigma}_{t_i t_i}  + \frac{1}{2} \bar{\sigma}_{nn}  \right), \label{eq:BC_8} \\
    &\Kn \pdfFrac{\bar{\sigma}_{\langle t_1 t_2}}{x_{n\rangle}}  = -\tilde{\chi} m_{81} \bar{\sigma}_{t_1 t_2}. \label{eq:BC_9}
\end{align}
Einstein summation convention is adopted for the repeated index $j$, and all conditions hold for $i \in \{1,2\}$.

We first define the proportionality coefficients via the matching of derivative terms between the governing equations and boundary conditions. These coefficients are uniquely determined by the parameters $k_i$ and $m_{jk}$:
\begin{align}
    &C_1 = \frac{\frac{5}{2}k_4}{m_{15}} = \frac{2k_6}{m_{13}} = \frac{\frac{12}{5}k_7}{m_{14}}, \label{eq:C1_def} \\
    &C_2 = \frac{m_{43}}{m_{33}} = \frac{m_{44}}{m_{34}} = \frac{m_{45}}{m_{35}}, \quad C_3 = \frac{m_{53}}{m_{33}} = \frac{m_{54}}{m_{34}} = \frac{m_{55}}{m_{35}}, \label{eq:C7_def} \\
    &C_4 = \frac{m_{23}}{m_{13}} = \frac{m_{24}}{m_{14}} = \frac{m_{25}}{m_{15}}, \label{eq:C4_def} \\
    &C_5 = \frac{k_1}{m_{27}} = \frac{k_2}{m_{28}}, \label{eq:C5_def} \\
    &C_6 = \frac{3k_2}{m_{35}} = \frac{4k_9}{m_{34}} = \frac{k_{10}}{m_{33}}, \label{eq:C6_def} \\
    & C_7 = \frac{m_{63}}{m_{13}} = \frac{m_{64}}{m_{14}} = \frac{m_{65}}{m_{15}}. \label{eq:c2_def}
\end{align}
In the following derivation, some boundary coefficients may appear through different component equations. \cref{rem:consist_of_paras} guarantees that these alternative expressions are consistent, so the weak formulation is well defined.

\subsection{Heat flux balance}
Recall that the strong form of the heat flux balance equation reads:
\begin{equation*}
    \partial_t \qbarb = -\frac{5}{2}k_0\nabla \theta + \frac{5}{2}k_4 \Kn \nabla \cdot (\nabla \bm{u})_{\stf} + 2k_6 \Kn \nabla(\nabla \cdot \qbarb) + \frac{12}{5} k_7 \Kn \nabla \cdot (\nabla \qbarb)_{\stf} - k_8 \nabla \cdot \sigmabarb - \frac{2l_1}{3\Kn}\qbarb.
\end{equation*}
Multiplying both sides by the vector test function $\frac{2}{5}\bar{\bm{r}}$, integrating over the domain $\Omega$, and applying integration by parts yields:
\begin{equation}
    \begin{array}{ll}
        &\quad\frac{2}{5}\smallip{\partial_t \qbarb}{\bar{\bm{r}}}
        = k_0 \smallip{\theta}{\nabla\cdot\bar{\bm{r}}} - k_0 \smallip{\theta}{\bar{r}_n}_b \\
        &- k_4 \Kn \smallip{(\nabla \bm{u})_{\stf}}{(\nabla \bar{\bm{r}})_{\stf}} + k_4 \Kn  \smallip{\pdfFrac{u_{\langle n}}{x_{n\rangle}}}{\bar{r}_{n}}_b + k_4 \Kn \sum_{i=1}^2 \smallip{\pdfFrac{u_{\langle t_i}}{x_{n\rangle}}}{\bar{r}_{t_i}}_b \\
        & - \frac{4}{5}k_6 \Kn \smallip{\nabla\cdot\qbarb}{\nabla\cdot\bar{\bm{r}}} + \frac{4}{5}k_6 \Kn \smallip{\nabla\cdot\qbarb}{\bar{r}_n}_b \\
        & - \frac{24}{25}k_7 \Kn \smallip{(\nabla \qbarb)_{\stf}}{(\nabla \bar{\bm{r}})_{\stf}} + \frac{24}{25}k_7 \Kn \smallip{\pdfFrac{\bar{s}_{\langle n}}{x_{n\rangle}}}{\bar{r}_{n}}_b + \frac{24}{25}k_7 \Kn \sum_{i=1}^2 \smallip{\pdfFrac{\bar{s}_{\langle t_i}}{x_{n\rangle}}}{\bar{r}_{t_i}}_b \\
        &- \frac{2}{5}k_8 \smallip{\nabla \cdot \sigmabarb}{\bar{\bm{r}}} - \frac{4l_1}{15\Kn} \smallip{\qbarb}{\bar{\bm{r}}}.
    \end{array}
    \label{eq:heat_weak_scaled}
\end{equation}

We now substitute the boundary conditions to eliminate the normal derivatives appearing in the boundary integral terms.

\subsubsection{Normal component boundary terms}
Using \eqref{eq:C1_def}, we rewrite the normal derivative terms as:
\begin{equation}
    \frac{4}{5}k_6 \Kn \nabla\cdot\qbarb + \frac{24}{25}k_7 \Kn \pdfFrac{\bar{s}_{\langle n}}{x_{n \rangle}} + k_4 \Kn \pdfFrac{u_{\langle n}}{x_{n \rangle}} = \frac{2}{5} C_1 \Kn \left( m_{13}\nabla\cdot\qbarb + m_{14}\pdfFrac{\bar{s}_{\langle n}}{x_{n\rangle}} + m_{15}\pdfFrac{u_{\langle n}}{x_{n \rangle}} \right).
\end{equation}
Rearranging the derivative terms in \eqref{eq:BC_2} gives:
\begin{equation}\label{eq:m1eq_deri_left}
    \Kn \left( m_{13}\nabla\cdot\qbarb + m_{14}\pdfFrac{\bar{s}_{\langle n}}{x_{n \rangle}} + m_{15}\pdfFrac{u_{\langle n}}{x_{n \rangle}} \right) = m_{11}(\theta-\theta^W) + m_{12}\bar{\sigma}_{nn} - \frac{1}{\tilde{\chi}} \bar{s}_n.
\end{equation}
Substituting this result back into the normal boundary integrals of \eqref{eq:heat_weak_scaled} and collecting the coefficients for $\smallip{\bar{s}_n}{\bar{r}_n}_b$, $\smallip{\theta}{\bar{r}_n}_b$, and $\smallip{\bar{\sigma}_{nn}}{\bar{r}_n}_b$, we obtain:
\begin{displaymath}
    S_5 = \frac{2 C_1}{5 \tilde{\chi}}, \qquad
    R_1 = \frac{2}{5}C_1 m_{11} - k_0, \qquad
    R_3 = \frac{2}{5} C_1 m_{12} .
\end{displaymath}
The wall temperature term contributes $-\frac{2}{5}C_1 m_{11} \smallip{\theta^W}{\bar{r}_n}_b$ to the linear functional $L_1(\bar{\bm{r}})$.

\subsubsection{Tangential component boundary terms}
We isolate from \eqref{eq:heat_weak_scaled} all boundary terms involving the tangential components $\bar{r}_{t_i}$ of the test function:
\begin{equation}
    \mathcal{B}_{\text{heat}}^t \coloneqq \sum_{i=1}^2 \left( k_4 \Kn \smallip{\pdfFrac{u_{\langle t_i}}{x_{n\rangle}}}{\bar{r}_{t_i}}_b + \frac{24}{25}k_7 \Kn \smallip{\pdfFrac{\bar{s}_{\langle t_i}}{x_{n\rangle}}}{\bar{r}_{t_i}}_b \right).
    \label{eq:heat_tangent_boundary_raw}
\end{equation}
We will use boundary conditions \eqref{eq:BC_4}, \eqref{eq:BC_5}, and \eqref{eq:BC_6} to eliminate the normal derivatives in \eqref{eq:heat_tangent_boundary_raw}.

First, we rearrange these boundary conditions as follows:
\begin{align}
    &m_{33} \Kn \pdfFrac{\bar{\sigma}_{t_i j}}{x_j} + m_{34} \Kn \pdfFrac{\bar{\sigma}_{\langle t_i n}}{x_{n\rangle}} - m_{35} \Kn \pdfFrac{\theta}{x_{t_i}} = m_{31}(u_{t_i} - u^W_{t_i}) + m_{32} \bar{s}_{t_i} - \frac{1}{\tilde{\chi}} \bar{\sigma}_{nt_i}, \label{eq:BC_T1} \\
    &m_{43} \Kn \pdfFrac{\bar{\sigma}_{t_i j}}{x_j} + m_{44} \Kn \pdfFrac{\bar{\sigma}_{\langle t_i n}}{x_{n\rangle}} - m_{45} \Kn \pdfFrac{\theta}{x_{t_i}} + \frac{1}{\tilde{\chi}} m_{47} \Kn \pdfFrac{u_{\langle t_i}}{x_{n\rangle}} + \frac{1}{\tilde{\chi}} m_{48} \Kn \pdfFrac{\bar{s}_{\langle t_i}}{x_{n\rangle}} \label{eq:BC_T2} \\
    &\quad = m_{41}(u_{t_i} - u^W_{t_i}) - m_{42} \bar{s}_{t_i} - \frac{1}{\tilde{\chi}} m_{46} \bar{\sigma}_{nt_i}, \nonumber \\
    &m_{53} \Kn \pdfFrac{\bar{\sigma}_{t_i j}}{x_j} + m_{54} \Kn \pdfFrac{\bar{\sigma}_{\langle t_i n}}{x_{n\rangle}} - m_{55} \Kn \pdfFrac{\theta}{x_{t_i}} + \frac{1}{\tilde{\chi}} m_{57} \Kn \pdfFrac{u_{\langle t_i}}{x_{n\rangle}} + \frac{1}{\tilde{\chi}} m_{58} \Kn \pdfFrac{\bar{s}_{\langle t_i}}{x_{n\rangle}} \label{eq:BC_T3} \\
    &\quad = m_{51}(u_{t_i} - u^W_{t_i}) - m_{52} \bar{s}_{t_i} - \frac{1}{\tilde{\chi}} m_{56} \bar{\sigma}_{nt_i}. \nonumber
\end{align}
Using the proportionality coefficients in \eqref{eq:C7_def}, we eliminate the redundant derivative terms (the first three terms on the left-hand side) via linear combinations:
\begin{align}
    (\eqref{eq:BC_T2} - C_2 \cdot \eqref{eq:BC_T1})\cdot \tilde{\chi}&= m_{47} \Kn \pdfFrac{u_{\langle t_i}}{x_{n\rangle}} + m_{48} \Kn \pdfFrac{\bar{s}_{\langle t_i}}{x_{n\rangle}} \label{eq:BC_T4} \\
    &= A_1 (u_{t_i} - u^W_{t_i}) + A_2 \bar{s}_{t_i} + A_3 \bar{\sigma}_{nt_i},  \notag \\
    (\eqref{eq:BC_T3} - C_3 \cdot \eqref{eq:BC_T1})\cdot \tilde{\chi}&= m_{57} \Kn \pdfFrac{u_{\langle t_i}}{x_{n\rangle}} + m_{58} \Kn \pdfFrac{\bar{s}_{\langle t_i}}{x_{n\rangle}}\label{eq:BC_T5}\\
    &= B_1 (u_{t_i} - u^W_{t_i}) + B_2 \bar{s}_{t_i} + B_3 \bar{\sigma}_{nt_i},  \notag
\end{align}
where the intermediate coefficients are defined as:
\begin{equation}\label{eq:A13B13}
    \begin{split}
    &A_1 =\tilde{\chi} ( m_{41} - C_2 m_{31} ), \quad A_2 = \tilde{\chi} (-m_{42} - C_2 m_{32} ), \quad A_3 = -m_{46} + C_2, \\
    &B_1 =\tilde{\chi} ( m_{51} - C_3 m_{31} ), \quad B_2 = \tilde{\chi} (-m_{52} - C_3 m_{32} ), \quad B_3 = -m_{56} + C_3.
    \end{split}
\end{equation}

We then introduce linear combination coefficients to match the derivative coefficients $k_4$ and $\frac{24}{25}k_7$ in \eqref{eq:heat_tangent_boundary_raw}:
\begin{equation*}
    \begin{pmatrix} \alpha_1 \\ \beta_1 \end{pmatrix} = \begin{pmatrix} m_{47} & m_{57} \\ m_{48} & m_{58} \end{pmatrix}^{-1} \begin{pmatrix} k_4 \\ \frac{24}{25}k_7 \end{pmatrix},
\end{equation*}
which satisfy the matching conditions:
\begin{equation*}
    \alpha_1 m_{47} + \beta_1 m_{57} = k_4, \quad \alpha_1 m_{48} + \beta_1 m_{58} = \frac{24}{25}k_7.
\end{equation*}
Therefore, taking the linear combination $\alpha_1 \cdot \eqref{eq:BC_T4} +  \beta_1 \cdot \eqref{eq:BC_T5}$ gives:
\begin{equation*}
    k_4 \Kn \pdfFrac{u_{\langle t_i}}{x_{n\rangle}} + \frac{24}{25}k_7 \Kn \pdfFrac{\bar{s}_{\langle t_i}}{x_{n\rangle}} = Z_1 (u_{t_i} - u^W_{t_i}) + Z_2 \bar{s}_{t_i} + Z_3 \bar{\sigma}_{nt_i},
\end{equation*}
where
\begin{align*}
    Z_1 = \alpha_1 A_1 + \beta_1 B_1, \quad Z_2 = \alpha_1 A_2 + \beta_1 B_2, \quad Z_3 = \alpha_1 A_3 + \beta_1 B_3.
\end{align*}

Substituting this result back into the tangential boundary integral \eqref{eq:heat_tangent_boundary_raw} gives:
\begin{equation}
    \mathcal{B}_{\text{heat}}^t = \sum_{i=1}^2 \smallip{Z_1 (u_{t_i} - u^W_{t_i}) + Z_2 \bar{s}_{t_i} + Z_3 \bar{\sigma}_{nt_i} }{\bar{r}_{t_i}}_b ,
\end{equation}
which yields the explicit tangential boundary coefficients:
\begin{align*}
    &S_1 = -Z_2 = -\left( \alpha_1 A_2 + \beta_1 B_2 \right), \qquad T_1 = -Z_1 = -\left( \alpha_1 A_1 + \beta_1 B_1 \right), \\
    &R_2 = Z_3 = \left( \alpha_1 A_3 + \beta_1 B_3 \right).
\end{align*}
The wall velocity term contributes $T_1 \smallip{u_{t_i}^W}{\bar{r}_{t_i}}_b$ to the linear functional $L_1(\bar{\bm{r}})$.

\subsection{Energy balance}
The strong form of the energy balance equation reads:
\begin{equation*}
    \partial_t \theta = -\frac{2}{3}\nabla \cdot \bm{u} - \frac{2}{3}k_0\nabla \cdot \qbarb + k_1 \Kn \Delta \theta - k_2 \Kn \nabla \cdot (\nabla \cdot \sigmabarb).
\end{equation*}
Multiplying both sides by the scalar test function $\frac{3}{2}\gamma$, integrating over $\Omega$, and applying integration by parts gives:
\begin{equation}
    \begin{array}{rl}
        \frac{3}{2} \smallip{\partial_t \theta}{\gamma}
        &=-\smallip{\nabla \cdot \bm{u}}{\gamma} - k_0 \smallip{\nabla \cdot \qbarb}{\gamma} \\
        &\quad - \frac{3}{2}k_1 \Kn \smallip{\nabla\theta}{\nabla\gamma} + \frac{3}{2}k_1 \Kn \smallip{\pdfFrac{\theta}{x_n}}{\gamma}_b \\
        &\quad + \frac{3}{2}k_2 \Kn \smallip{\nabla\cdot\sigmabarb}{\nabla\gamma} - \frac{3}{2}k_2 \Kn \smallip{\pdfFrac{\bar{\sigma}_{nj}}{x_j}}{\gamma}_b,
    \end{array}
    \label{eq:energy_weak_scaled}
\end{equation}
Using \eqref{eq:C5_def}, we rewrite the derivative terms in boundary integrals as:
\begin{equation*}
    \frac{3}{2}k_1 \Kn \pdfFrac{\theta}{x_n} - \frac{3}{2}k_2 \Kn \pdfFrac{\bar{\sigma}_{nj}}{x_j} = \frac{3}{2}C_5 \left( m_{27} \Kn \pdfFrac{\theta}{x_n} - m_{28} \Kn \pdfFrac{\bar{\sigma}_{nj}}{x_j} \right).
\end{equation*}
From boundary condition \eqref{eq:BC_3}, we rearrange terms to get:
\begin{equation*}
    \begin{aligned}
	&m_{27}\Kn\pdfFrac{\theta}{x_n} - m_{28}\Kn\pdfFrac{\bar{\sigma}_{nj}}{x_j}\\
	= &\tilde{\chi}\Big[-m_{21}(\theta-\theta^W)+m_{22}\bar{\sigma}_{nn}
	+m_{23}\Kn\pdfFrac{\bar{s}_j}{x_j}+m_{24}\Kn\pdfFrac{\bar{s}_{\langle n}}{x_{n\rangle}}+m_{25}\Kn\pdfFrac{u_{\langle n}}{x_{n\rangle}}\Big]-m_{26}\bar{s}_n.
    \end{aligned}
\end{equation*}
Substituting the proportionality relation \eqref{eq:C4_def} into the above expression yields:
\begin{equation*}
    \begin{aligned}
    m_{27}\Kn\pdfFrac{\theta}{x_n} - m_{28}\Kn\pdfFrac{\bar{\sigma}_{nj}}{x_j}
    &= \tilde{\chi}\Bigg[-m_{21}(\theta-\theta^W)+m_{22}\bar{\sigma}_{nn} \\
    &\quad + C_4 \Kn \left( m_{13}\pdfFrac{\bar{s}_j}{x_j} + m_{14}\pdfFrac{\bar{s}_{\langle n}}{x_{n\rangle}} + m_{15}\pdfFrac{u_{\langle n}}{x_{n\rangle}} \right)\Bigg] - m_{26}\bar{s}_n.
    \end{aligned}
\end{equation*}
We substitute the rearranged boundary condition \eqref{eq:m1eq_deri_left} into the equation above, leading to:
\begin{equation}\label{eq:m27_rearranged}
    \begin{aligned}
        &m_{27}\Kn\pdfFrac{\theta}{x_n} - m_{28}\Kn\pdfFrac{\bar{\sigma}_{nj}}{x_j}\\
        =& \tilde{\chi}\Big[ (-m_{21}+C_4 m_{11})(\theta-\theta^W) + (m_{22}+C_4 m_{12})\bar{\sigma}_{nn} \Big] - (C_4 + m_{26})\bar{s}_n.
    \end{aligned}
\end{equation}

Substituting this result back into the boundary terms of \eqref{eq:energy_weak_scaled} and collecting coefficients gives:
\begin{equation*}
    R_1 = \frac{3}{2}C_5(C_4 + m_{26}), \qquad
    T_2 = -\frac{3}{2}C_5\tilde{\chi}(m_{22}+C_4 m_{12}), \qquad
    S_4 = \frac{3}{2}C_5\tilde{\chi}(m_{21} - C_4 m_{11}).
\end{equation*}
The wall temperature term contributes $S_4 \smallip{\theta^W}{\gamma}_b$ to the linear functional $L_2(\gamma)$.

\subsection{Stress balance}
Recall that the strong form of the stress balance equation reads:
\begin{equation*}
    \begin{aligned}
    \partial_t \bar{\boldsymbol{\sigma}} &= -3k_2 \mathrm{Kn} (\nabla^2 \theta)_{\mathrm{stf}} - 2k_5 (\nabla \boldsymbol{u})_{\mathrm{stf}} - \frac{4}{5} k_8 (\nabla \bar{\boldsymbol{s}})_{\mathrm{stf}} \\
    &+ 2k_9 \mathrm{Kn} \nabla \cdot (\nabla \bar{\boldsymbol{\sigma}})_{\mathrm{stf}} + k_{10} \mathrm{Kn} \big(\nabla (\nabla \cdot \bar{\boldsymbol{\sigma}})\big)_{\mathrm{stf}} - \frac{l_2}{\mathrm{Kn}}\bar{\boldsymbol{\sigma}}.
    \end{aligned}
\end{equation*}
Multiplying both sides by the symmetric trace-free test tensor $\frac{1}{2}\bar{\bm{\tau}}$, integrating over the domain $\Omega$, and applying integration by parts yields:
\begin{equation}
    \begin{array}{ll}
        &\quad\frac{1}{2}\smallip{\partial_t \bar{\boldsymbol{\sigma}}}{\bar{\bm{\tau}}}
        = \frac{3}{2}k_2 \mathrm{Kn} \smallip{\nabla\theta}{\nabla\cdot\bar{\bm{\tau}}} - \frac{3}{2}k_2 \mathrm{Kn} \smallip{\pdfFrac{\theta}{x_n}}{\bar{\tau}_{nn}}_b - \frac{3}{2}k_2 \mathrm{Kn} \sum_{i=1}^2 \smallip{\pdfFrac{\theta}{x_{t_i}}}{\bar{\tau}_{nt_i}}_b \\
        &+ k_5 \smallip{\boldsymbol{u}}{\nabla\cdot\bar{\bm{\tau}}} - k_5 \sum_{i=1}^2 \smallip{u_{t_i}}{\bar{\tau}_{nt_i}}_b - k_5 \smallip{u_n}{\bar{\tau}_{nn}}_b \\
        &+ \frac{2}{5}k_8 \smallip{\bar{\boldsymbol{s}}}{\nabla\cdot\bar{\bm{\tau}}} - \frac{2}{5}k_8 \sum_{i=1}^2 \smallip{\bar{s}_{t_i}}{\bar{\tau}_{nt_i}}_b - \frac{2}{5}k_8 \smallip{\bar{s}_n}{\bar{\tau}_{nn}}_b \\
        &- k_9 \mathrm{Kn} \smallip{(\nabla \bar{\boldsymbol{\sigma}})_{\mathrm{stf}}}{(\nabla \bar{\bm{\tau}})_{\mathrm{stf}}} + \frac{3}{2}k_9 \mathrm{Kn} \smallip{\pdfFrac{\bar{\sigma}_{\langle nn}}{x_{n \rangle}}}{\bar{\tau}_{nn}}_b+2k_9\Kn \sum_{i=1}^2 \smallip{\pdfFrac{\bar{\sigma}_{\langle nt_i}}{x_{n\rangle}}}{\bar{\tau}_{nt_i}}_b \\
        &+ k_9 \sum_{i=1}^2 \mathrm{Kn} \smallip{\pdfFrac{\bar{\sigma}_{\langle t_i t_i}}{x_{n\rangle}} + \frac{1}{2}\pdfFrac{\bar{\sigma}_{\langle nn}}{x_{n\rangle}}}{\bar{\tau}_{t_i t_i} + \frac{1}{2}\bar{\tau}_{nn}}_b + 2k_9 \mathrm{Kn} \smallip{\pdfFrac{\bar{\sigma}_{\langle t_1 t_2}}{x_{n\rangle}}}{\bar{\tau}_{t_1 t_2}}_b \\
        &- \frac{1}{2}k_{10} \mathrm{Kn} \smallip{\nabla\cdot\bar{\boldsymbol{\sigma}}}{\nabla\cdot\bar{\bm{\tau}}} + \frac{1}{2}k_{10} \mathrm{Kn} \smallip{\pdfFrac{\bar{\sigma}_{nj}}{x_j}}{\bar{\tau}_{nn}}_b + \frac{1}{2}k_{10} \mathrm{Kn} \sum_{i=1}^2 \smallip{\pdfFrac{\bar{\sigma}_{t_i j}}{x_j}}{\bar{\tau}_{nt_i}}_b \\
        &- \frac{l_2}{2\mathrm{Kn}} \smallip{\bar{\boldsymbol{\sigma}}}{\bar{\bm{\tau}}}.
    \end{array}
    \label{eq:stress_weak_scaled}
\end{equation}
The boundary condition $u_n=0$ in \eqref{eq:BC_1} eliminates the corresponding boundary term. We substitute the boundary conditions \eqref{eq:BC_8}--\eqref{eq:BC_9} for the tangential-tangential components:
\begin{align*}
    &k_9 \mathrm{Kn} \smallip{\pdfFrac{\bar{\sigma}_{\langle t_i t_i}}{x_{n\rangle}} + \frac{1}{2}\pdfFrac{\bar{\sigma}_{\langle nn}}{x_{n\rangle}}}{\bar{\tau}_{t_i t_i} + \frac{1}{2}\bar{\tau}_{nn}}_b = -k_9 \tilde{\chi} m_{71} \smallip{\bar{\sigma}_{t_i t_i} + \frac{1}{2}\bar{\sigma}_{nn}}{\bar{\tau}_{t_i t_i} + \frac{1}{2}\bar{\tau}_{nn}}_b, \\
    &2k_9 \mathrm{Kn} \smallip{\pdfFrac{\bar{\sigma}_{\langle t_1 t_2}}{x_{n\rangle}}}{\bar{\tau}_{t_1 t_2}}_b = -2k_9 \tilde{\chi} m_{81} \smallip{\bar{\sigma}_{t_1 t_2}}{\bar{\tau}_{t_1 t_2}}_b.
\end{align*}
This defines the boundary coefficients:
\begin{displaymath}
    S_6 = k_9 \tilde{\chi} m_{71}, \quad S_8 = 2k_9 \tilde{\chi} m_{81}.
\end{displaymath}

\subsubsection{Normal-tangential boundary terms}
We collect all boundary terms involving $\bar{\tau}_{nt_i}$ from \eqref{eq:stress_weak_scaled}:
\begin{equation*}
    \mathcal{B}_{\text{stress}}^{nt} = \sum_{i=1}^2 \smallip{ - \frac{3}{2}k_2 \mathrm{Kn} \pdfFrac{\theta}{x_{t_i}} + 2k_9 \mathrm{Kn} \pdfFrac{\bar{\sigma}_{\langle t_i n}}{x_{n\rangle}} + \frac{1}{2}k_{10} \mathrm{Kn} \pdfFrac{\bar{\sigma}_{t_i j}}{x_j} - k_5 u_{t_i} -\frac{2}{5}k_8 \bar{s}_{t_i} }{ \bar{\tau}_{nt_i} }_b.
\end{equation*}
Using the proportionality coefficient \eqref{eq:C6_def} and substituting the boundary condition \eqref{eq:BC_4}, we obtain:
\begin{equation*}
    - \frac{3}{2}k_2 \mathrm{Kn} \pdfFrac{\theta}{x_{t_i}} + 2k_9 \mathrm{Kn} \pdfFrac{\bar{\sigma}_{\langle t_i n}}{x_{n \rangle}} + \frac{1}{2}k_{10} \mathrm{Kn} \pdfFrac{\bar{\sigma}_{t_i j}}{x_j} = \frac{1}{2}C_6 \left( m_{31}(u_{t_i}-u_{t_i}^W) + m_{32}\bar{s}_{t_i} - \frac{1}{\tilde{\chi}}\bar{\sigma}_{nt_i} \right).
\end{equation*}
Substituting back yields the coefficients: 
\begin{align*}
    S_7 = \frac{C_6}{2\tilde{\chi}}, 
    \quad R_4 = \frac{C_6 m_{31}}{2} - k_5, \quad R_2 = -\frac{C_6 m_{32}}{2} + \frac{2}{5}k_8.
\end{align*}

\subsubsection{Normal-normal boundary terms}
We collect boundary terms involving $\bar{\tau}_{nn}$:
\begin{equation*}
    \mathcal{B}_{\text{stress}}^{nn} = \smallip{- \frac{3}{2}k_2 \mathrm{Kn} \pdfFrac{\theta}{x_n} + \frac{3}{2}k_9 \mathrm{Kn} \pdfFrac{\bar{\sigma}_{\langle nn}}{x_{n\rangle}} + \frac{1}{2}k_{10} \mathrm{Kn} \pdfFrac{\bar{\sigma}_{nj}}{x_j} - \frac{2}{5}k_8 \bar{s}_n}{\bar{\tau}_{nn}}_b.
\end{equation*}
First, we rearrange \eqref{eq:BC_7} to get:
\begin{equation*}
\begin{split}
	& -\Kn \left( m_{67}\pdfFrac{\bar{\sigma}_{\langle nn}}{x_{n\rangle}} + m_{68}\pdfFrac{\bar{\sigma}_{nj}}{x_j} + m_{69}\pdfFrac{\theta}{x_n} \right)
	= m_{66} \bar{s}_n + \tilde{\chi} \big[-m_{61} (\theta -\theta^{W}) \\
    & \qquad +m_{62} \bar{\sigma}_{nn} + m_{63} \Kn \pdfFrac{\bar{s}_j}{x_j} +  m_{64} \Kn \pdfFrac{\bar{s}_{\langle n}}{x_{n \rangle}} + m_{65} \Kn \pdfFrac{u_{\langle n}}{x_{n \rangle}}\big]
\end{split}
\end{equation*}
Substituting the proportionality relation \eqref{eq:c2_def} and the rearranged boundary condition \eqref{eq:m1eq_deri_left} into the above expression yields:
\begin{equation}\label{eq:m67_rearranged}
    \begin{aligned}
    & -\Kn \left( m_{67}\pdfFrac{\bar{\sigma}_{\langle nn}}{x_{n\rangle}} + m_{68}\pdfFrac{\bar{\sigma}_{nj}}{x_j} + m_{69}\pdfFrac{\theta}{x_n} \right)
	= m_{66} \bar{s}_n + \tilde{\chi} \big[-m_{61} (\theta -\theta^{W}) \\
    & \qquad +m_{62} \bar{\sigma}_{nn} + C_7 \left( m_{11}(\theta-\theta^W) + m_{12}\bar{\sigma}_{nn} - \frac{1}{\tilde{\chi}} \bar{s}_n \right)\Bigg].
    \end{aligned}
\end{equation}
Next, we introduce the linear combination coefficients $\alpha_2, \beta_2$ satisfying the vector matching condition:
\begin{equation}
	\frac{1}{2}\begin{pmatrix}
		3k_2 \\ -3k_9 \\ -k_{10}
	\end{pmatrix}
	=
	\alpha_2
	\begin{pmatrix}
		m_{27} \\ 0 \\ -m_{28}
	\end{pmatrix}
	+
	\beta_2
	\begin{pmatrix}
		-m_{69} \\ -m_{67} \\ -m_{68}
	\end{pmatrix},
	\label{eq:nn_coeff_match}
\end{equation}
which allows us to rewrite the derivative combination in $\mathcal{B}_{\text{stress}}^{nn}$ as:
\begin{equation*}
	\begin{aligned}
		&- \frac{3}{2}k_2 \mathrm{Kn} \pdfFrac{\theta}{x_n} + \frac{3}{2}k_9 \mathrm{Kn} \pdfFrac{\bar{\sigma}_{\langle nn}}{x_{n\rangle}} + \frac{1}{2}k_{10} \mathrm{Kn} \pdfFrac{\bar{\sigma}_{nj}}{x_j} \\
		=\ &-\alpha_2 \left( m_{27}\mathrm{Kn}\pdfFrac{\theta}{x_n} - m_{28}\mathrm{Kn}\pdfFrac{\bar{\sigma}_{nj}}{x_j} \right)
		+\beta_2 \mathrm{Kn}\left( m_{67}\pdfFrac{\bar{\sigma}_{\langle nn}}{x_{n\rangle}} + m_{68}\pdfFrac{\bar{\sigma}_{nj}}{x_j} + m_{69}\pdfFrac{\theta}{x_n} \right).
	\end{aligned}
\end{equation*}
Substituting \eqref{eq:m27_rearranged} and \eqref{eq:m67_rearranged} into the above expression yields:
\begin{displaymath}
    - \frac{3}{2}k_2 \mathrm{Kn} \pdfFrac{\theta}{x_n} + \frac{3}{2}k_9 \mathrm{Kn} \pdfFrac{\bar{\sigma}_{\langle nn}}{x_{n\rangle}} + \frac{1}{2}k_{10} \mathrm{Kn} \pdfFrac{\bar{\sigma}_{nj}}{x_j} = - \left( Y_1(\theta-\theta^W) + Y_2 \bar{\sigma}_{nn} + Y_3 \bar{s}_n \right),
\end{displaymath}
where coefficients
\begin{gather*}
    Y_1 =\alpha_2\tilde{\chi}\left( C_4 m_{11} - m_{21} \right) + \beta_2\tilde{\chi}\left( C_7 m_{11}-m_{61} \right), \\
    Y_2 = \alpha_2\tilde{\chi}\left( C_4 m_{12} + m_{22} \right) + \beta_2\tilde{\chi}\left( C_7 m_{12}+m_{62} \right), \\ 
    Y_3 = -\alpha_2\left( m_{26}+C_4 \right) + \beta_2\left( m_{66}-C_7 \right).
\end{gather*}
Substituting back into $\mathcal{B}_{\text{stress}}^{nn}$ and combining the $\bar{s}_n$ term gives normal-normal boundary coefficients
\begin{displaymath}
    R_3 = Y_3 + \frac{2}{5}k_8, \quad \tilde{T}_2 = Y_1, \quad S_3 = Y_2.
\end{displaymath}
The wall temperature term contributes $\tilde{T}_2 \smallip{\theta^W}{\bar{\tau}_{nn}}_b$ to $L_3(\bar{\bm{\tau}})$.
\subsection{Momentum balance}
Recall that the strong form of the momentum balance equation reads:
\begin{equation*}
    \partial_t \boldsymbol{u} = -\nabla \rho - \nabla \theta + k_3 \mathrm{Kn} \nabla \cdot (\nabla \boldsymbol{u})_{\mathrm{stf}} + k_4 \mathrm{Kn} \nabla \cdot (\nabla \bar{\boldsymbol{s}})_{\mathrm{stf}} - k_5 \nabla \cdot \bar{\boldsymbol{\sigma}}.
\end{equation*}
Multiplying by the vector test function $\boldsymbol{v}$, integrating over $\Omega$, and applying integration by parts gives:
\begin{equation}
    \begin{aligned}
        \smallip{\partial_t \boldsymbol{u}}{\boldsymbol{v}}
        &= \smallip{\rho}{\nabla\cdot\boldsymbol{v}} + \smallip{\theta}{\nabla\cdot\boldsymbol{v}}
        - k_3 \mathrm{Kn} \smallip{(\nabla \boldsymbol{u})_{\mathrm{stf}}}{(\nabla \boldsymbol{v})_{\mathrm{stf}}} + k_3 \mathrm{Kn} \smallip{\pdfFrac{u_{\langle t_i}}{x_{n \rangle}}}{v_{t_i}}_b \\
        &- k_4 \mathrm{Kn} \smallip{(\nabla \bar{\boldsymbol{s}})_{\mathrm{stf}}}{(\nabla \boldsymbol{v})_{\mathrm{stf}}} + k_4 \mathrm{Kn} \smallip{\pdfFrac{\bar{s}_{\langle t_i}}{x_{n\rangle} }}{v_{t_i}}_b
        + k_5 \smallip{\nabla\cdot\bar{\boldsymbol{\sigma}}}{\boldsymbol{v}}.
    \end{aligned}
    \label{eq:momentum_weak_scaled}
\end{equation}
The essential boundary condition $u_n=0$ implies $v_n=0$ on $\partial\Omega$, so only tangential boundary terms remain.

We collect the tangential boundary terms:
\begin{equation}
    \mathcal{B}_{\text{momentum}}^t = \smallip{ k_3 \mathrm{Kn} \pdfFrac{u_{\langle t_i}}{x_{n \rangle}} + k_4 \mathrm{Kn} \pdfFrac{\bar{s}_{\langle t_i}}{x_{n \rangle}} - k_5 \bar{\sigma}_{nt_i} }{ v_{t_i}}_b.
\end{equation}
Using the linear combinations of boundary conditions \eqref{eq:BC_T4}--\eqref{eq:BC_T5} and the matching coefficients
\begin{equation*}
    \begin{pmatrix} \alpha_3 \\ \beta_3 \end{pmatrix} = \begin{pmatrix} m_{47} & m_{57} \\ m_{48} & m_{58} \end{pmatrix}^{-1} \begin{pmatrix} k_3 \\ k_4 \end{pmatrix},
\end{equation*}
we substitute the derivative terms and obtain:
\begin{equation*}
    k_3 \mathrm{Kn} \pdfFrac{u_{\langle t_i}}{x_{n \rangle}} + k_4 \mathrm{Kn} \pdfFrac{\bar{s}_{\langle t_i}}{x_{n \rangle}} = (\alpha_3 A_1 + \beta_3 B_1) (u_{t_i} - u^W_{t_i}) + (\alpha_3 A_2 + \beta_3 B_2) \bar{s}_{t_i} + (\alpha_3 A_3 + \beta_3 B_3) \bar{\sigma}_{nt_i},
\end{equation*}
where coefficients $A_i$ and $B_i$ come from \eqref{eq:A13B13}. Therefore, it holds that
\begin{align*}
    S_2 = -(\alpha_3 A_1 + \beta_3 B_1), \quad \tilde{T}_1 = -(\alpha_3 A_2 + \beta_3 B_2), \quad R_4 = -(\alpha_3 A_3 + \beta_3 B_3).
\end{align*}
The wall velocity term contributes $S_2 \smallip{u_{t_i}^W}{v_{t_i}}_b$ to the linear functional $L_4(\boldsymbol{v})$.

\subsection{Mass balance}
The strong form of the mass conservation equation reads $\partial_t \rho = -\nabla \cdot \boldsymbol{u}$. 
Multiplying by the scalar test function $q$, integrating over $\Omega$ yields:
\begin{equation*}
    \smallip{\partial_t \rho}{q} = -\smallip{\nabla \cdot \boldsymbol{u}}{q}.
\end{equation*}

\subsection{Result}
\begin{remark}[Consistency of the boundary coupling coefficients]
\label{rem:consist_of_paras}
Let
$Q_{\mathrm{bdry}}(\bcal{S}):=\mathscr{A}_{\mathrm{bdry}}(\bcal{S},\bcal{S})$
denote the boundary contribution in the diagonal evaluation of the weak formulation.
By the kinetic representation,
\[
Q_{\mathrm{bdry}}(\bcal{S})
=
-2\bigl(\widetilde Q A_n f_{\mathrm{even}},\,A_n f_{\mathrm{even}}\bigr)_b .
\]
Hence \(Q_{\mathrm{bdry}}(\bcal{S})\) is generated solely by the even part of the boundary
trace of the distribution function.

With respect to the normal reflection \(\xi_n\mapsto -\xi_n\), the trace moments$
\{\theta,\sigma_{nn},s_{t_i},u_{t_i}\}$
are even, whereas $\{s_n,\sigma_{nt_i}\}$ are odd. On the other hand, the coefficients $\{R_i\}$ appear in the bilinear boundary
couplings
\[
(r_n,\sigma_{nn})_b,\qquad
(r_{t_i},\sigma_{nt_i})_b,\qquad
(\theta,r_n)_b,\qquad
(\sigma_{nt_i},v_{t_i})_b .
\]
After diagonal evaluation \(\bcal{R}=\bcal{S}\), these become
\[
(s_n,\sigma_{nn})_b,\qquad
(s_{t_i},\sigma_{nt_i})_b,\qquad
(\theta,s_n)_b,\qquad
(\sigma_{nt_i},u_{t_i})_b .
\]
Each of these terms couples one odd trace moment with one even trace moment.

Therefore such odd-even boundary cross terms cannot occur in
\(Q_{\mathrm{bdry}}(S)\). Consequently, the formulas for the coefficients \(R_i\)
obtained from different component equations cannot be independent: they must
coincide exactly. Otherwise, after setting the test and trial variables equal, a
nonzero residual odd-even boundary cross term would remain in
\(Q_{\mathrm{bdry}}(\bcal{S})\), contradicting the kinetic representation above.

From the detailed expression for boundary conditions in \cite{lin2025time}, we can varify $T_1=\tilde{T}_1$ and $T_2=\tilde{T}_2$. So the boundary coupling coefficients are consistent.


\end{remark}
Mark the coefficients:
\begin{equation}
\begin{aligned}
    &\begin{bmatrix}
        k_4\\
        \frac{24}{25}k_7
    \end{bmatrix}
    =
    \begin{bmatrix}
        m_{47}&m_{57}\\
        m_{48}&m_{58}\\
    \end{bmatrix}
    \begin{bmatrix}
        \alpha_1\\
        \beta_1\\
    \end{bmatrix},\quad \frac{1}{2}\begin{bmatrix}
        3k_2\\
        -3k_9\\
        -k_{10}
    \end{bmatrix}=\begin{bmatrix}
        m_{27}& -m_{69}\\
        0& -m_{67}\\
        -m_{28}& -m_{68}\\
    \end{bmatrix}
    \begin{bmatrix}
      \alpha_2\\
      \beta_2\\
    \end{bmatrix},\\
    &\begin{bmatrix}
        k_3\\
        k_4\\
    \end{bmatrix}
    =\begin{bmatrix}
        m_{47}&m_{57}\\
        m_{48}&m_{58}\\
    \end{bmatrix}
    \begin{bmatrix}
        \alpha_3\\
        \beta_3\\
    \end{bmatrix},
\end{aligned}
\end{equation}
we have:
\begin{equation}
\begin{cases}
    \label{eq:parameters_Si}
    R_1 \coloneqq \frac{2}{5}C_1 m_{11} - k_0, \qquad 
    R_2 \coloneqq \alpha_1(C_2 - m_{46}) + \beta_1(C_3 - m_{56}), \\
    R_3 \coloneqq \frac{2}{5}C_1 m_{12}, \qquad 
    R_4 \coloneqq \frac{C_6 m_{31}}{2} - k_5, \\
    S_1 \coloneqq \tilde{\chi} \left( \alpha_1(m_{42} + C_2 m_{32}) + \beta_1(m_{52} + C_3 m_{32}) \right), \\
    S_2 \coloneqq \tilde{\chi}\left(\alpha_3(C_2 m_{31} - m_{41}) + \beta_3(C_3 m_{31} - m_{51})\right), \\
    S_3 \coloneqq \tilde{\chi} \left( \alpha_2(m_{22} + C_4 m_{12}) + \beta_2(m_{62} + C_7 m_{12}) \right), \\
    S_4 \coloneqq \frac{3}{2}C_5\tilde{\chi}(m_{21} - C_4 m_{11}), \qquad 
    S_5 \coloneqq \frac{2 C_1}{5 \tilde{\chi}}, \\
    S_6 \coloneqq k_9\tilde{\chi}m_{71}, \qquad 
    S_7 \coloneqq \frac{C_6}{2\tilde{\chi}}, \qquad 
    S_8 \coloneqq 2k_9\tilde{\chi}m_{81}, \\
    T_1 \coloneqq \tilde{\chi}\left(\alpha_1(C_2 m_{31} - m_{41}) + \beta_1(C_3 m_{31} - m_{51})\right),\quad T_2 \coloneqq -\frac{3}{2}C_5\tilde{\chi}(m_{22}+C_4 m_{12}).\\
\end{cases}
\end{equation}


\bibliographystyle{siamplain}
\bibliography{references}